\pgfplotsset{compat=1.16}
\def\dif{\partial}
\def\al{\alpha}
\def\be{\beta}
\def\ga{\gamma}
\def\op#1{{\rm op}(#1)}
\def\sieq{\overset{\Sigma'}{=}}
\def\sieqb{\overset{\Sigma''}{=}}
\def\lr#1{\langle{#1}\rangle}
\def\xig{\langle{\xi}\rangle_{\!\gamma}}
\def\R{{\mathbb R}}
\def\C{{\mathbb C}}
\def\N{{\mathbb N}}
\def\de{\delta}
\def\La{\Lambda}
\def\la{\lambda}
\def\ep{\epsilon}
\def\bro{\bar\rho}
\def\R{\mathbb{R}}
\def\C{\mathbb{C}}
\def\N{\mathbb{N}}
\def\bro{\bar\rho}
 \def\1{\mathbbm{1}}
\newcommand{\vertiii}[1]{{\left\vert\kern-0.25ex\left\vert\kern-0.25ex\left\vert #1 
\right\vert\kern-0.25ex\right\vert\kern-0.25ex\right\vert}}
\numberwithin{equation}{section}
\newtheorem{definition}{Definition}[section]
\newtheorem{remark}{Remark}[section]
\newtheorem{lem}{Lemma}[section]
\newtheorem{prop}{Proposition}[section]
\newtheorem{cor}{Corollary}[section]
\title{Geometric results for hyperbolic operators with spectral transition of the Hamilton map }
\author{ Enrico Bernardi \thanks{Dipartimento di Scienze Statistiche Paolo Fortunati, Universit\`a di Bologna, Bologna, Italy. \textbf{e-mail}: enrico.bernardi@unibo.it} \and
Tatsuo Nishitani\thanks{Department of Mathematics, Graduate School of Science, Osaka University,
  Toyonaka, Osaka, Japan.  \textbf{e-mail}:nishitani@math.sci.osaka-u.ac.jp}
}
\date{} 
\begin{document}
\maketitle
\bigskip
	
\begin{abstract}
In this paper we study a class of non-effectively hyperbolic operators vanishing of
order $ 2 $ on a manifold, on a sub-region of  which the spectral structure of the
Hamilton map changes type. Suitable normal symplectic coordinates are found
together with an analysis of the Hamilton system associated to the
principal symbol and a  factorization result, preparing the operator for a microlocal energy
estimate, is finally proven. 
\end{abstract}
	
Key words and phrases:~~Cauchy problem, Hamilton map and flow, Transition case, Non-effectively hyperbolic operator . \\
	
AMS 2020 classification:Primary: 35L15; Secondary: 35B30. .
	
\allowdisplaybreaks
		
\section{Introduction}\label{sec:intro}

Two seminal papers in the 1970s (\cite{MR427843}), (\cite{MR492751}) devoted to the analysis of
the Cauchy problem for linear hyperbolic operators with multiple
characteristics have been instrumental in highlighting the fundamental
features that one needs to study when one wants to prove
well-posedness results, or for that matter, propagation of
singularities results, for those kind of equations.
Noting that in  what follows the multiplicities of the characteristics
roots of the operator will
be at most $ 2 $, we recall that
these properties are related to the symplectic
geometry of the manifold of double points $ \Sigma $, to the spectral nature
of the fundamental matrix (or Hamilton map), attached to the operator at those double
points and to the behavior of the lower order terms.

More precisely let
\begin{equation}
  \label{eq:1}
  P(x,D) = -D_{0}^{2} + A_{1}(x,D')D_{0} + A_{2}(x,D')
\end{equation}

be a differential operator of order $ 2 $ in $ D_{0} $ with
coefficients $ A_{j}(x,D') $ that are classical pseudodifferential
operators of order $ j $ on $ \R^{n} $, $
x=(x_{0},x')=(x_{0},x_{1},\ldots,x_{n}) $ and the $ A_{j}(x,D') $
depend smoothly on $ x_{0} $. We have $ P(x_{0},x',\xi_{0},\xi') =
p(x,\xi) + P_{1}(x,\xi) + P_{0}(x,\xi) $ with $ p $ the principal
symbol of $ P(x,D) $ and $ P_{j} $, $ j=1,0 $ the symbols of the lower
order terms. We will assume throughout the paper that the symbol $ p $
is hyperbolic w.r.t the $ (x=0,\xi=e_{0}) $ direction, that
\begin{equation}
\label{eq:2ji}
\text{$p$ vanishes exactly to second order on $\Sigma$.}
\end{equation}
and that

\begin{equation}
\label{eq:rank}
{\rm rank\,}\sigma\big|_{\Sigma}={\rm constant}
\end{equation}
where $\sigma=\sum_{j=0}^nd\xi_j\wedge d x_j$ is the symplectic $2$-form. 

Then it is well known that the fundamental matrix $ F_{p}(\rho) =
dH_{p}(\rho) $ at $ \rho \in \Sigma $ can either have two real
eigenvalues, a case known as effectively hyperbolic, completely solved
and understood, see e.g. (\cite{MR4718636}) for a very recent review and
simplification of the proofs in that case,  or else its
spectrum is purely imaginary, i.e.  ${\rm Spec}F_p(\rho)\subset i\R$,
$ \rho\in\Sigma$. When this occurs it is also known that only two possible sub-cases may happen,
see below Lemma (\ref{lem:seni:a}). 

When the type of either one of these two possible sub-cases is fixed on $ \Sigma $, then essentially our
knowledge is almost complete: a very thorough and exhaustive review of the state of the art
can be found, together with some still open questions, in
(\cite{MR3726883}) to which we also refer for some of the notation used in
the following.
However in (\cite{MR2438425}) it was noted that, beyond the
aforementioned conditions on the spectrum of the fundamental matrix,
another ingredient had to be taken into account when establishing the
well posedness of the Cauchy problem for the class of non-effectively operators, namely the behavior of the
Hamilton flow of the simple bicharacteristic curves with respect to
the manifold of double points. The existence of cases when this flow
may land tangentially onto the double manifold, prevents one from getting
the $ C^{\infty} $ well posedness results one should expect, even with
the lower order terms satisfying the conditions outlined in (\cite{MR492751}).
It is indeed possible to show that one cannot prove  well posedness beyond  a certain
Gevrey threshold, very likely related to the geometry of those curves.

While the picture when the type of spectral behavior of the fundamental
matrix is fixed has been therefore essentially almost fully understood, there have
been a number of more recent results dealing with cases when one allows the nature of the spectrum to vary
along the double manifold, see for instance
(\cite{MR1464121}),(\cite{MR2942985}),(\cite{MR1848348}) and especially
(\cite{MR3185898}),(\cite{MR3362026}),(\cite{MR3784872}) where more
general types of transitions were considered, albeit when $
\Sigma $ has a low codimension.

Consequently the purpose of this paper is to continue the investigation of these
transition situations in the case of a general $ C^{\infty} $ double
manifold of any codimension, under the assumption that at every double point the spectrum of
the fundamental matrix is purely imaginary and the transition occurs
where the cardinality of Jordan
blocks in its elementary decomposition varies from $ 2 $ to $ 4 $
crossing a sub-manifold (see Lemma (\ref{lem:seni:a})). Then we  arrange a set-up where the study of
the principal symbol
will yield information on the behavior of the Hamilton flow and on the
possibility of an elementary factorization conducive to weighted
energy estimates in a suitable operator calculus. In particular we improve the
results in  (\cite{MR2942985}) to get a very general normal form under
this specific type of transition and we arrange a system of coordinates
where the function giving rise to the transition region is exhibited
explicitly.

The paper is organized as follows: in Section (\ref{nf1}) we characterize precisely the type of transition under examination
and in Proposition (\ref{pro:form:1}) we exhibit a set of normal
coordinates, where a function $ \theta $ responsible for the change of
type is found and computed. In Section (\ref{exlem}) we
further analyze this function $ \theta $ with an eye toward its future
use in implementing weighted energy estimates, as well as in investigating the behavior of bicharacteristics of the principal symbol.
In Section (\ref{tanbich}) we prove under a number of symplectically invariant
conditions on the coordinate functions found in Section (\ref{nf1})
that there exists a bicharacteristic of the principal symbol tangent
to the double manifold. This clearly is important in light of the
aforementioned lack of well-posedness in the $ C^{\infty} $ category
when the Hamilton flow is attracted to the double manifold.
Finally in Section (\ref{elfac}) we recover
the general conditions allowing the principal symbol to be factorized
as in \cite{zbMATH03625907}, a preliminary step toward the setting up
of a coherent system of weighted energy estimates.


\section{Normal form}
\label{nf1}
\subsection{Some notations}
\label{ssnot}
Assume that the set $\Sigma=\{p=0, \nabla p=0\}$ of critical points of
$p=0$ is a smooth manifold of codimension $d+1$ and that
(\ref{eq:2ji})and (\ref{eq:rank}) hold.

From (\ref{eq:2ji}) without restrictions, we may assume that for any $\rho\in\Sigma$ there is a neighborhood of $\rho$ where  we can write 
\[
p=-\xi_0^2+\Sigma_{j=1}^d\phi_j^2.
\]
Here $d\phi_j$ ($\phi_0=\xi_0$) are linearly independent at $\rho$ and $\Sigma$ is given by $\phi_j=0$, $0\leq j\leq d$. 
In \cite{MR3726883} we find detailed discussions on the Cauchy problem for ${\rm op}(p)$ under the assumptions \eqref{eq:2ji}, \eqref{eq:rank} and $
{\rm Spec}F_p(\rho)\subset i\R$, $ \rho\in\Sigma$ 
assuming further that {\it there is no spectral transition of the
  Hamilton map $F_p$}, that is

\medskip
\noindent
For any $\rho\in \Sigma$ there is a conic neighborhood $V$ of $\rho$ such that either ${\rm Ker\,}F_p^2\cap{\rm Im\,}F_p^2=\{0\}$ or ${\rm Ker\,}F_p^2\cap{\rm Im\,}F_p^2\neq \{0\}$ holds throughout $V\cap\Sigma$.

\medskip
\noindent
Our main concern in this section is to derive a normal form of $p$ under the assumptions \eqref{eq:2ji} and \eqref{eq:rank} in the presence of a spectral transition of the Hamilton map (spectral transition, for short). Let us denote
\[
W(\rho)={\rm Ker}F_p^2(\rho)\cap{\rm Im}F_p^2(\rho),\quad \rho\in \Sigma.
\]
We sometimes denote $\xi_0=\phi_0$. Since
\[
{\rm dim}T_{\rho}\Sigma+{\rm rank}\,\sigma|_{(T_{\rho}\Sigma)^{\sigma}}={\rm dim}(T_{\rho}\Sigma)^{\sigma}+{\rm rank}\,\sigma|_{T_{\rho}\Sigma}
\]
and ${\rm rank}\,\sigma|_{(T_{\rho}\Sigma)^{\sigma}}={\rm rank}(\{\phi_i,\phi_j\}(\rho))$ we see that \eqref{eq:rank} is equivalent to
\begin{equation}
\label{eq:kaisu}
{\rm rank}\big(\{\phi_i,\phi_j\}\big)={\rm constant}=r\, (=2d+{\rm rank}\,\sigma|_{T_{\rho}\Sigma}-2n)
\end{equation}
where $\big(\{\phi_i,\phi_j\}\big)$ denotes the $(d+1)\times(d+1)$ matrix with $(i,j)$-th entry $\{\phi_{i-1},\phi_{j-1}\}$. 

\begin{lem}
\label{lem:seni:a}Assume  \eqref{eq:rank}. We have $W(\bro)\neq \{0\}$ if the spectral transition occurs at $\bro$.
\end{lem}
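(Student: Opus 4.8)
The plan is to establish that $\rho\mapsto{\rm dim}\,W(\rho)$ is upper semicontinuous on $\Sigma$ in a neighbourhood of $\bro$. Once this is known the assertion follows at once: a spectral transition at $\bro$ means, in particular, that every conic neighbourhood of $\bro$ contains points of $\Sigma$ at which $W\neq\{0\}$, so choosing such points converging to $\bro$ forces ${\rm dim}\,W(\bro)\ge 1$.

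The substantive step is to prove that ${\rm rank}\,F_p^2(\rho)$ is constant on $\Sigma$, equal to $r$. Put $\epsilon_0=-1$, $\epsilon_1=\cdots=\epsilon_d=1$, $\phi_0=\xi_0$, so that $p=\sum_{j=0}^{d}\epsilon_j\phi_j^2$ and $H_p=2\sum_{j=0}^{d}\epsilon_j\phi_jH_{\phi_j}$. Differentiating $H_p$ at $\rho\in\Sigma$, where every $\phi_j$ vanishes, the terms carrying a factor $\phi_j$ drop out, and one obtains (up to harmless nonzero constants)
\[
F_p(\rho)u=\sum_{j=0}^{d}\epsilon_j\langle d\phi_j(\rho),u\rangle\,H_{\phi_j}(\rho),\qquad
F_p^2(\rho)u=\sum_{j,k=0}^{d}\epsilon_j\epsilon_k\{\phi_k,\phi_j\}(\rho)\,\langle d\phi_k(\rho),u\rangle\,H_{\phi_j}(\rho).
\]
The covectors $d\phi_j(\rho)$ are linearly independent, and so are the vectors $H_{\phi_j}(\rho)$, because $\sum_jc_jH_{\phi_j}=H_{\sum_jc_j\phi_j}$ vanishes at $\rho$ only if $\sum_jc_j\,d\phi_j(\rho)=0$. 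Hence $u\mapsto(\langle d\phi_k(\rho),u\rangle)_{k}$ is onto $\R^{d+1}$ while $(w_j)\mapsto\sum_jw_jH_{\phi_j}(\rho)$ is injective, so ${\rm rank}\,F_p^2(\rho)$ equals the rank of the $(d+1)\times(d+1)$ matrix $\big(\epsilon_j\epsilon_k\{\phi_k,\phi_j\}(\rho)\big)$, that is ${\rm rank}\big(\{\phi_i,\phi_j\}\big)=r$ by \eqref{eq:kaisu}. Consequently ${\rm dim}\,{\rm Ker}\,F_p^2(\rho)=2(n+1)-r$ and ${\rm dim}\,{\rm Im}\,F_p^2(\rho)=r$ are constant, and both subspaces depend continuously on $\rho\in\Sigma$.

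Granting that, I would obtain the semicontinuity as follows. Since the two dimensions above are constant, ${\rm dim}\,W(\rho)=2(n+1)-{\rm dim}\big({\rm Ker}\,F_p^2(\rho)+{\rm Im}\,F_p^2(\rho)\big)$, so it suffices to bound the last dimension from below near $\bro$. Pick $v_1,\dots,v_a\in{\rm Ker}\,F_p^2(\bro)$ and $w_1,\dots,w_b\in{\rm Im}\,F_p^2(\bro)$ forming a basis of ${\rm Ker}\,F_p^2(\bro)+{\rm Im}\,F_p^2(\bro)$. By the constant-dimension and continuity properties just proved, these extend to continuous families $v_i(\rho)\in{\rm Ker}\,F_p^2(\rho)$, $w_l(\rho)\in{\rm Im}\,F_p^2(\rho)$ for $\rho$ near $\bro$; linear independence being an open condition, $v_1(\rho),\dots,v_a(\rho),w_1(\rho),\dots,w_b(\rho)$ remain independent on a neighbourhood $U$ of $\bro$, so ${\rm dim}\big({\rm Ker}\,F_p^2(\rho)+{\rm Im}\,F_p^2(\rho)\big)\ge a+b$ there, i.e. ${\rm dim}\,W(\rho)\le{\rm dim}\,W(\bro)$ on $U$. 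The spectral transition then yields a point $\rho\in U\cap\Sigma$ with $W(\rho)\neq\{0\}$, whence $1\le{\rm dim}\,W(\rho)\le{\rm dim}\,W(\bro)$ and $W(\bro)\neq\{0\}$.

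The only point requiring genuine work is the middle step, i.e. the identification ${\rm rank}\,F_p^2={\rm rank}\big(\{\phi_i,\phi_j\}\big)$ and hence, via \eqref{eq:kaisu}, the constancy of the ranks of $F_p^2$, ${\rm Ker}\,F_p^2$ and ${\rm Im}\,F_p^2$; the semicontinuity of ${\rm dim}\,W$ and the conclusion are then purely formal. Alternatively, the last paragraph can be recast as a compactness argument: for $\rho_\nu\to\bro$ in $\Sigma$ with unit vectors $u_\nu\in W(\rho_\nu)$, a subsequential limit $u$ satisfies $F_p^2(\bro)u=0$ by continuity of $F_p^2$, and $u\in{\rm Im}\,F_p^2(\bro)$ because the least nonzero singular value of $F_p^2(\rho)$ stays bounded below near $\bro$ by constancy of the rank; then $0\neq u\in W(\bro)$.
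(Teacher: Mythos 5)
Your proof is correct, and it takes a genuinely different route from the paper. The paper argues by contraposition and leans on the Ivrii--Petkov/H\"ormander classification of the fundamental matrix at a doubly characteristic point: if $W(\bro)=\{0\}$, the symplectic normal form gives $F_p(\bro)$ exactly $r$ nonzero eigenvalues; continuity of eigenvalues keeps at least $r$ of them nonzero nearby, while the normal form at any nearby point with $W(\rho)\neq\{0\}$ (which, using \eqref{eq:kaisu}, forces $l_1'=l_1$) would leave only $r-2$ nonzero eigenvalues, a contradiction. You bypass the classification theorem entirely: from $F_p(\rho)u=2\sum_j\epsilon_j\langle d\phi_j(\rho),u\rangle H_{\phi_j}(\rho)$ on $\Sigma$, you factor $F_p^2(\rho)$ as a surjection followed by the commutator matrix $(\{\phi_i,\phi_j\}(\rho))$ followed by an injection, so that ${\rm rank}\,F_p^2(\rho)={\rm rank}(\{\phi_i,\phi_j\}(\rho))=r$ is constant by \eqref{eq:kaisu}; upper semicontinuity of $\dim\big({\rm Ker}\,F_p^2\cap{\rm Im}\,F_p^2\big)$ for a continuous constant-rank family is then routine, and the transition hypothesis supplies a sequence $\rho_\nu\to\bro$ in $\Sigma$ with $W(\rho_\nu)\neq\{0\}$, forcing $W(\bro)\neq\{0\}$. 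Your argument is more elementary and self-contained, isolating exactly what the rank hypothesis \eqref{eq:rank} buys (constancy of ${\rm rank}\,F_p^2$ on $\Sigma$); the paper's argument is shorter because it reuses the symplectic normal forms that are needed elsewhere anyway, and those normal forms also encode the finer Jordan-block information that your rank computation does not see.
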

\begin{proof}Assuming $W(\bro)=\{0\}$ we show $W(\rho)=\{0\}$ near $\bro$ so that there is no
spectral transition. Thanks to \cite[Theorem 1.4.6]{MR492751} (or \cite[Theorem 21.5.3]{MR781536}) one can choose a symplectic basis such that
\begin{gather*}
\text{either}\;\;\;p_{\bro}=-\xi_0^2+\Sigma_{j=1}^{l_1}\mu_j(x_j^2+\xi_j^2)+\Sigma_{j=l_1+1}^{l_2}\xi_j^2,\quad \mu_j>0,\\
\text{or}\;\; p_{\bro}=e(x_0^2-\xi_0^2)+\Sigma_{j=1}^{l_1-1}\mu_j(x_j^2+\xi_j^2)+\Sigma_{j=l_1+1}^{l_2}\xi_j^2,\quad \mu_j>0,\;\;e>0
\end{gather*}
where $\pm e$, $\pm i\mu_j$ are non-zero eigenvalues of $F_p(\bro)$ and $2l_1=r$ for \eqref{eq:kaisu}. From the continuity of the eigenvalues of $F_p(\rho)$ with respect to $\rho\in\Sigma$, $F_p(\rho)$ has at least $r$ non-zero eigenvalues near $\bro$ (with counting multiplicity). If $W(\rho)\neq \{0\}$ thanks to \cite[Theorem 1.4.6]{MR492751} (or \cite[Theorem 21.5.3]{MR781536}) in a suitable symplectic basis one can write
\[
p_{\rho}=-\xi_0^2+2\xi_0\xi_1+x_1^2+\sum_{j=2}^{l'_1}\mu'_j(x_j^2+\xi_j^2)+\sum_{j=l'_1+1}^{l'_2}\xi_j^2,\quad \mu'_j>0
\]
where $l_1'=l_1$ because of \eqref{eq:kaisu} so that $F_p(\rho)$ has $r-2$ non-zero eigenvalues (with counting multiplicity) which is a contradiction proving $W(\rho)=\{0\}$ near $\bro$.
\end{proof}
\medskip
\noindent
In what follows we always work near $\bro$ but do not mention this and it should be understood that $F_p$ is defined only on $\Sigma$ near $\bro$ .
\subsection{A normal form }
\label{sec:form}

Denote 
\[
\Sigma'=\{\phi_j=0, 1\leq j\leq d\}
\]
and by $O^k(\Sigma')$ a smooth $f(x,\xi')$ vanishing of order $k$ on $\Sigma'$ near $\bro$ and write $O(\Sigma')$ for $O^1(\Sigma')$. We shall write $f_1\sieq f_2$ to mean $f_1-f_2=O(\Sigma')$.
\begin{prop}
\label{pro:form:1}Assume \eqref{eq:2ji}, \eqref{eq:rank} and that the spectral transition occurs at $\bro$ then one can write 
\begin{equation}
\label{eq:form:0}
p=-(\xi_0+\phi_1)(\xi_0-\phi_1)+\theta\phi_1^2+\Sigma_{j=2}^r\phi_j^2+\Sigma_{j=r+1}^d\phi_j^2
\end{equation}
with a smooth $\theta$ $($positively homogeneous of degree $0$$)$ such that $1+\theta> 0$ where
\begin{equation}
\label{eq:form:1}
\{\phi_i,\phi_j\}\sieq 0,\quad 0\leq i\leq d,\;\;j\geq r+1,\quad \{\xi_0-\phi_1,\phi_j\}\sieq 0,\quad 0\leq j\leq d
\end{equation}
and
\begin{equation}
\label{eq:form:2}
\{\phi_1,\phi_2\}(\bro)\neq 0,\; \{\phi_2,\phi_j\}\sieq 0, \; 3\leq j\leq r,\; {\rm det}(\{\phi_i,\phi_j\}(\bro))_{3\leq i,j\leq r}\neq 0.
\end{equation}
If $p$ has the form  \eqref{eq:form:0} with \eqref{eq:form:1} and \eqref{eq:form:2} then $\theta$ is equal to the product of nonzero eigenvalues of $F_p(\rho)$ $($with counting multiplicity$)$ multiplied by a positive smooth function on $\Sigma'$ whenever $W(\rho)=\{0\}$. Moreover 
$\theta(\rho)=0$ if and only if $W(\rho)\neq \{0\}$ and that
\begin{equation}
\label{eq:te:seifu}
\begin{split}
\theta(\rho)>0&\Longleftrightarrow \text{$p$ is non-effectively hyperbolic at $\rho$ with $W(\rho)=\{0\}$},\\
\theta(\rho)=0&\Longleftrightarrow \text{$p$ is non-effectively hyperbolic at $\rho$ with $W(\rho)\neq \{0\}$},\\
\theta(\rho)<0&\Longleftrightarrow  \text{$p$ is effectively hyperbolic at $\rho$.}
\end{split}
\end{equation}
\end{prop}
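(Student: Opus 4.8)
The plan is to work entirely at a fixed point $\rho\in\Sigma'$ and compute the characteristic polynomial of $F_p(\rho)$ from the quadratic form $p_\rho$ in the symplectic coordinates provided by \eqref{eq:form:0}--\eqref{eq:form:2}. First I would observe that on $\Sigma'$ the linearization of $p$ has Hamilton map $F_p(\rho)$ built out of the Poisson bracket matrix $\big(\{\phi_i,\phi_j\}(\rho)\big)$ together with the ``doubling'' contribution of the factor $\theta\phi_1^2$ and the null directions $\phi_{r+1},\dots,\phi_d$ (which by \eqref{eq:form:1} Poisson-commute with everything modulo $O(\Sigma')$, hence contribute only zero eigenvalues). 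The conditions \eqref{eq:form:1} say that $\xi_0-\phi_1$ is (mod $O(\Sigma')$) in the radical, so the ``$-(\xi_0+\phi_1)(\xi_0-\phi_1)$'' part together with $\theta\phi_1^2$ organizes into the familiar non-effectively-hyperbolic $2\times 2$ block $2\xi_0\xi_1+\theta\xi_1^2$-type piece whose Hamilton map is nilpotent when $\theta=0$ and has one pair of real eigenvalues $\pm\sqrt{-\theta}\cdot c$ when $\theta<0$; the remaining coordinates $\phi_2,\dots,\phi_r$ form, by \eqref{eq:form:2}, a symplectically nondegenerate elliptic-type block contributing the nonzero imaginary eigenvalues $\pm i\mu_j$.

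Concretely, I would diagonalize/normalize the bracket matrix: \eqref{eq:form:2} guarantees that $\{\phi_1,\phi_2\}(\bro)\neq 0$ pairs $\phi_1$ with $\phi_2$, that $\phi_2$ commutes with $\phi_3,\dots,\phi_r$, and that $(\{\phi_i,\phi_j\}(\bro))_{3\le i,j\le r}$ is invertible, so after a linear symplectic change one may bring $\sum_{j=2}^r\phi_j^2$ to $\xi_1^2+\sum_{j=2}^{l_1}\mu_j^2(x_j^2+\xi_j^2)$ form (the precise positive factors being absorbed, which is where the ``positive smooth function on $\Sigma'$'' comes from), while the $-(\xi_0+\phi_1)(\xi_0-\phi_1)+\theta\phi_1^2$ part, using that $\xi_0-\phi_1$ is in the radical, becomes $-\xi_0^2+2\xi_0\xi_1+\theta\xi_1^2$ up to lower-order/$O(\Sigma')$ terms. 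Then a direct computation of $\det(F_p(\rho)-\lambda I)$ factors as $\lambda^{2(d-r+\text{something})}\cdot\big(\lambda^2+\theta\cdot(\text{positive})\big)\cdot\prod_{j}(\lambda^2+\mu_j^2)$, up to the positive normalizing factor; reading off the product of nonzero eigenvalues gives it equal to $\theta$ times $\prod\mu_j^2$ times that positive factor, proving the first assertion whenever $W(\rho)=\{0\}$ (so that the $\lambda^2+\theta(\cdots)$ block indeed contributes two nonzero eigenvalues).

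The equivalences in \eqref{eq:te:seifu} then follow by sign analysis together with Lemma \ref{lem:seni:a} and the standard dichotomy \cite[Theorem 1.4.6]{MR492751} recalled in the proof of that lemma. If $\theta(\rho)<0$ the block $\lambda^2+\theta(\cdots)$ supplies a pair of real nonzero eigenvalues $\pm e$, hence $\rho$ is effectively hyperbolic; conversely effective hyperbolicity forces a real pair, which by the normal form can only come from that block, forcing $\theta(\rho)<0$. If $\theta(\rho)>0$ all nonzero eigenvalues are purely imaginary and $W(\rho)=\{0\}$ by the product formula (the product of nonzero eigenvalues is $\neq 0$, so $F_p$ has full rank $r$ on the nonzero part, i.e. no extra Jordan coupling). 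The remaining case $\theta(\rho)=0$ is the residual one: by the trichotomy $\rho$ is non-effectively hyperbolic, and since it is not the $W=\{0\}$ case (else $\theta(\rho)\neq 0$ by the product formula applied with $W(\rho)=\{0\}$) we get $W(\rho)\neq\{0\}$; conversely if $W(\rho)\neq\{0\}$ then by the normal form $p_\rho$ has the Jordan-block form $-\xi_0^2+2\xi_0\xi_1+x_1^2+\cdots$ appearing in the proof of Lemma \ref{lem:seni:a}, which is exactly the $\theta=0$ specialization, so $\theta(\rho)=0$.

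I expect the main obstacle to be bookkeeping the $O(\Sigma')$ error terms and the effect of the radical directions $\xi_0-\phi_1$ and $\phi_{r+1},\dots,\phi_d$ precisely enough to justify that they contribute only zero eigenvalues and do not interfere with the nonzero spectrum — i.e. showing the characteristic polynomial of $F_p(\rho)$ genuinely factors as claimed rather than only up to terms one has to argue away. Equivalently, one must verify that the linear symplectic reduction bringing $(\{\phi_i,\phi_j\}(\rho))$ to the block normal form can be carried out with smooth dependence on $\rho\in\Sigma'$ and with all discarded factors smooth and strictly positive, so that the phrase ``multiplied by a positive smooth function on $\Sigma'$'' is honest; this is a linear-algebra-with-parameters argument that is routine in spirit but delicate to write cleanly, and is the step I would allocate most care to.
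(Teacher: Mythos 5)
Your proposal addresses only the second half of the Proposition. The statement has two distinct claims: (i) under \eqref{eq:2ji}, \eqref{eq:rank}, and the spectral transition hypothesis, the principal symbol \emph{can be brought} to the form \eqref{eq:form:0} with \eqref{eq:form:1} and \eqref{eq:form:2}; and (ii) given such a form, $\theta$ encodes the product of nonzero eigenvalues of $F_p$ and the trichotomy \eqref{eq:te:seifu} holds. You explicitly start ``in the symplectic coordinates provided by \eqref{eq:form:0}--\eqref{eq:form:2},'' i.e. you take (i) for granted and only argue (ii). But (i) is the substance of the paper's proof: one must show that $\ker A$ (with $A=(\{\phi_i,\phi_j\})_{0\le i,j\le d}$) admits a smooth basis on $\Sigma'$ whose first element has nonzero first component (which requires the propagation-cone argument $C_{\bro}\cap T_{\bro}\Sigma\ne\{0\}$ to rule out effective hyperbolicity at $\bro$), then orthonormalize, prove $A_1=(\{\phi_i,\phi_j\})_{1\le i,j\le r}$ is nonsingular, define $\alpha$ via $A_1\alpha=a'$ and the auxiliary function $\psi=-\sum\alpha_j\phi_j$, and finally perform two more orthogonal normalizations among $\phi_2,\dots,\phi_r$ to secure $\{\phi_2,\phi_j\}\sieq 0$ and $\det(\{\phi_i,\phi_j\})_{3\le i,j\le r}\ne 0$. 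None of this appears in your proposal, so there is a genuine gap independent of how well part (ii) is handled.

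For part (ii) your plan is in the right spirit but a degree more heuristic than the paper's Lemma~\ref{lem:BPP}. You want to bring $p_\rho$ to a block normal form and factor $\det(F_p(\rho)-\lambda I)$, reading off the product of nonzero eigenvalues directly. The paper instead rescales $\phi_1\mapsto(1+\theta)^{1/2}\phi_1$ to get back to $p=-\xi_0^2+\sum\phi_j^2$, restricts $F_p$ to $\mathrm{Im}\,F_p=\mathrm{span}\,\{H_{\phi_j}\}$, computes the characteristic polynomial of the resulting matrix $A'$, and establishes the key identity $\det(\mu+A')=\mu G(\mu)$, $G(0)=(1-|\alpha|^2)\det M$. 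This is cleaner than your full-space determinant and bypasses exactly the ``bookkeeping of radical directions'' you flag as the main obstacle. More importantly, your step ``if $W(\rho)\neq\{0\}$ then $p_\rho$ has the Jordan form, which is exactly the $\theta=0$ specialization'' is not a proof: the Jordan normal form of $F_p(\rho)$ lives in some other symplectic frame, and you still must relate it to the given $\phi_j$-frame. The paper closes this by showing $|\alpha(\rho)|=1\Leftrightarrow W(\rho)\ne\{0\}$ via a lifting argument (a non-semisimple nonzero eigenvalue of $A'$ produces a non-semisimple nonzero eigenvalue of $F_p$), and then identifies $\theta=(1-|\alpha|^2)/|\alpha|^2$ through the explicit formula $\alpha={}^t(-1/\delta,0,\dots,0)$ coming from $\dim\ker A'=1$ — which also yields $1+\theta=1/|\alpha|^2>0$, a claim you do not address. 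So even granting the normal form, part (ii) as written still has the $W\ne\{0\}\Leftrightarrow\theta=0$ equivalence and the positivity of $1+\theta$ as unfilled gaps.
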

We say that $p$ is of normal form if $p$ is written as \eqref{eq:form:0} with $\phi_j$ satisfying \eqref{eq:form:1} and \eqref{eq:form:2}. When $p$ is noneffectively hyperbolic at $\rho$ we say noneffectively hyperbolic of type 1 or of type 2 according to $W(\rho)=\{0\}$ or $W(\rho)\neq \{0\}$. This normal form, especially $\theta$ is not unique. To see this we first note that
\begin{lem}
\label{lem:kakikae}One can rewrite
\[
-(\xi_0+\phi_1)(\xi_0-\phi_1)+\theta\phi_1^2=-(\xi_0+\tilde\phi_1)(\xi_0-\tilde\phi_1)+\hat\theta\tilde\phi_1^2
\]
with $
\tilde\phi=(1+\nu)\phi_1$ and $ \hat\theta=(\theta-\nu^2-2\nu)/(1+\nu)^2$.
\end{lem}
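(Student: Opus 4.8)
The plan is to prove this by a direct algebraic computation, viewing both sides as polynomials in $\xi_0$ whose coefficients are smooth functions of $(x,\xi')$.

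First I would expand the left-hand side. Since $(\xi_0+\phi_1)(\xi_0-\phi_1)=\xi_0^2-\phi_1^2$, it becomes
\[
-(\xi_0+\phi_1)(\xi_0-\phi_1)+\theta\phi_1^2=-\xi_0^2+(1+\theta)\phi_1^2 .
\]
Next, substituting $\tilde\phi_1=(1+\nu)\phi_1$ into the right-hand side and using the same factorization gives
\[
-(\xi_0+\tilde\phi_1)(\xi_0-\tilde\phi_1)+\hat\theta\,\tilde\phi_1^2=-\xi_0^2+(1+\hat\theta)(1+\nu)^2\phi_1^2 .
\]
Hence the two expressions coincide precisely when the coefficients of $\phi_1^2$ agree, that is, when $1+\theta=(1+\hat\theta)(1+\nu)^2$; solving this linear relation for $\hat\theta$ yields
\[
\hat\theta=\frac{1+\theta}{(1+\nu)^2}-1=\frac{\theta-\nu^2-2\nu}{(1+\nu)^2},
\]
which is exactly the stated formula. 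This already establishes the lemma.

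A few remarks close the argument. One needs $1+\nu\neq 0$ for the division to make sense, which holds if $\nu$ is taken to be a smooth function, positively homogeneous of degree $0$, with $1+\nu>0$ (so that $\hat\theta$ is again homogeneous of degree $0$); under the same hypothesis the identity $1+\hat\theta=(1+\theta)/(1+\nu)^2$ shows that the positivity constraint $1+\theta>0$ of Proposition \ref{pro:form:1} is automatically inherited by $\hat\theta$. The content of the lemma is that the substitution $\phi_1\mapsto(1+\nu)\phi_1$ leaves the first factor $-(\xi_0+\phi_1)(\xi_0-\phi_1)$ of \eqref{eq:form:0} intact as a product of two linear factors in $\xi_0$ while only altering $\theta$ via $\theta\mapsto(\theta-\nu^2-2\nu)/(1+\nu)^2$, so that $\theta$, and with it the normal form, is not unique. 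There is no real obstacle here: the whole argument is the one-line polynomial identity above, and the only care needed is in later tracking which choices of $\nu$ simultaneously preserve the Poisson-bracket conditions \eqref{eq:form:1}--\eqref{eq:form:2}.
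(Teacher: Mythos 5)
Your computation is correct and is precisely the one-line algebra the paper has in mind (the paper itself says only ``The proof is clear''). Nothing further is needed; the remarks about $1+\nu\neq 0$ and the inheritance of the positivity constraint are sensible housekeeping, consistent with how the lemma is used afterwards.
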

\begin{proof} The proof is clear.
\end{proof}
If $\nu\sieq 0$ in Lemma \ref{lem:kakikae}, it is clear that \eqref{eq:form:1} and \eqref{eq:form:2} hold replacing $\phi_1$ by $\tilde \phi_1$ such that
\[
p=-(\xi_0+\tilde\phi_1)(\xi_0-\tilde \phi_1)+\hat\theta\phi_1^2+\Sigma_{j=2}^r\phi_j^2+\Sigma_{j=r+1}^d\phi_j^2
\]
is also a normal form and that $\theta\sieq \hat\theta$. This is no
coincidence.

Let $\Pi\lambda_j(\rho)$ be the product of nonzero eigenvalues of $F_p(\rho)$ (with counting multiplicity). Note that $\Pi\lambda_j(\rho)$ is real because nonzero eigenvalues of $F_p(\rho)$ are $\pm i\mu$ and possibly $\pm \lambda$ with real $\mu, \lambda$. From Proposition \ref{pro:form:1} there is a smooth $f(\rho)$ near $\bro$ with $f(\bro)>0$ such that 
\begin{equation}
\label{eq:to:full}
\theta(\rho)=f(\rho)\Pi \lambda_j(\rho),\quad W(\rho)=\{0\},\;\;\rho\in \Sigma.
\end{equation}
Since $\Pi \lambda_j(\rho)$ is invariant under changes of symplectic coordinates we could say that $\theta|_{\Sigma}$ is conformally invariant. It is also clear from \eqref{eq:to:full} that $\Pi \lambda_j(\rho)$ smoothly extends from the set where $W(\rho)=\{0\}$ to a neighborhood of $\bro$ on $\Sigma$ by setting $\Pi \lambda_j(\rho)=0$ if $W(\rho)\neq0$, the extension is given by $\theta(\rho)/f(\rho)$ through \eqref{eq:to:full}.
\begin{remark}
\label{rem:O:4}\rm
If $
\theta=\tilde\theta+\Sigma_{j=1}^dc_j\phi_j$
choosing $\nu$ such that  $2\nu=\Sigma_{j=1}^dc_j\phi_j$ in Lemma \ref{lem:kakikae} we have
\[
\hat\theta=\tilde\theta+r,\quad r=O^2(\Sigma').
\]
Therefore one can write
\[
-(\xi_0+\phi_1)(\xi_0-\phi_1)+\theta\phi_1^2=-(\xi_0+\tilde\phi_1)(\xi_0-\tilde\phi_1)+(\tilde\theta+O^2(\Sigma'))\tilde\phi_1^2.
\]
That is, if $\tilde\theta|_{\Sigma'}=\theta|_{\Sigma'}$ one can replace $\theta$ by $\tilde\theta+O^2(\Sigma')$  in the normal form.
\end{remark}
This proposition shows that transitions may occur already in very simple examples. For instance, consider
\[
p=-\xi_0^2+(1+\theta(x))\xi_1^2+(x_0+x_1)^2\xi_n^2,\quad |\theta(x)|<1,\;\;\theta(0)=0 
\]
near $(0, e_n)$, $e_n=(0,\ldots,0,1)$ with $n\geq 2$. It is clear that the doubly characteristic set is given by $
\Sigma=\{\xi_0=0,\xi_1=0, x_0+x_1=0\}$. Writing 
\begin{equation}
\label{eq:rei:1}
p=-(\xi_0+\phi_1)(\xi_0-\phi_1)+\theta(x)\phi_1^2+\phi_2^2,\quad \phi_1=\xi_1,\;\;\phi_2=(x_0+x_1)\xi_n
\end{equation}
it is also clear that \eqref{eq:form:1} and \eqref{eq:form:2} are satisfied. By choosing $\theta$ all possible types of transition can occur.

\medskip

\noindent
Proof of Proposition: 
 Denote $A=(\{\phi_i,\phi_j\})_{0\leq i,j\leq d}$ (note that $A$ is
 independent of $\xi_0$) where the rank of $A(\rho)$ is constant
 $r$. Then there exists a smooth basis $v_0, v_{r+1},\ldots, v_d$,
 defined on $\Sigma'$,  of ${\rm Ker}A$. One can assume $v_0={^t}(1,
 v_0')$. To check this it suffices to prove the first entry of some
 $v_0(\bro), v_{r+1}(\bro),\ldots, v_d(\bro)$ is different form
 $0$. Suppose the contrary. Let $ X\in C_{\bro}\cap T_{\bro}\Sigma$
 where $C_{\bro}$ is the propagation cone. One can write
 $X=\Sigma_{j=0}^d\al_jH_{\phi_j}(\bro)$ since $C_{\bro}\subset {\rm
   Im}F_p(\bro)$. From $X\in T_{\bro}\Sigma$ it follows $A\al=0$ hence
 $\al_0=0$ by assumption. Writing $X=(\bar x,\bar\xi)$ we have $\bar
 x_0=0$. Recall $C_{\bro}=\{X; \sigma(X,Y)\leq 0,\;\forall Y\in
 \Gamma_{\bro}\}$. Since for any $(y,\eta')$ there is $\eta_0$ such
 that $(y, \eta)\in \Gamma_{\bro}$ (because
 $\Gamma_{\bro}=\{\eta_0>(\sum_{j=1}^rd\phi_j(y,\eta')^2)^{1/2}\}$) we
 conclude $X=0$. This proves that $C_{\bro}\cap T_{\bro}\Sigma=\{0\}$
 hence $\bro$ is effectively hyperbolic (\cite[Corollary
 1.4.7]{MR492751}, \cite[Lemma 1.1.2]{MR1166190}) so that $W(\bro)=
 \{0\}$ contradicting the assumption.
 Since one can assume $v_0={^t}(1, v_0')$ considering $v_j-(\text{the first component of $v_{j}$})v_0$ one may assume
\[
v_{j}={^t}(0, v_j'(\rho)),\;\;v_j'(\rho)=(v'_{j1},\ldots,v'_{jd}),\;\;\rho\in\Sigma'\quad j=r+1,\ldots,d.
\]
We extend $v'_{r+1},\ldots, v'_d$ outside $\Sigma'$ and orthonormalize them and denote the resulting ones by $w_j'$. 
We choose smooth $w_1',\ldots, w_r'$ such that $w_1',\ldots, w_d'$ will be a smooth orthonormal basis of $\R^d$. Denote 
\[
w_0={^t}(1, 0),\;\;w_j={^t}(0, w_j'),\quad j=1,\ldots,d
\]
then it is clear that $Aw_j\sieq 0$ for $j=r+1,\ldots,d$ and $w_0,
w_1,\ldots,w_d$ is an orthonormal  basis of $\R^{d+1}$. Let $e_j$ be
the unit vector in $\R^{d+1}$ with $j+1$ th component $1$. Denote
$(e_0,\ldots,e_d)=(w_0,\ldots,w_d)P$, $P=(p_{ij})$ so that $P$ is
orthogonal matrix. It is clear that $P=1\oplus P'$.
Denote
\[
{\tilde \phi_i}=\sum_{k=0}^dp_{ik}\phi_k
\]
then we have ${\tilde \phi_0}=\phi_0$ and 
\[
p=-\xi_0^2+\sum_{j=1}^d{\tilde\phi_j^2}.
\]
Noting that $A$ is skew-symmetric we see that $PAP^{-1}\sieq \tilde A\oplus O_{d-r}$ where $O_{d-r}$ is the zero matrix of order $d-r$. Since $(\{{\tilde\phi_i},{\tilde\phi_j}\})\sieq PAP^{-1}$ we have the first assertion of \eqref{eq:form:1} replacing $\tilde\phi_j$ by $\phi_j$. Write $\tilde\phi_j\to \phi_j$ and denote
\[
\tilde A=(\{\phi_i,\phi_j\})_{0\leq i, j\leq r}=\begin{pmatrix}0&{^t}a'\\
-a'&A_1\end{pmatrix},\quad {^t}a'=(\{\xi_0,\phi_1\},\ldots,\{\xi_0,\phi_r\})
\]
where $a'(\bro)\neq 0$ otherwise $\{\xi_0,\phi_j\}(\bro)=0$ for
$j=1,\ldots,d$ so that $F_p(\bro)$ is similar to $F_{\xi_0^2}\oplus
F_{\Sigma_{j=1}^d\phi_j^2}(\bro)$ and hence $W(\bro)=\{0\}$
contradicting with assumption. We now show
\begin{equation}
\label{eq:M:r}
A_1=(\{\phi_i,\phi_j\})_{1\leq i, j\leq r}\quad \text{is non-singular}.
\end{equation}
 Since $P=1\oplus P'$ there is $v={^t}(1,v')$, $v'\neq 0$ such that $\tilde A v=0$, that is $\lr{a', v'}=0$ and $A_1v'=a'$. Suppose that there is $0\neq w'$ such that $A_1w'=0$. Note that $\lr{a', w'}=\lr{A_1v',w'}=-\lr{v', A_1w'}=0$ for $A_1$ is skew-symmetric. This shows that $\tilde A z=0$ with $z={^t}(1,v'+w')$ which contradicts to ${\rm dim\,Ker}\tilde A=1$ (for the rank of $\tilde A$ is $r$).

Define $\al={^t}(\al_1,\ldots,\al_r)\neq 0$ and $\psi$ by
\[
A_1\al=a',\quad \psi=-\sum_{j=1}^r\al_j\phi_j
\]
then since $A_1$ is skew-symmetric it is clear that 
\[
(\{\phi_i,\phi_j\})_{0\leq i,j\leq r}\begin{bmatrix}1\\
\al
\end{bmatrix}=\tilde A\begin{bmatrix}1\\
\al
\end{bmatrix}=0
\]
that is, one has 
\begin{equation}
\label{eq:0:psi}
\{\xi_0-\psi,\phi_j\}\sieq 0,\quad j=0,\ldots, d.
\end{equation}
Choosing a smooth orthogonal matrix $T=(t_{ij})_{1\leq i,j\leq r}$ with the first row $-
\al/|\al|$ 
consider ${\tilde \phi_j}=\sum_{k=1}^rt_{jk}\phi_k$ so that $\sum_{j=1}^r{\tilde\phi_j^2}=\sum_{j=1}^r\phi_j^2$ where 
\[
{\tilde \phi_1}=\psi/|\al|
\]
hence one can write
\begin{equation}
\label{eq:zen:form}
p=-(\xi_0+\tilde\phi_1)(\xi_0-\tilde\phi_1)+\sum_{j=2}^r{\tilde\phi_j^2}+\sum_{j=r+1}^d\phi_j^2.
\end{equation}
Here note that
\[
(\{\tilde\phi_i,\tilde\phi_j\})_{1\leq i,j\leq r}\sieq TA_1T^{-1},\quad \{\xi_0-|\al|\tilde\phi_1, \tilde\phi_j\}\sieq 0,\quad 0\leq j\leq d.
\]
If $\{\tilde\phi_1,\tilde\phi_j\}(\bro)=0$ for $2\leq j\leq r$ then $\{\xi_0,\tilde\phi_j\}(\bro)=0$ for $1\leq j\leq r$ from which we have $W(\bro)= \{0\}$ as before contradicting with the assumption. Thus we may assume $\{\tilde\phi_1,\tilde\phi_2\}(\bro)\neq 0$.
Writing $\psi=|\al|\tilde\phi_1\to \phi_1$ and $\tilde\phi_j\to \phi_j$ ($2\leq j\leq r$) we have 
\begin{equation}
\label{eq:form:1b}
p=-(\xi_0+\phi_1)(\xi_0-\phi_1)+\theta \phi_1^2+\sum_{j=2}^r\phi_j^2+\sum_{j=r+1}^d\phi_j^2
\end{equation}
where
\begin{equation}
\label{eq:te:teigi}
\theta=(1-|\al|^2)/|\al|^2,\quad \{\xi_0-\phi_1,\phi_j\}
\sieq 0,\;\;j=0,\ldots,d,\quad \{\phi_1,\phi_2\}(\bro)\neq 0.
\end{equation}
Thus we have the second assertion of \eqref{eq:form:1} and the first assertion of  \eqref{eq:form:2}.
 If we replace $\tilde\phi_1$ by $|\al|\tilde\phi_1$ then ${\rm det}(\{\tilde\phi_i,\tilde\phi_j\})_{1\leq i,j\leq r}$ is multiplied by $|\al|^2$ hence still 
nonsingular. Therefore we have
\[
{\rm det}(\{\phi_i,\phi_j\})_{1\leq i,j\leq r}\neq 0.
\]
Consider
\[
\tilde\phi_j=\sum_{k=2}^rt_{jk}\phi_k,\quad 2\leq j\leq r
\]
and choosing a suitable (smooth) orthogonal matrix $T=(t_{jk})_{2\leq j, k\leq r}$ with the first row=normalized ${^t}(\{\phi_1,\phi_2\},\ldots,\{\phi_1,\phi_r\})$ such that
\[
\sum_{j=2}^r{\tilde\phi_j^2}= \sum_{j=2}^r\phi_j^2,\quad \{\phi_1,\tilde\phi_j\}\sieq \sum_{k=2}^rt_{jk}\{\phi_1,\phi_k\}\sieq 0,\quad 3\leq j\leq r
\]
one can assume that
\[
\{\phi_1,\tilde\phi_2\}(\bro)=(\Sigma_{j=2}^r\{\phi_1,\phi_j\}^2)^{1/2}\neq 0,\quad \{\phi_1,\tilde\phi_j\}\sieq 0,\;\;3\leq j\leq r.
\]
Since it is clear that $\{\xi_0-\phi_1,\tilde\phi_j\}\sieq 0$, $0\leq j\leq d$ 
writing $\tilde\phi_j\to \phi_j$ we can assume
\begin{equation}
\label{eq:form:zantei}
\begin{split}
p=-(\xi_0+\phi_1)(\xi_0-\phi_1)+\theta \phi_1^2+\Sigma_{j=2}^r\phi_j^2+\Sigma_{j=r+1}^d\phi_j^2,\\
\{\xi_0-\phi_1,\phi_j\}\sieq 0,\quad 0\leq j\leq r,\quad \{\phi_1,\phi_j\}\sieq 0,\quad 3\leq j\leq r,\\
 \{\phi_1,\phi_2\}(\bro)\neq 0,\quad {\rm det}(\{\phi_i,\phi_j\})_{1\leq i,j\leq r}\neq 0.
\end{split}
\end{equation}
Write
\[
A_1=(\{\phi_i,\phi_j\})_{1\leq i, j\leq r}\sieq \begin{pmatrix}0&{^t}a^{(1)}\\
-a^{(1)}&A_2\end{pmatrix},\quad a^{(1)}={^t}(\{\phi_1,\phi_2\},0,\ldots,0)
\]
and show that ${\rm dim Ker}A_2=1$. Since $A_2$ is skew symmetric of odd order $r-1$ then ${\rm dim Ker} A_2\geq 1$ 
hence ${\rm rank\,}A_2\leq r-2$. Suppose ${\rm rank\,}A_2< r-2$ then, expanding ${\rm det}A_1$ by cofactors of the first column (or the first row),  it is clear that
\[
{\rm det}A_1=0
\]
hence contradiction, thus ${\rm rank\,}A_2=r-2$.
This proves ${\rm dim Ker} A_2= 1$ and ${\rm Ker} A_2$ is spanned by $\be={^t}(\be_2,\ldots,\be_r)\neq 0$. Note that $\be_2\neq0$. Otherwise $\tilde\be={^t}(0,0,\be_3,\ldots,\be_r)\neq 0$ verifies $A_1\tilde\be=0$ which is a contradiction. Choosing a smooth orthogonal matrix $T=(t_{ij})_{2\leq i,j\leq r}$ with the first row $
\be/|\be|$ 
consider ${\tilde \phi_j}=\sum_{k=2}^rt_{jk}\phi_k$ so that $\sum_{j=2}^r{\tilde\phi_j^2}=\sum_{j=2}^r\phi_j^2$. Then we have
\[
\{\tilde\phi_2,\tilde\phi_j\}\sieq 0,\quad 2\leq j\leq r.
\]
We also note that
\[
\{\phi_1,\tilde\phi_2\}(\bro)=\Sigma_{k=2}^r(\be_k/|\be|)\{\phi_1,\phi_k\}(\bro)=(\be_2/|\be|)\{\phi_1,\phi_2\}(\bro)\neq 0.
\]
Since $(\{\tilde\phi_i,\tilde\phi_j\})_{2\leq i,j\leq r}=TA_2T^{-1}$ then 
\[
{\rm dim Ker}(\{\tilde\phi_i,\tilde\phi_j\})_{2\leq i,j\leq r}=1.
\]
Writing $\tilde\phi_j\to\phi_j$ ($2\leq j\leq r$) one can write
\begin{equation}
\label{eq:form:2bis}
\begin{split}
p=-(\xi_0+\phi_1)(\xi_0-\phi_1)+\theta \phi_1^2+\Sigma_{j=2}^r\phi_j^2+\Sigma_{j=r+1}^d\phi_j^2,\\
\{\xi_0-\phi_1,\phi_j\}
\sieq 0,\;\;0\leq j\leq d,\\
 \{\phi_2,\phi_j\}\sieq 0, \quad 2\leq j\leq r,\quad \{\phi_1,\phi_2\}(\bro)\neq 0.
\end{split}
\end{equation}
Since
\[
A_2=(\{\phi_i,\phi_j\})_{2\leq i, j\leq r}\sieq \begin{pmatrix}0&0\\
0&A_3\end{pmatrix}
\]
it is clear that $A_3=(\{\phi_i,\phi_j\})_{3\leq i,j\leq r}$ is nonsingular because ${\rm dim Ker}A_2=1$. 
Here note the following 
\begin{lem}{\rm(cf.\cite{MR2942985})}
\label{lem:BPP}Let $p$ have the form \eqref{eq:form:0} with \eqref{eq:form:1} and \eqref{eq:form:2}.
Then
\begin{align*}
\theta(\rho)=0&\Longleftrightarrow W(\rho)\neq \{0\},\\
\theta(\rho)>0& \Longleftrightarrow \text{$p$ is non-effectively hyperbolic at $\rho$ and $W(\rho)=\{0\}$},\\
(-1<)\;\;\theta(\rho)<0 &\Longleftrightarrow  \text{$p$ is effectively hyperbolic at $\rho$}.
\end{align*}
\end{lem}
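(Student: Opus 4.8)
The plan is to compute the nonzero eigenvalues of the Hamilton map $F_p(\rho)$ directly from the normal form \eqref{eq:form:0} at a fixed $\rho\in\Sigma'$, exploiting the block structure imposed by \eqref{eq:form:1} and \eqref{eq:form:2}. First I would note that $F_p(\rho)$ depends only on the Hessian of $p$ at $\rho$, hence only on the linearizations $\ell_j$ of the $\phi_j$ at $\rho$ together with the Poisson brackets $\{\phi_i,\phi_j\}(\rho)$, which are the symplectic-form entries of the $\ell_j$. By \eqref{eq:form:1} the factor $\xi_0-\phi_1$ Poisson-commutes with every $\phi_j$ (modulo $\Sigma'$, hence exactly at $\rho\in\Sigma'$), and $\phi_{r+1},\dots,\phi_d$ lie in the radical; so $F_p(\rho)$ splits, up to similarity, as a block coming from $-(\xi_0+\phi_1)(\xi_0-\phi_1)+\theta\phi_1^2$ together with $\Sigma_{j=2}^r\phi_j^2$, plus a nilpotent part from $\Sigma_{j=r+1}^d\phi_j^2$ that contributes only zero eigenvalues. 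The real content is therefore the $2r\times 2r$ (or smaller) block, and there I would use \eqref{eq:form:2}: $\{\phi_2,\phi_j\}(\rho)=0$ for $3\le j\le r$ and $\det(\{\phi_i,\phi_j\})_{3\le i,j\le r}\ne 0$, so the $\phi_3,\dots,\phi_r$ contribute a nondegenerate symplectic (elliptic) sub-block with purely imaginary nonzero eigenvalues $\pm i\mu_3,\dots$, whose product is a positive number $c(\rho)>0$.

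Next I would isolate the genuinely two-dimensional core spanned (in the relevant quotient) by the linearizations of $\xi_0+\phi_1$, $\xi_0-\phi_1$, $\phi_1$, $\phi_2$. Here $\{\xi_0-\phi_1,\cdot\}=0$ kills one direction; writing $q=-(\xi_0+\phi_1)(\xi_0-\phi_1)+\theta\phi_1^2+\phi_2^2$ and computing $H_q$ on the span of $H_{\xi_0\pm\phi_1}, H_{\phi_1}, H_{\phi_2}$, one gets a $4\times4$ matrix (acting on the span of these Hamilton fields) whose characteristic polynomial factors as $\lambda^2(\lambda^2 + c'\,\theta(\rho))$ for a positive constant $c'$ built from $\{\phi_1,\phi_2\}(\rho)^2\ne0$ and $\{\xi_0,\phi_1\}(\rho)$ — this is exactly the computation underlying the example in \cite{MR3185898} and the low-codimension cases of \cite{MR2942985}, and \eqref{eq:rei:1} is the model. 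Thus the nonzero eigenvalues of the core are $\pm\sqrt{-c'\theta(\rho)}$: purely imaginary when $\theta(\rho)>0$, real when $\theta(\rho)<0$, and a double zero when $\theta(\rho)=0$. Combining with the elliptic block, $\Pi\lambda_j(\rho)$ equals $c(\rho)c'\,\theta(\rho)$ up to sign and positive factors, which simultaneously proves the three equivalences and the relation $\theta=f\,\Pi\lambda_j$ with $f>0$.

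For the $\Longleftrightarrow$ with $W(\rho)\ne\{0\}$ I would argue: $\theta(\rho)<0$ gives two real eigenvalues, i.e.\ effective hyperbolicity, hence $W(\rho)=\{0\}$; $\theta(\rho)>0$ gives all nonzero eigenvalues purely imaginary and, counting multiplicities, exactly $r$ of them, so $F_p(\rho)$ has the maximal number of nonzero eigenvalues compatible with \eqref{eq:kaisu}, forcing $\ker F_p^2\cap\operatorname{Im}F_p^2=\{0\}$ by the normal-form classification in \cite[Theorem 1.4.6]{MR492751} (the relevant canonical form then has no $2\xi_0\xi_1+x_1^2$ summand); conversely $\theta(\rho)=0$ makes the core nilpotent of rank $2$ with a $4$-dimensional generalized kernel structure, and one reads off a nonzero vector in $\ker F_p^2\cap\operatorname{Im}F_p^2$ (the $H_{\xi_0-\phi_1}$ direction together with its preimage), so $W(\rho)\ne\{0\}$. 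The main obstacle I expect is the bookkeeping in the core computation: passing from Poisson brackets of the nonlinear $\phi_j$ to the matrix of $F_p(\rho)$ in a basis adapted to the splitting, and checking that the off-diagonal coupling between the core block and the elliptic block \eqref{eq:form:2} really vanishes at $\rho$ (it does, precisely because $\{\phi_2,\phi_j\}(\rho)=0$ for $j\ge3$ and $\{\xi_0-\phi_1,\phi_j\}(\rho)=0$), so that the eigenvalue product genuinely factors; handling the constant-sign ambiguity so that the final $f$ comes out positive is a matter of tracking the signs of $\{\phi_1,\phi_2\}(\rho)^2$ and the $\mu_j^2$, all of which are positive.
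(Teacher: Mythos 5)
Your overall strategy coincides with the paper's: show that the product of nonzero eigenvalues of $F_p(\rho)$ equals $\theta(\rho)$ times a positive factor, and then read off the three cases from the canonical-form classification. However, the step on which your computation rests — that $F_p(\rho)$ block-decouples at $\rho$ into a ``core'' in the $\xi_0,\phi_1,\phi_2$ directions and an ``elliptic'' block in the $\phi_3,\dots,\phi_r$ directions, so that the characteristic polynomial factors as $\lambda^2(\lambda^2+c'\theta)\cdot\prod_j(\lambda^2+\mu_j^2)$ — is not justified and in general does not hold. The hypotheses \eqref{eq:form:2} give $\{\phi_2,\phi_j\}(\rho)=0$ for $j\ge3$, i.e.\ the row and column of the bracket matrix corresponding to $\phi_2$ vanish away from the $(1,2)$ slot; but nothing in \eqref{eq:form:1}--\eqref{eq:form:2} forces $\{\phi_1,\phi_j\}(\rho)=0$ for $j\ge3$, and the normal-form construction in the paper does not produce that vanishing. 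Your parenthetical claim that the off-diagonal coupling vanishes ``precisely because $\{\phi_2,\phi_j\}(\rho)=0$ for $j\ge3$ and $\{\xi_0-\phi_1,\phi_j\}(\rho)=0$'' is a genuine gap: the second condition only identifies $\{\xi_0,\phi_j\}(\rho)$ with $\{\phi_1,\phi_j\}(\rho)$, it does not kill either. Thus the row/column indexed by $\phi_1$ (and, equivalently, by $\xi_0$) remains coupled to the elliptic block, the eigenvalues do not split, and the claimed $4\times4$ core characteristic polynomial is not a factor of $\det(\lambda-F_p(\rho))$ when $r\ge4$.

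The paper avoids the decoupling entirely. After rescaling $\phi_1\mapsto(1+\theta)^{1/2}\phi_1$ so that $p=-\xi_0^2+\sum_j\phi_j^2$, it represents $F_p(\rho)|_{\operatorname{Im}F_p(\rho)}$ by the matrix $A'=\begin{pmatrix}0&{}^ta\\ a&M\end{pmatrix}$ and proves the identity $\det(\mu+A')=\mu\,G(\mu)$ with $G(0)=(1-|\alpha|^2)\det M$, where $M\alpha=a$. Only the \emph{constant term} of the char poly is ever computed, not a factorization. The condition $\{\xi_0-\phi_1,\phi_j\}\sieq0$ then forces $\alpha$ to be supported on its first coordinate with $|\alpha|^2=1/(1+\theta)$, giving $G(0)=\tfrac{\theta}{1+\theta}\det M$, which carries the sign of $\theta$ because $\det M>0$ for any nonsingular skew-symmetric $M$; no factoring of $\det M$ or of the spectrum is required. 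Your sketch also omits the argument that identifies the roots of $G$ with $\operatorname{Spec}F_p(\rho)\setminus\{0\}$: one needs that nonzero eigenvalues of $F_p(\rho)$ are \emph{semisimple} when $W(\rho)=\{0\}$ (so that, with $\deg G=r$, the $r$ of them with multiplicity are exactly the roots of $G$), and conversely a lifting argument showing that a non-semisimple nonzero eigenvalue of $A'$ would produce one for $F_p$. Without these, neither the identification $\Pi\lambda_j=G(0)$ nor the equivalence $\theta(\rho)=0\Leftrightarrow W(\rho)\neq\{0\}$ is established.
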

\begin{proof}
Write $\delta\phi_1\to \phi_1$ with $\delta=(1+\theta)^{1/2}$  so that $p=-\xi_0^2+\Sigma_{j=1}^d\phi_j^2$.  If $F_pX=\mu X$ with $\mu\neq 0$ then $X\in {\rm Im\,}F_p$ and hence $X=\Sigma_{j=0}^d\eta_jH_{\phi_j}$. Let $\ep_0=-1$ and $\ep_j=1$ for $1\leq j\leq d$ and note that
\begin{equation}
\label{eq:koyuti}
\begin{split}
F_p (\sum_{j=0}^d\eta_jH_{\phi_j})=\sum_{i,j=0}^d\ep_i\{\phi_j,\phi_i\}\eta_jH_{\phi_i}=\sum_{i=0}^d\zeta_iH_{\phi_i},\\
 \zeta=A\eta,\quad A=(\ep_i\{\phi_j,\phi_i\})_{0\leq i,j\leq d}.
\end{split}
\end{equation}
Consider
\[
A=A'\oplus O_{d-r},\quad A'=\begin{pmatrix}0&{^t} a\\
 a&M
\end{pmatrix}.
\]
Note that $M$ is nonsingular provided $\delta\neq 0$.
From \eqref{eq:koyuti} a non-zero eigenvalue of $F_p(\rho)$ is also an eigenvalue of $A'$. 
Let $ a=M\al$. Since $M$ is skew-symmetric we see that
\begin{equation}
\label{eq:M:a}
A'\begin{pmatrix}1\\
-{\al}\end{pmatrix}=0,\quad M\al=a={^t}(\{\xi_0,\phi_1\},\ldots,\{\xi_0,\phi_r\}).
\end{equation}
Note that
\[
\begin{pmatrix}1&{^t}\al\\
0&1_d
\end{pmatrix}
\begin{pmatrix}\mu&{^t}(M\al)\\
M\al&\mu+M
\end{pmatrix}=\begin{pmatrix}\mu&\mu{^t}\al\\
M\al&\mu+M
\end{pmatrix}
\]
for ${^t}M=-M$ and that
\[
\begin{pmatrix}\mu&\mu{^t}\al\\
M\al&\mu+M
\end{pmatrix}\begin{pmatrix}1&0\\
-\al&1_d
\end{pmatrix}=\begin{pmatrix}\mu-\mu|\al|^2&\mu{^t}\al\\
-\mu\al&\mu+M
\end{pmatrix}.
\]
 Hence
\[
{\rm det}(\mu+A')={\rm det}\begin{pmatrix}\mu-\mu|\al|^2&\mu{^t}\al\\
-\mu\al&\mu+M
\end{pmatrix}=\mu\,{\rm det}\begin{pmatrix}1-|\al|^2&\mu{^t}\al\\
-\al&\mu+M
\end{pmatrix}.
\]
Therefore we have
\begin{equation}
\label{eq:det:A}
{\rm det}(\mu+A')=\mu G(\mu),\quad G(0)=(1-|\al|^2){\rm det}M
\end{equation}
where  ${\rm det}M>0$ because $M$ is non-singular skew-symmetric. 
Assume $W(\rho)=\{0\}$. Thanks to \cite[Lemma 1.4.4 ]{MR492751}, non-zero eigenvalues of $F_p(\rho)$ are semisimple (algebraic multiplicity=geometric multiplicity), denoted  $\lambda_1,\ldots,\lambda_r$ with (counting multiplicity) linearly independent eigenvectors $v_i=\Sigma_{j=0}^d\eta_{ij}H_{\phi_j}\in 
{\rm Im}F_p$, these  $\lambda_1,\ldots,\lambda_r$ are also semisimple eigenvalues of 
$A'$ with linearly independent eigenvectors $\eta_i={^t}(\eta_{i0},\ldots,\eta_{id})$. Since $G(\mu)$ is a monic polynomial of order $r$ we must have $G(\mu)=\Pi_{j=1}^r(\mu-\lambda_j)$. In particular this proves $|\al(\rho)|\neq 1$ and 
\begin{equation}
\label{eq:kankei}
(1-|\al(\rho)|^2){\rm det}\,M=\Pi_{j=1}^r \lambda_j(\rho),\quad \rho\in \Sigma,\quad W(\rho)=\{0\}.
\end{equation}
Assume $W(\rho)\neq \{0\}$ then $F_p(\rho)$ has $r-2$ nonzero semi-simple eigenvalues (with counting multiplicity). Suppose $|\al(\rho)|\neq 1$ so that $A'$ has $r$ nonzero eigenvalues.
It is clear that not all are semi-simple. So assume  $\lambda_j\neq 0$ is not a semi-simple eigenvalue of $A'$ so that there is $0\neq \eta={^t}(\eta_0,\ldots, \eta_r)$ such that $(A'-\lambda_j)\eta\neq 0$ and $(A'-\lambda_j)^k\eta=0$ with some $k\geq 2$. With $v=\Sigma_{j=0}^r\eta_jH_{\phi_j}(\rho)$ we have $(F_p(\rho)-\lambda_j)v\neq 0$ while $(F_p(\rho)-\lambda_j)^kv= 0$ which shows that $\lambda_j$ is nonzero and non semi-simple eigenvalue of $F_p$. This contradiction proves that $|\al(\rho)|=1$. Thus
\begin{equation}
\label{eq:kankei:2}
|\al(\rho)|=1\Longleftrightarrow W(\rho)\neq \{0\}.
\end{equation}
Thanks to \cite[Theorem 1.4.6]{MR492751}  we have either $F_p(\rho)=\{0,\pm i\mu_j(\rho), 1\leq j\leq r/2\}$ or $F_p(\rho)=\{0,\pm e(\rho), \pm i\mu_j(\rho), 1\leq j\leq r/2-1\}$ if $W(\rho)=\{0\}$ where $e(\rho)>0, \mu_j(\rho)>0$. The former case we have
\[
\Pi_{j=1}^r\lambda_j=\Pi_{j=1}^{r/2}(i\mu_j)(-i\mu_j)=\Pi_{j=1}^{r/2}\mu_j^2>0
\]
hence $|\al(\rho)|<1$ and vise versa. In the latter case we have
\[
\Pi_{j=1}^r\lambda_j=\Pi_{j=1}^{r/2-1}e(-e)(i\mu_j)(-i\mu_j)=-e^2\Pi_{j=1}^{r/2-1}\mu_j^2<0
\]
hence $|\al(\rho)|>1$ and vice versa.

Thus we have 
\begin{gather*}
|\al(\rho)|<1\Longleftrightarrow \text{$p$ is non-effectively hyperbolic at $\rho$ and $W(\rho)=\{0\}$},\\
|\al(\rho)|>1\Longleftrightarrow  \text{$p$ is effectively hyperbolic at $\rho$}.
\end{gather*}
 In view of \eqref{eq:form:1} one has 
\[
(\{\phi_i,\phi_j\})_{0\leq i,j\leq r}v\sieq 0,\quad v={^t}(1,-1/\delta,0,\ldots,0).
\]
On the other hand since ${\rm dim Ker\,}A'=1$ it follows from \eqref{eq:M:a} that $
\al={^t}(-1/\delta,0,\ldots,0)$ so that $\delta|\al|=1$, that is
\[
1+\theta=1/|\al|^2>0.
\]
This proves the assertion.

Thus applying this lemma one can prove \eqref{eq:te:seifu}.  
From $\theta=(1-|\alpha|^2)/|\alpha|^2$ it follows from \eqref{eq:kankei} that
\begin{equation}
\label{eq:kankei:b}
\Pi_{j=1}^r\lambda_j(\rho)=(|\al(\rho)|^2{\rm det}M(\rho))\theta(\rho),\quad \rho\in\Sigma',\;\;W(\rho)=\{0\}
\end{equation}
where $|\al(\rho)|^2{\rm det}M(\rho)$ is positive and smooth near $\bro$. 
\end{proof}
\begin{remark}\rm
Let $q=\Sigma_{j=1}^r\phi_j^2$ where $d\phi_j$ are linearly independent at $\bro$. Then
\begin{equation}
\label{eq:sei:tr}
{\rm Tr^{+}}A(\rho)={\rm Tr^{+}}F_p(\rho),\quad A=(\{\phi_i,\phi_j\})_{1\leq i,j\leq r}.
\end{equation}
\end{remark}
%
%
Let us apply Proposition \ref{pro:form:1} to a concrete example with $d=3$.  Consider
\begin{equation}
\label{eq:rei:2}
p=-\xi_0^2+\xi_1^2+(x_0+x_1-x_0x_2^k/k)^2\xi_n^2+\xi_2^2,\quad |x_i|\ll 1
\end{equation}
near $(0,e_n)$ with $n\geq 3$, $k\in\N$. It is clear that the doubly characteristic set is given by $
\Sigma=\{\xi_0=\xi_1=\xi_2=0, x_0+x_1-x_0x_2^{k}/k=0\}$. Rewrite $p$ in a normal form. Denoting 
\begin{gather*}
\varphi_1=(1-x_2^k/k)\rho^2(\xi_1-x_0x_2^{k-1}\xi_2),\quad \varphi_2=(x_0+x_1-x_0x_2^{k}/k)\xi_n,\\
 \varphi_3=\rho (x_0x_2^{k-1}\xi_1+\xi_2),\quad \rho=1/\sqrt{1+x_0^2x_2^{2(k-1)}}
\end{gather*}
one can write $
p=-(\xi_0-\varphi_1)(\xi_0+\varphi_1)+\theta\varphi_1^2+\varphi_2^2+\varphi_3^2$ 
where $\theta$ is given by $(1-x_2^k/k)^2(1+\theta)=1+x_0^2x_2^{2(k-1)}$, that is
\[
\theta=\big(2x_2^k/k+x_0^2x_2^{2(k-1)}-x_2^{2k}/k^2\big)/(1-x_2^k/k)^2
\]
 which is of normal form, indeed 
\[
\{\xi_0-\varphi_1,\varphi_2\}\equiv 0,\;\{\xi_0-\varphi_1,\varphi_j\}\sieq 0,\; j=0,1,3,\;\{\varphi_3,\varphi_j\}\sieq 0,\; j=0,1,2.
\]
Applying Proposition \ref{pro:form:1} we see that when $k\geq 1$ is odd $p$ is effectively hyperbolic in $\Sigma\cap\{x_2<g_k(x_0)\}$ and noneffectively hyperbolic of type 2 and of type 1 in $\Sigma\cap\{x_2=g_k(x_0)\}$ and $\Sigma\cap\{x_2>g_k(x_0)\}$ respectively where $g_1=1-\sqrt{1+x_0^2}$ and $g_k=0$ for $k\geq 3$. When $k\geq 2$ is even $p$ is noneffectively hyperbolic of type $1$ in $\Sigma\cap\{x_2\neq 0\}$ and of type 2 on $\Sigma\cap\{x_2=0\}$.


\section{Extension lemma}
\label{exlem}
As mentioned in Remark \ref{rem:O:4}, if $\tilde\theta|_{\Sigma'}=\theta|_{\Sigma'}$ we can write
\[
p=-(\xi_0+\tilde\phi_1)(\xi_0-\tilde\phi_1)+(\tilde\theta+O^2(\Sigma'))\tilde\phi_1^2+\Sigma_{j=2}^r\phi_j^2+\Sigma_{j=r+1}^d\phi_j^2
\]
which is still a normal form. Therefore the extension of $\theta|_{\Sigma'}$ to a full neighborhood of $\bro$ becomes an important issue.

\subsection{Extension of $C^{\infty}(\Sigma')$}
\begin{lem}
\label{lem:kakutyo}Assume that $\Sigma$ is given by $\phi_j=0$ $(0\leq j\leq d)$ satisfying \eqref{eq:form:1} and \eqref{eq:form:2}. There are neighborhoods $V_i$ of $\bro$  in $\R^{
n+1}\times (\R^n\setminus 0)$ verifying the followings: for any $\theta(x,\xi')\in C^{\infty}(\Sigma'\cap V_1)$ there exists an extension $\tilde\theta\in C^{\infty}(V_2)$ such that 
\begin{equation}
\label{eq:ext:1}
\{\phi_1, \tilde\theta\}=c_1\phi_1+c_2\phi_2,\quad \{\phi_2,\tilde\theta\}=c_3\phi_2\quad \text{in}\;\;V_2
\end{equation}
with smooth $c_{i}$ and
\begin{equation}
\label{eq:ext:2}
\{\phi_j,\tilde\theta\}\sieq 0,\quad 3\leq j\leq r.
\end{equation}
 Moreover we can take $\inf_{\Sigma'\cap V_1} \theta\leq {\tilde\theta}\leq \sup_{\Sigma'\cap V_1}\theta$ in $V_2$. 
\end{lem}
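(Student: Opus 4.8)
The plan is to build the extension in two stages: first extend $\theta$ off $\Sigma'$ so as to kill the bracket with $\phi_1$, then correct the result so the bracket with $\phi_2$ becomes a multiple of $\phi_2$ while leaving the $\phi_1$-bracket condition intact, and finally check that the brackets with $\phi_3,\dots,\phi_r$ are automatically $O(\Sigma')$. Throughout I will use the structure afforded by \eqref{eq:form:1}--\eqref{eq:form:2}: since $\{\xi_0-\phi_1,\phi_j\}\sieq 0$ for all $j$, and $\{\phi_1,\phi_2\}(\bro)\neq 0$, the functions $\xi_0-\phi_1$, $\phi_1$, $\phi_2$, together with an appropriate choice of transversal coordinates, can be completed to a local coordinate system near $\bro$; the non-degeneracy ${\rm det}(\{\phi_i,\phi_j\}(\bro))_{3\le i,j\le r}\neq 0$ and $\{\phi_2,\phi_j\}\sieq 0$ ($3\le j\le r$) will be used at the end.

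\emph{Step 1 (extension along the $H_{\phi_1}$-flow).} Given $\theta\in C^\infty(\Sigma'\cap V_1)$, I first extend it to a neighborhood by declaring it constant along the integral curves of the Hamilton vector field $H_{\phi_1}$ issued from $\Sigma'\cap\{\phi_1=0\}$ — more precisely, I use the flow of $H_{\phi_1}$ to flow off the codimension-one slice of $\Sigma'$ cut out by $\phi_1=0$, and an arbitrary smooth extension in the remaining transversal directions. Because $\{\phi_1,\phi_2\}(\bro)\neq 0$, the field $H_{\phi_1}$ is transversal to $\{\phi_2=0\}$ near $\bro$, so this flow-out is well defined and smooth on a neighborhood $V_2$. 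By construction $H_{\phi_1}\tilde\theta = \{\phi_1,\tilde\theta\}$ vanishes on the hypersurface from which we flowed; but I actually want $\{\phi_1,\tilde\theta\}=c_1\phi_1+c_2\phi_2$ everywhere in $V_2$. To get this, note that after Step 1 one has $\{\phi_1,\tilde\theta\}=0$ on $\{\phi_1=0\}\cap\Sigma$, hence (by the division/Malgrange-type argument, or simply Taylor expansion in the coordinates $\phi_1,\phi_2,\dots$) $\{\phi_1,\tilde\theta\}$ is a combination of $\phi_1,\phi_2,\dots,\phi_r$ plus lower-order slice variables; a more careful bookkeeping using $\{\phi_1,\phi_j\}\sieq 0$ for $j\ge 3$ shows the obstruction lives only in the $\phi_1,\phi_2$ directions. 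This is the place where the precise bracket relations \eqref{eq:form:2} do the work.

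\emph{Step 2 (correction to control $\{\phi_2,\tilde\theta\}$).} The first equation in \eqref{eq:ext:1} is preserved under modifications of $\tilde\theta$ by $O^2(\Sigma')$ (as in Remark \ref{rem:O:4}, where $\tilde\theta|_{\Sigma'}$ is unchanged), so there is room to adjust. I look for $\tilde\theta$ in the same $H_{\phi_1}$-invariant class with the additional requirement that it be invariant along a suitable combination of the $H_{\phi_j}$'s ($j\ge 2$): concretely, one can choose the transversal extension in Step 1 so that $\tilde\theta$ is also flowed out along $H_{\phi_2}$ from $\{\phi_2=0\}$ inside the already-constructed level surfaces of $\phi_1$; compatibility of these two flows is exactly the statement that $[H_{\phi_1},H_{\phi_2}]=H_{\{\phi_1,\phi_2\}}$ acts trivially on the relevant invariants up to $O(\Sigma')$, which follows since $\{\phi_1,\phi_2\}$ is (up to $O(\Sigma')$) a function of the coordinates we are holding fixed. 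This yields $\{\phi_2,\tilde\theta\}\sieq 0$, and then one upgrades $\sieq 0$ to $=c_3\phi_2$ by the same Taylor-in-$\phi_2$ argument, using $\{\phi_2,\phi_j\}\sieq 0$ for $3\le j\le r$ to rule out the other $\phi_j$-components and $\{\xi_0-\phi_1,\phi_2\}\sieq0$ to rule out a $\phi_1$-component. Finally \eqref{eq:ext:2} is immediate: for $3\le j\le r$, $\{\phi_j,\tilde\theta\}$ restricted to $\Sigma'$ equals $\{\phi_j,\theta\}$ computed on $\Sigma'$, and since $\theta|_{\Sigma'}$ only depends on the variables transverse to the span of the $H_{\phi_i}$, $i=0,\dots,r$ (recall $d\phi_j|_{\Sigma'}$ are used to parametrize $\Sigma'$), this bracket vanishes on $\Sigma'$. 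The bound $\inf_{\Sigma'\cap V_1}\theta\le\tilde\theta\le\sup_{\Sigma'\cap V_1}\theta$ holds because in both flow-out steps $\tilde\theta$ takes only values already attained by $\theta$ on $\Sigma'\cap V_1$ (shrinking $V_2$ if necessary so that the flows stay over $V_1$), with the trivial extension in the remaining directions also chosen range-preserving, e.g.\ by composing with a smooth truncation to $[\inf\theta,\sup\theta]$ that is the identity on that interval.

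\emph{Main obstacle.} The delicate point is Step 2: arranging a single $\tilde\theta$ that simultaneously satisfies the $\phi_1$- and $\phi_2$-bracket conditions. The two flow-outs need not commute on the nose, so one must check that the non-commutativity is absorbed into the ambiguity ($O^2(\Sigma')$ modifications, or $O(\Sigma')$ in the brackets) permitted by the statement — this is where the specific normal-form relations $\{\phi_2,\phi_j\}\sieq 0$ and the fact that $\{\phi_1,\phi_2\}$ is, modulo $O(\Sigma')$, a function of the ``frozen'' variables, are essential. Everything else (existence and smoothness of the flows, the Taylor/division arguments turning $\sieq0$ into exact multiples of $\phi_1$ or $\phi_2$, the range bound) is routine once the coordinates adapted to $\xi_0-\phi_1,\phi_1,\phi_2,\phi_3,\dots,\phi_r$ are in place.
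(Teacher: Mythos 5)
Your proposal does not close the central difficulty, and one of the ``routine'' claims is in fact false.

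\textbf{The gap in Step 2.} You correctly flag that the flow-outs along $H_{\phi_1}$ and $H_{\phi_2}$ need not commute (indeed $[H_{\phi_1},H_{\phi_2}]=H_{\{\phi_1,\phi_2\}}$ with $\{\phi_1,\phi_2\}(\bro)\neq 0$), and you leave the resolution as a hope that the noncommutativity ``is absorbed into the ambiguity.'' But that is exactly the point that needs a proof, and the heuristic you offer — that $\{\phi_1,\phi_2\}$ is, modulo $O(\Sigma')$, a function of the frozen variables — is not something the hypotheses give you; there is no control on $\{\phi_1,\phi_2\}$ beyond its being nonzero at $\bro$. The paper sidesteps this entirely: it never flows along $H_{\phi_1}$ or $H_{\phi_2}$. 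Instead it constructs homogeneous symplectic coordinates $(X,\Xi)$ (via \cite[Theorems~21.1.9, 21.2.4]{MR781536}) in which $\phi_2=\tilde e_2(X_0-X_1)$, $\phi_1=\tilde e_1\Xi_1+\tilde f_1(X_0-X_1)$, the remaining defining functions of $\Sigma'$ become linear in $(X_2,\dots,X_l,\Xi_2,\dots,\Xi_l)$ plus $\Xi_{l+1},\dots,\Xi_{d-l}$, and $\Sigma'$ is a flat model $\{X_0-X_1=0,\,\Xi_1=0,\,X_2=\cdots=X_l=\Xi_2=\cdots=\Xi_l=0,\,\Xi_{l+1}=\cdots=\Xi_{d-l}=0\}$. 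The extension is then $\tilde\Theta(X,\Xi')=\Theta(X_0,X_0,0,\dots,0,X_{l+1},\dots,X_n,0,\dots,0,\Xi_{d-l+1},\dots,\Xi_n)$, i.e.\ constant along the \emph{commuting} coordinate fields $\partial_{X_1},\partial_{\Xi_1},\partial_{X_2},\dots,\partial_{X_l},\partial_{\Xi_2},\dots,\partial_{\Xi_{d-l}}$. Because $\phi_1,\phi_2$ are explicit affine expressions in $X_0,X_1,\Xi_1$, the equalities $\{X_0-X_1,\tilde\Theta\}=0$ and $\{\Xi_1,\tilde\Theta\}=0$ immediately give $\{\phi_2,\tilde\theta\}=c_3\phi_2$ and $\{\phi_1,\tilde\theta\}=c_1\phi_1+c_2\phi_2$; the noncommutativity obstruction never arises.

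\textbf{The argument for \eqref{eq:ext:2} is incorrect as stated.} You claim $\{\phi_j,\tilde\theta\}|_{\Sigma'}$ ``equals $\{\phi_j,\theta\}$ computed on $\Sigma'$'' and is therefore determined by $\theta|_{\Sigma'}$. This is not so: $\{\phi_j,\tilde\theta\}=H_{\phi_j}\tilde\theta$, and for $3\le j\le r$ the field $H_{\phi_j}$ is \emph{transverse} to $\Sigma'$ (since $\det(\{\phi_i,\phi_j\})_{3\le i,j\le r}(\bro)\neq 0$), so its restriction to $\Sigma'$ genuinely involves the transverse derivatives of the extension, not only $\theta|_{\Sigma'}$. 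In other words, \eqref{eq:ext:2} is a real constraint on how one extends, not a consequence of the extension existing. In the paper's coordinates this is checked by writing $\hat\Phi_j(0,X'',\Xi'')=\sum_k a_{jk}\Xi_k+\sum_k b_{jk}X_k$ for $3\le j\le r$ and using that $\tilde\Theta$ is independent of $\Xi_1,\dots,\Xi_l$ and of $X_0,\dots,X_{d-l}$ together with $d-l\ge l$; this is exactly the structural fact your flow-out construction does not supply.

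In short, the skeleton of your plan (build $\tilde\theta$ by making it constant in suitable directions, then read off the brackets) is the right intuition, but the correct choice of directions is forced to be a set of \emph{commuting} symplectic-coordinate fields obtained from a Darboux normal form for $(\phi_1,\dots,\phi_d)$, not the non-commuting $H_{\phi_1},H_{\phi_2}$. Without that normal form, both Step~2 and your proof of \eqref{eq:ext:2} remain open.
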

\begin{proof} 
Note that \eqref{eq:form:1} and \eqref{eq:form:2} imply $
\{\xi_0,\phi_2\}(\bro)\neq 0$ hence $\dif_{x_0}\phi_2(\bro)\neq 0$ then one can write
\[
\phi_2=e_2(x_0-\psi(x',\xi')),\quad e_2\neq 0,\quad x'=(x_1,\ldots,x_n),\;\;\xi'=(\xi_1,\ldots,\xi_n).
\]
In view of \eqref{eq:form:2} it follows that $d\psi(\bro)\neq 0$ because $\phi_1$ is independent of $\xi_0$. Therefore take $\Xi_0=\xi_0$, $X_0=x_0$ and $X_1=\psi(x',\xi')$ which satisfy the commutation relations and $d\Xi_0$, $dX_0$, $dX_1$, $\Sigma_{j=0}^n\xi_jdx_j$ are linearly independent at
$\bro$ hence extend to a full homogeneous symplectic coordinates system $(X,\Xi)$ (\cite[Theorem 21.1.9]{MR781536}). Changing $(X,\Xi)\to (x,\xi)$ one can assume that
\[
\phi_2=e_2(x_0-x_1).
\]
Since \eqref{eq:form:2} implies that $\dif_{\xi_1}\phi_1(\bro)\neq 0$ one can write
\[
\phi_1=e_1(\xi_1-\psi(x,\xi'')),\quad e_1\neq0,\quad \xi''=(\xi_2,\ldots,\xi_n).
\]
Now investigate $\Sigma'$ which is given by $\Sigma'=\{\phi_j=0,j=1,\ldots, d\}$ which is also given by
\begin{gather*}
\{\phi_1=\phi_2=0, \phi_j+\be_j\phi_2=0, 3\leq j\leq r, \phi_{r+1}=\cdots=\phi_d=0\}
\end{gather*}
where smooth $\be_j$ are free. Choosing $\be_k=\{\phi_1,\phi_k\}/\{\phi_2,\phi_1\}$ we have
\begin{equation}
\label{eq:xi:0}
\{\phi_j, \phi_k+\be_k\phi_2\}\sieq 0,\quad 3\leq k\leq r,\;\;j=0,1,2.
\end{equation}
In fact, if $j=2$ this is clear from \eqref{eq:form:2}. If $j=1$ it is clear from the choice of $\be_k$. The case $j=0$ is reduced to the case $j=1$ by \eqref{eq:form:1}. Writing $\phi_k+\be_k\phi_2\to\varphi_k$ ($3\leq k\leq r$) and  $\phi_j\to \varphi_j$ ($r+1\leq j\leq d$) the manifold $\Sigma'$ is given by
\begin{gather*}
\Sigma'=\{x_0-x_1=0, \xi_1-\tilde\psi(x',\xi'')=0, \tilde\varphi_j(x',\xi')=0, 3\leq j\leq d\}
\end{gather*}
where
\[
\tilde\psi(x',\xi'')=\psi(x_1,x_1,x'', \xi''),\quad \tilde\varphi_j(x',\xi')=\varphi_j(x_1,x_1,x'',\xi').
\]
Write  $\xi_1-\tilde\psi(x',\xi'')=e_1^{-1}\phi_1(x,\xi')+(x_0-x_1)f$ and $\tilde\varphi_j=\varphi_j+(x_0-x_1)f_j$. From $\{\xi_0,\varphi_j\}\sieq 0$ ($j\geq 3$) by \eqref{eq:xi:0} one has  $f_j\sieq 0$ ($j\geq 3$). Then we have
\begin{equation}
\label{eq:kankei:2-1}
\{\xi_1-\tilde\psi,\tilde\varphi_j\}\sieq 0,\quad 3\leq j\leq d
\end{equation}
for $\{\phi_1,\varphi_j\}\sieq 0$ and $\{x_0-x_1, \varphi_j\}\sieq 0$. It is also clear that
\[
\{\tilde\varphi_i,\tilde\varphi_j\}\sieq \{\varphi_i,\varphi_j\},\quad 3\leq i,j\leq d
\]
hence
\begin{equation}
\label{eq:kankei:3}
\{\tilde\varphi_i,\tilde\varphi_j\}\sieq 0,\quad 3\leq i\leq d, \;\;r+1\leq j\leq d.
\end{equation}
Set $\Xi_0=\xi_0$, $X_0=x_0$, $X_1=x_1$ and $\Xi_1=\xi_1-\tilde\psi(x',\xi'')$ which satisfy the commutation relations and $d\Xi_0$, $dX_0$, $dX_1$, $d\Xi_1$, $\Sigma_{j=0}^n\xi_jdx_j$ are linearly independent at $\bro$ hence extends to a homogeneous symplectic coordinates system $(X,\Xi)$ (\cite[Theorem 21.1.9]{MR781536}).  Since $\{\xi_0, \tilde\varphi_j\}=0$ and $\{x_0,\tilde\varphi_j\}=0$ writing $\tilde\Phi_j=\tilde\varphi_j$ for $3\leq j\leq d$ we have $\{\Xi_0,\tilde\Phi_j\}=0$ and $\{X_0,\tilde\Phi_j\}=0$ so that $\tilde\Phi_j=\tilde\Phi_j(X',\Xi')$. Now $\Sigma'$ is given by
\begin{gather*}
\Sigma'=\{X_0-X_1=0, \Xi_1=0, \hat\Phi_j(X',\Xi'')=0, 3\leq j\leq d\},\\
 \hat\Phi_j(X', \Xi'')=\tilde\Phi_j(X', 0,\Xi'').
\end{gather*}
Denote $
\Sigma''=\{\hat\Phi_j(X', \Xi'')=0, 3\leq j\leq d\}$ 
and show that $\Sigma''$ is cylindrical in the $X_1$ direction. Write
\begin{equation}
\label{eq:kankei:5}
\tilde\Phi_j(X',\Xi')=\hat\Phi_j(X', \Xi'')+\Xi_1f
\end{equation}
then we have
\[
\dif_{X_1}\hat\Phi_j(X',\Xi'')=\{\Xi_1, \hat\Phi_j\}=\{\Xi_1, \tilde\Phi_j-\Xi_1f\}\sieq 0
\]
because of \eqref{eq:kankei:2-1} and hence
\[
\dif_{X_1}\hat\Phi_j(X',\Xi'')\sieqb \Xi_1h_1+(X_0-X_1)h_2.
\]
Noting that the left-hand side contains neither $\Xi_1$ nor $X_0$ we conclude that
\[
\dif_{X_1}\hat\Phi_j(X',\Xi'')\sieqb 0,\quad 3\leq j\leq d
\]
which proves that $\Sigma''$ is cylindrical in the $X_1$ direction and hence
\[
\Sigma''=\{\hat\Phi_j(0,X'',\Xi'')=0, 3\leq j\leq d\}.
\]
Denote $\tilde\Sigma=\{\hat\Phi_j(0,X'',\Xi'')=0, 3\leq j\leq r\}$. 
Since the restriction of the symplectic form to $\tilde\Sigma$ has constant rank $r-2$ in a neighborhood of $\bro$. Indeed 
\[
{\rm det}(\{\tilde\varphi_i,\tilde\varphi_j\})_{3\leq i, j\leq r}(\bro)={\rm det}(\{\phi_i,\phi_j\})_{3\leq i,j\leq r}(\bro)\neq 0
\]
implies ${\rm rank}(\{\tilde\varphi_i,\tilde\varphi_j\})_{3\leq i, j\leq r}(\bro)=r-2$ on $\tilde\Sigma$. 
Thanks to \cite[Theorem 21.2.4]{MR781536},  there are homogeneous symplectic coordinates $X'', \Xi''$ such that, denoting $\hat\Phi_j(0,X'',\Xi'')$ by $\psi_j(X'',\Xi'')$, $3\leq j\leq d$ in these new symplectic cordinates $(X'',\Xi'')$, we have
\[
\tilde\Sigma=\{\psi_j=0, 3\leq j\leq r\}=\{X_2=\cdots=X_{l}=\Xi_2=\cdots=\Xi_{l}=0\},\quad r=2l
\]
so that $\Sigma'$ is given by
\begin{gather*}
\{X_0-X_1=0,\Xi_1=0,X_2=\cdots=X_{l}=\Xi_2=\cdots=\Xi_{l}=0, 
\psi_j(0, \tilde X,0,\tilde\Xi)\\
=0, r+1\leq j\leq d\},\quad \tilde X=(X_{l+1},\ldots,X_n),\;\tilde\Xi=(\Xi_{l+1},\ldots,\Xi_n).
\end{gather*}
Here we note that
\begin{equation}
\label{eq:kankei:add}
\{X_i, \psi_j\}\sieq 0,\quad \{\Xi_i,\psi_j\}\sieq 0,\quad 2\leq i\leq l,\;\;r+1\leq j\leq d.
\end{equation}
To prove this we first show
\begin{equation}
\label{eq:kankei:6}
\{\psi_i,\psi_j\}\sieq 0,\quad 3\leq i\leq d,\;\;r+1\leq j\leq d.
\end{equation}
 Note that $\dif_{X_1}\tilde\Phi_j(X',\Xi')\sieq 0$ for $3\leq j\leq d$ because $\hat\Phi_j(X',\Xi'')=0$ on $\Sigma''$ hence are linear combinations of $\hat\Phi_j(0,X'',\Xi'')$, $3\leq j\leq d$ and \eqref{eq:kankei:5}. Therefore we have
\begin{equation}
\label{eq:kankei:8}
\{\hat\Phi_i,\hat\Phi_j\}=\{\tilde\Phi_i+\Xi_1f_i,\tilde\Phi_j+\Xi_1f_j\}\sieq 0,\quad 3\leq i\leq d,\;\;r+1\leq j\leq d
\end{equation}
for $\{\tilde\Phi_i,\tilde\Phi_j\}\sieq 0$ by \eqref{eq:kankei:3}. Since 
\[
\{\hat\Phi_i(0,X'',\Xi''),\hat\Phi_j(0,X'',\Xi'')\}\sieq \{\hat\Phi_i+X_1g_i,\hat\Phi_j+X_1g_j\}\sieq \{\hat\Phi_i,\hat\Phi_j\}
\]
we get \eqref{eq:kankei:6}. Since $X_i$, $\Xi_i$, $2\leq i\leq l$ are linear combinations of $\psi_j(X'',\Xi'')$, $3\leq j\leq r$ we get the assertion.

Denote $\tilde\Sigma'=\{\tilde\psi_j(\tilde X,\tilde\Xi)=\psi_j(0,\tilde X,0,\tilde\Xi)=0, r+1\leq j\leq d\}$. Write
\[
\psi_j=\tilde\psi_j(\tilde X,\tilde\Xi)+\Sigma_{k=2}^lc_{jk}X_k+\Sigma_{k=2}^lc'_{jk}\Xi_k,\quad r+1\leq j\leq d.
\]
It follows from \eqref{eq:kankei:add} that $c_{jk}\sieq 0$ and $c'_{jk}\sieq 0$ hence we have
\[
\tilde\psi_j=\psi_j+O^2(\Sigma'),\quad r+1\leq j\leq d
\]
which proves that $\{\tilde\psi_i,\tilde\psi_j\}\sieq 0,\quad r+1\leq i,j\leq d$. Since $\tilde\psi_j$, $r+1\leq j\leq d$ contains no $X_0,X_1,\ldots,X_l$ and $\Xi_1,\ldots,\Xi_l$ we conclude that
\[
\{\tilde\psi_i,\tilde\psi_j\}{\overset{\tilde\Sigma'}{=}} 0,\quad r+1\leq i,j\leq d.
\]
Since the restriction of the symplectic form to $\tilde\Sigma'$ has rank $0$  there are homogeneous symplectic coordinates $\tilde X, \tilde\Xi$ such that $\tilde\Sigma'$ is given by (thanks to \cite[Theorem 21.2.4]{MR781536})
\[
\Xi_{l+1}=\cdots=\Xi_{d-l}=0.
\]
So there exist homogeneous symplectic coordinates $(X,\Xi)$ leaving $X_0, \Xi_0$ unchanged such that $\Sigma'$ is given by
\[
\{X_0-X_1=0,\Xi_1=0, X_2=\cdots=X_l=\Xi_2=\cdots=\Xi_l=0, \Xi_{l+1}=\cdots=\Xi_{d-l}=0\}.
\]
Let $\theta(x,\xi')\in C^{\infty}(\Sigma')$. Write $\theta(x,\xi')=\Theta(X,\Xi')$. Define the extension $\tilde\Theta(X,\Xi')$ of $\Theta(X,\Xi')$ outside $\Sigma'$ to a neighborhood of $\bar\varrho$ ($\bro\leftrightarrow \bar\varrho$) by
\begin{gather*}
\tilde\Theta(X,\Xi')=\Theta(X_0,X_0,0,\ldots,0, X_{l+1},\dots,X_n,0,\ldots,0,\Xi_{d-l+1},\ldots, \Xi_n).
\end{gather*}
It is clear that $\inf_{\Sigma'}\Theta\leq \tilde\Theta\leq \sup_{\Sigma'}\Theta$. Define the extension $\tilde\theta(x,\xi')$ of $\theta(x,\xi')$ outside $\Sigma'$ by
\[
\tilde\theta(x,\xi')=\tilde\Theta(X,\Xi').
\]
Since $\{\phi_2,\tilde\theta\}=\{\tilde e_2(X_0-X_1), \tilde \Theta\}=c_3(X_0-X_1)$ for $\{X_0-X_1,\tilde\Theta\}=\dif_{\Xi_1}\tilde\Theta=0$
thus we have
\[
\{\phi_2, \tilde\theta\}=c\,\phi_2.
\]
Note that $\phi_1$ is given by $\tilde e_1\Xi_1+\tilde f_1(X_0-X_1)$ then
\begin{gather*}
\{\phi_1,\tilde\theta\}=\{\tilde e_1\Xi_1+\tilde f_1(X_0-X_1), \tilde\Theta\}=\tilde c_1\Xi_1+\tilde c_2(X_0-X_1)=c_1\phi_1+c_2\phi_2
\end{gather*}
because $\{\Xi_1,\tilde\Theta\}=0$ which proves the assertion. 

Since $\{\Xi_j,\tilde\Theta\}=0$ ($1\leq j\leq l$) and $\{X_j,\tilde\Theta\}=0$ ($0\leq j\leq d-l$) and
\[
\hat\Phi_j(0,X'',\Xi'')=\Sigma_{k=2}^{l}a_{jk}\Xi_k+\Sigma_{k=2}^lb_{jk}X_k,\quad 3\leq j\leq r
\]
and $\hat\Phi_j=\hat\Phi_j(0,X'',\Xi'')+X_1g_j$ then noting $d-l\geq l$ we have $
\{\hat\Phi_j,\tilde\Theta\}\sieq 0$, $3\leq j\leq r$. 
Recalling $\tilde\Phi_j(X',\Xi')=\hat\Phi_j(X', \Xi'')+\Xi_1f_j$ we have $\{\tilde\Phi_j,\tilde\Theta\}\sieq 0$, $3\leq j\leq r$. Therefore we have
\[
\{\tilde\varphi_j,\tilde\theta\}\sieq 0,\quad 3\leq j\leq r.
\]
Since $\phi_j=\tilde\varphi_j+(x_0-x_1)g_j$, $3\leq j\leq r$ we conclude that $\{\phi_j,\tilde\theta\}\sieq 0$ for $3\leq j\leq r$. 
\end{proof}
Let us take a look back at  the example \eqref{eq:rei:1};
\[
p=-(\xi_0+\phi_1)(\xi_0-\phi_1)+\theta(x)\phi_1^2+\phi_2^2,\quad \phi_1=\xi_1,\;\;\phi_2=(x_0+x_1)\xi_n.
\]
Denoting
\[
\tilde\theta(x_0, x'')=\theta(x_0,-x_0,x''),\quad x''=(x_2,\ldots,x_n)
\]
one can write $\theta(x)=\tilde \theta(x_0,x'')+(x_0+x_1)\al(x)$. Applying Lemma \ref{lem:kakikae}
we can write with $\tilde\phi_1=(1+\nu)\phi_1$ 
\begin{gather*}
p=-(\xi_0+\tilde\phi_1)(\xi_0-\tilde\phi_1)+\hat\theta\tilde\phi_1^2+\phi_2^2,\quad \hat\theta=(\theta-\nu^2-2\nu)/(1+\nu)^2.
\end{gather*}
Choosing $2\nu=(x_0+x_1)\al(x)$ hence $
\hat\theta=\tilde\theta+r$, $r=O((x_0+x_1)^2)$  it is clear that 
\[
\{\tilde\phi_1,\hat\theta\}=O((x_0+x_1)),\quad \{\phi_2, \hat\theta\}= 0.
\]

Next, we reconsider the example \eqref{eq:rei:2}. Noting that $\dif/\dif x_1-x_0x_2^{k-1}\dif/\dif x_2$ is transversal to $x_0+x_1-x_0x_2^{k-1}=0$ we extend $\theta|_{\Sigma}$ to a neighborhood of $(0,0,0)$ along the characteristic curves such that $\{\varphi_1,\theta\}=0$. On the other hand $\{\varphi_2,\theta\}=0$ is obvious.

\subsection{More about the extension}

Denote by $\tilde\theta$ the extension of $\theta$ given by Lemma \ref{lem:kakutyo} then we see that
\begin{equation}
\label{eq:ext:seisitu}
\theta_1=\theta_2\Longrightarrow \tilde\theta_1=\tilde\theta_2,\quad \widetilde{\theta_1+\theta_2}=\tilde\theta_1+\tilde\theta_2,\quad \widetilde{\theta_1\theta_2}=\tilde\theta_1\tilde\theta_2.
\end{equation}
Let $\theta\in C^{\infty}(\Sigma')$. Taking that $H_{\xi_0-\phi_1}$ is tangent to $\Sigma'$ into account assume 
\begin{equation}
\label{eq:2:bibun}
\theta(\bro)=0,\quad \{\xi_0-\phi_1,\theta\}(\bro)=0,\quad \{\xi_0-\phi_1,\{\xi_0-\phi_1,\theta\}\}\neq0\;\; {\rm at}\;\;\bro.
\end{equation}
Denote by $\tilde \theta$ the extension of $\theta$.  Choosing a symplectic coordinates system such that $\Xi_0=\xi_0-\phi_1$, $X_0=x_0$ and denoting $\Theta(X,\Xi')=\tilde\theta(x,\xi')$ we have $\dif_{X_0}^k\Theta (\bar\varrho)=0$, $k=0,1$ and $\dif_{X_0}^2\Theta (\bar\varrho)\neq 0$ then thanks to Malgrange preparation theorem one can write
\[
\Theta(X,\Xi')=E\big((X_0-\Psi(X',\Xi'))^2+G(X',\Xi')\big)=E(F^2+G),\quad E\neq 0,\;G\geq 0
\]
where $\dif_{X_0}F=\{\Xi_0, F\}=1$, $\dif_{X_0}G=\{\Xi_0, G\}=0$ and $\dif_{X_0}\Psi=\{\Xi_0, \Psi\}=0$. Turning back to the coordinates $(x,\xi)$ we have
\begin{gather*}
\tilde\theta= e( f(x,\xi')^2+ g(x,\xi')),\quad f(x,\xi')=x_0-\psi(x,\xi'),\\
 \{\xi_0-\phi_1,  f\}=1,\;\;\{\xi_0-\phi_1, g\}=0.
\end{gather*}
Denoting by $\tilde e, \tilde f,\tilde g$ the extensions of  $e|_{\Sigma'}, f|_{\Sigma'}, g|_{\Sigma'}$ respectively we have by \eqref{eq:ext:seisitu} 
\[
{\tilde\theta}=\tilde e(\tilde f^2+\tilde g)
\]
where $\tilde e$, $\tilde f$ and $\tilde g$ verify \eqref{eq:ext:1} and \eqref{eq:ext:2} and that 
\[
\tilde f=x_0-\tilde\psi,\quad \dif_{x_0}\tilde\psi(\bro)=0,\quad \{\xi_0,\tilde g\}\sieq 0,\quad \tilde g\geq 0,\quad \tilde e\neq 0
\]
since $\{\xi_0-\phi_1,\tilde f\}=1$ and $\{\tilde f,\phi_1\}\sieq 0$.
%
%
%
\

\section{Tangent bicharacteristics}
\label{tanbich}

Let $p$ be of normal form 
\[
p=-(\xi_0+\phi_1)(\xi_0-\phi_1)+\theta\phi_1^2+\Sigma_{j=2}^r\phi_j^2+\Sigma_{j=r+1}^d\phi_j^2
\]
and recall that $\theta|_{\Sigma}$ is conformally invariant. 
Let $\tilde\theta$ be an extension of $\theta|_{\Sigma}$ then we say $d\theta$, $d\phi_j$ are linearly independent (resp. dependent) at $\bro$ if $d\tilde\theta$, $d\phi_j$ are linearly independent (resp. dependent) at $\bro$. This is independent of the choice of extensions of $\theta|_{\Sigma}$. Denote
\[
\nu=\{\xi_0-\phi_1,\{\xi_0-\phi_1,\theta\}\}(\bro),\quad \kappa=\{\{\xi_0-\phi_1,\phi_2\},\phi_2\}(\bro)/\{\phi_1,\phi_2\}(\bro).
\]
Then we have
\begin{prop}
\label{pro:tang_bicha:main}
Assume that $p$ is of normal form up to a term $O^4(\Sigma')$ with $\theta(\bro)=0$ and
\begin{equation}
\label{eq:r+1:j:cond} 
\{\xi_0-\phi_1,\theta\}(\bro)=0,\quad \{\phi_j,\theta\}(\bro)= 0,\;\; r+1\leq j\leq d.
\end{equation}
If $\kappa^2-4\nu> 0$ there exists a bicharacteristic $\ga$ of $p$ tangent to $\Sigma$ at $\bro$. More precisely, parametrizing $\ga$ by $x_0$ with $\ga(0)=\bro$ we have $\theta(\ga)=O(x_0^2)$ and $\phi_j(\ga)=O(x_0^2)$ for $j=0,\ldots, d$ where $\lim_{x_0\to 0}\phi_j(\ga)/x_0^{1+j}\neq 0$, $j=1,2$.
\end{prop}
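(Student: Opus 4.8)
The strategy is to construct $\ga$ directly as an integral curve of $H_p$ parametrized by $x_0$, working in the privileged coordinates furnished by the normal form and the extension lemma. First I would exploit the structure theory already in place: by the extension lemma (Lemma \ref{lem:kakutyo}) and the Malgrange-type factorization at the end of Section \ref{exlem}, after a symplectic change of coordinates (leaving $x_0,\xi_0$ essentially as the time variable) one may take $\phi_2 = e_2(x_0-x_1)$, $\phi_1 = e_1(\xi_1 - \psi(x,\xi''))$ with $\dif_{x_0}\psi(\bro)=0$, $H_{\xi_0-\phi_1}$ tangent to $\Sigma'$, and $\theta = \tilde e(\tilde f^2 + \tilde g)$ with $\tilde f = x_0 - \tilde\psi$, $\dif_{x_0}\tilde\psi(\bro)=0$, $\tilde g\geq 0$, $\{\xi_0,\tilde g\}\sieq 0$. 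The hypothesis \eqref{eq:r+1:j:cond} and $\theta(\bro)=0$ then say that the Hamilton vector field of $p$ at $\bro$ is, to leading order, governed by the $2$-dimensional subsystem in $(\phi_1,\phi_2)$ together with the scalar $\theta$, the remaining $\phi_j$ ($3\le j\le d$) decoupling to order $O(\Sigma')$ because of \eqref{eq:form:1}, \eqref{eq:form:2} and \eqref{eq:r+1:j:cond}.

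The core is then a fixed-point / power-series argument. Parametrize a candidate curve $\ga(x_0)$ by $x_0$ with $\ga(0)=\bro$ and seek it in the form where $\xi_0 - \phi_1(\ga)$, $\phi_j(\ga)$ vanish to the stated orders. Compute $\frac{d}{dx_0}\phi_j(\ga) = \{\phi_j, p\}(\ga)/\dif_{\xi_0}p(\ga)$ along the flow; using $p=0$ on the bicharacteristic and $\dif_{\xi_0}p = -2\xi_0 + O(\Sigma)$, one gets a closed system of ODEs for the quantities $u_j := \phi_j(\ga)$ and $w := \theta(\ga)$. The key computation is the $2\times2$ block: differentiating $\xi_0 - \phi_1$ and $\phi_2$ twice in $x_0$ and using $\{\xi_0-\phi_1,\phi_j\}\sieq 0$, $\{\phi_1,\phi_2\}(\bro)\neq 0$, $\{\phi_2,\phi_j\}\sieq 0$, one finds the linearized system has the indicial/characteristic equation $\lambda^2 - \kappa\lambda + \nu = 0$ (this is exactly where $\kappa$ and $\nu$ enter), so the condition $\kappa^2 - 4\nu>0$ guarantees two distinct real roots $\lambda_\pm$, hence a genuine real (not oscillatory) solution branch with $\phi_1(\ga)\sim c_1 x_0^2$, $\phi_2(\ga)\sim c_2 x_0^3$ after matching the homogeneities — the shifts by $1$ and $2$ in the exponents $x_0^{1+j}$ coming from the fact that $H_{\xi_0-\phi_1}$ is tangent to $\Sigma'$ and $\phi_2$ carries the extra factor $(x_0-x_1)$. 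One then sets up a contraction on a suitable Banach space of curves $x_0\mapsto\ga(x_0)$ with the prescribed vanishing orders (or equivalently solves order by order for a formal series and invokes Borel summation / the stable manifold theorem for the rescaled vector field, using that $\lambda_+>0$ or the relevant root has the right sign to land on the center-stable manifold through $\bro$), to upgrade the formal solution to a genuine $C^\infty$ bicharacteristic.

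I expect the main obstacle to be the bookkeeping that shows the $O^4(\Sigma')$ error terms and the coupling to $\phi_j$, $3\le j\le d$, and $\tilde g$ do not spoil the indicial equation: one must verify that all these contributions are of strictly higher order in $x_0$ along the candidate curve, so that the leading $2\times 2$ behavior — and hence the discriminant $\kappa^2-4\nu$ — is genuinely what decides existence of a real tangent branch. Concretely this means tracking that $\tilde g(\ga) = O(x_0^4)$ (from $\tilde g\geq 0$, $\tilde g(\bro)=0$, $\{\xi_0,\tilde g\}\sieq 0$ plus \eqref{eq:r+1:j:cond}), that $\{\phi_i,\theta\}$ for $i=1,2$ contribute only through the already-accounted terms via \eqref{eq:ext:1}, and that the decoupled block $\phi_j(\ga)=O(x_0^2)$, $3\le j\le d$, is consistent. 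Once the hierarchy of orders is pinned down, the existence of $\ga$ with $\lim_{x_0\to0}\phi_j(\ga)/x_0^{1+j}\neq 0$ for $j=1,2$ and $\theta(\ga),\phi_j(\ga)=O(x_0^2)$ follows from the distinct-real-roots case of the standard node/saddle normal form for the rescaled Hamilton flow.
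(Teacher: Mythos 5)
Your overall strategy — rescale the flow variables by powers of a parameter, reduce the Hamilton system to a nonlinear ODE system, solve it as a formal series with a prescribed vanishing profile, and upgrade — is essentially the one the paper uses. But there are two concrete gaps in how you set it up.

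First, you invoke the Malgrange factorization $\theta = \tilde e(\tilde f^2 + \tilde g)$ with $\tilde g \ge 0$. In the paper that factorization is derived in Section \ref{exlem} \emph{only} under the extra hypothesis that $\theta$ is one-signed near $\bro$ (needed so that $G \ge 0$ in the preparation). Proposition \ref{pro:tang_bicha:main} does not assume a sign for $\theta$: $\nu$ may be positive, negative, or zero, and when $\nu < 0$ the function $\theta$ definitely changes sign near $\bro$. The paper's actual proof uses only a Taylor expansion of $\hat\theta$ (after normalizing $\mu \phi_1 \to \phi_1$ with $\mu = \sqrt{1+\theta}$) together with the extension lemma to kill the $\{\phi_j,\hat\theta\}$ brackets modulo $\Sigma'$, never the $\tilde f^2 + \tilde g$ structure. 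Your argument, as stated, only covers the signed case.

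Second, and more seriously, your identification of where $\kappa^2 - 4\nu > 0$ enters is not what actually happens. You claim it is the discriminant of a $2\times2$ characteristic polynomial $\lambda^2 - \kappa\lambda + \nu$ of a linearized block. In the paper the condition arises twice and in neither place is it that. (i) After the change of parameter $t = s^{-1}$ and the anisotropic rescaling \eqref{eq:okikae}, matching the constant terms gives a \emph{scalar} quadratic \eqref{eq:kosho:bis}, $1/\delta - \kappa b + \nu\delta b^2 = 0$, for the leading coefficient $b = \bar\phi_1$ of the formal series; its real-rootedness requires $\kappa^2 - 4\nu > 0$, and one must pick a specific real root $b \ne 0$. (ii) To then solve the tower of equations \eqref{eq:matform:a} for the higher coefficients $w_{ij}$ one must control the spectrum of the full linearization $A_I$ — a $5\times 5$ (resp. $4\times 4$ in the dependent case) block, Lemmas \ref{lem:koyuti} and \ref{lem:koyuti:dep}. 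These compute $|\lambda - A_I| = (\lambda-1)(\lambda+2)(\lambda+6)(\lambda^2+5\lambda+8-4\kappa\delta b)$, and the choice of root $b$ together with $\kappa^2-4\nu>0$ is what forces the remaining real eigenvalues to be negative, so that the $\log t$ terms and the nonresonant orders can be handled. Your ``contraction / stable manifold'' sketch implicitly needs exactly this eigenvalue information, but nothing in the $2\times2$ linearization picture produces it — the coupling through $\Theta$, $\Xi_0$, $X_0$ and the block $\Phi_3,\dots,\Phi_r$ is what yields the factor $\lambda^2 + 5\lambda + 8 - 4\kappa\delta b$, and this is where the bulk of the work lies. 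You would also need the case split the paper makes between $d\theta, d\phi_j$ linearly independent versus dependent, since the rescaling of $\hat\theta$ and the resulting matrix $A_I$ differ between \eqref{eq:A:I} and \eqref{eq:A:I:dep}.
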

\begin{cor}
\label{cor:tang_bicha:dep}
Assume that $p$ is of normal form up to a term $O^4(\Sigma')$ with $\theta(\bro)=0$. If $d\theta$, $d\phi_j$ are linearly dependent at $\bro$  and $\kappa^2-4\nu> 0$ there exists a bicharacteristic of $p$ tangent to $\Sigma$ at $\bro$.
\end{cor}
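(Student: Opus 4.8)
The plan is to derive Corollary \ref{cor:tang_bicha:dep} from Proposition \ref{pro:tang_bicha:main} by showing that the linear dependence of $d\theta$, $d\phi_j$ at $\bro$ (together with $\theta(\bro)=0$) forces the hypothesis \eqref{eq:r+1:j:cond}, after which the existence of the tangent bicharacteristic is immediate. So the first step is to write out what linear dependence means: there are constants $c$, $c_0,\ldots,c_d$, not all zero, with $c\,d\tilde\theta(\bro)+\sum_{j=0}^d c_j\,d\phi_j(\bro)=0$, where $\tilde\theta$ is any chosen extension of $\theta|_{\Sigma}$. One first checks $c\neq 0$: if $c=0$ then $\sum c_j\,d\phi_j(\bro)=0$, contradicting the linear independence of the $d\phi_j$ at $\bro$ (which is part of the normal-form setup, since $\Sigma$ has codimension $d+1$). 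Hence we may normalize $c=1$ and write $d\tilde\theta(\bro)=-\sum_{j=0}^d c_j\,d\phi_j(\bro)$, i.e. $\tilde\theta=-\sum c_j\phi_j+O^2(\Sigma')$ near $\bro$; in particular $\{\,\cdot\,,\tilde\theta\}(\bro)$ is a linear combination of the $\{\,\cdot\,,\phi_j\}(\bro)$.

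The second step is to exploit the normal-form commutation relations \eqref{eq:form:1}–\eqref{eq:form:2}. From $d\tilde\theta(\bro)=-\sum_j c_j d\phi_j(\bro)$ we get, for each $k$,
\begin{equation*}
\{\phi_k,\tilde\theta\}(\bro)=-\sum_{j=0}^d c_j\{\phi_k,\phi_j\}(\bro),\qquad \{\xi_0-\phi_1,\tilde\theta\}(\bro)=-\sum_{j=0}^d c_j\{\xi_0-\phi_1,\phi_j\}(\bro).
\end{equation*}
By \eqref{eq:form:1} the second sum vanishes identically, so $\{\xi_0-\phi_1,\theta\}(\bro)=0$. For the first displayed condition in \eqref{eq:r+1:j:cond} with $k=j_0\ge r+1$: by \eqref{eq:form:1} again, $\{\phi_{j_0},\phi_j\}(\bro)=0$ for all $j$ (all entries in row/column $j_0\ge r+1$ of the bracket matrix vanish on $\Sigma'$), so $\{\phi_{j_0},\tilde\theta\}(\bro)=0$. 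Since $\theta=\tilde\theta$ on $\Sigma'$ and we only need the values at $\bro\in\Sigma'$, this gives exactly \eqref{eq:r+1:j:cond}. Together with the standing hypothesis $\theta(\bro)=0$ and the normal-form-up-to-$O^4(\Sigma')$ assumption, all hypotheses of Proposition \ref{pro:tang_bicha:main} are met (the quantities $\nu$, $\kappa$ depend only on $\theta|_{\Sigma}$ and the $\phi_j$ near $\bro$, hence are unaffected), and the assumed inequality $\kappa^2-4\nu>0$ yields a bicharacteristic of $p$ tangent to $\Sigma$ at $\bro$.

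The only genuinely delicate point is making sure the argument is coordinate/extension independent: the statement of the corollary defines linear dependence of $d\theta$, $d\phi_j$ via an arbitrary extension $\tilde\theta$, and we must be sure that the vanishing brackets $\{\xi_0-\phi_1,\theta\}(\bro)$ and $\{\phi_j,\theta\}(\bro)$ appearing in \eqref{eq:r+1:j:cond} are likewise well defined on $\Sigma'$ — but since $H_{\xi_0-\phi_1}$ and $H_{\phi_j}$ for $j\ge r+1$ are tangent to $\Sigma'$ at $\bro$ (by \eqref{eq:form:1}, as already noted for $H_{\xi_0-\phi_1}$ in Section \ref{exlem}), these derivatives depend only on $\theta|_{\Sigma'}$, so everything is consistent and no ambiguity arises. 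I expect this invariance bookkeeping, rather than any computation, to be the main (and only) obstacle; the rest is a direct reduction to Proposition \ref{pro:tang_bicha:main}.
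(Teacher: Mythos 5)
Your proof is correct and it is essentially the same argument as the paper's — the paper's own proof is the single sentence ``If $d\theta$, $d\phi_j$ are linearly dependent at $\bro$ it is clear that \eqref{eq:r+1:j:cond} holds,'' and what you have written is exactly the expansion of that step (write $d\tilde\theta(\bro)$ as a combination of the $d\phi_j(\bro)$, use \eqref{eq:form:1} to kill the brackets with $\xi_0-\phi_1$ and with $\phi_j$ for $j\ge r+1$, then invoke Proposition \ref{pro:tang_bicha:main}), together with the observation that the relevant brackets are well defined because $H_{\xi_0-\phi_1}$ and $H_{\phi_j}$ ($j\ge r+1$) are tangent to $\Sigma$ at $\bro$ by \eqref{eq:form:1}.
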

\begin{proof}If $d\theta$, $d\phi_j$ are linearly dependent at $\bro$ it is clear that \eqref{eq:r+1:j:cond} holds.
\end{proof}
\begin{cor}
\label{cor:te:teiti} Assume that $p$ is of normal form up to a term $O^4(\Sigma')$ with $\theta(\bro)=0$ and that $\theta|_{\Sigma}\geq 0$ or $\theta|_{\Sigma}\leq 0$ near $\bro$. If $\{\xi_0-\phi_1,\{\xi_0-\phi_1,\theta\}\}(\bro)= 0$ and $\{\{\xi_0-\phi_1,\phi_2\},\phi_2\}(\bro)\neq  0$ there exists a bicharacteristic of $p$ tangent to $\Sigma$ at $\bro$. 
\end{cor}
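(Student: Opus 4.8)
The plan is to deduce the statement from Proposition~\ref{pro:tang_bicha:main}. For that it suffices to verify the two hypotheses of the proposition, namely that \eqref{eq:r+1:j:cond} holds at $\bro$ and that $\kappa^2-4\nu>0$. The input not yet used is the sign condition: since $\theta(\bro)=0$ and $\theta|_{\Sigma}\geq 0$ (or $\theta|_{\Sigma}\leq 0$) near $\bro$, the point $\bro$ is a local extremum of the $C^\infty$ function $\theta|_{\Sigma}$, so $d(\theta|_{\Sigma})(\bro)=0$. This is the source of the first-order vanishing of the Poisson brackets in \eqref{eq:r+1:j:cond}.

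First I would note that $H_{\xi_0-\phi_1}$ and $H_{\phi_j}$ for $r+1\leq j\leq d$ are tangent to $\Sigma$ at $\bro$: by \eqref{eq:form:1} one has $\{\xi_0-\phi_1,\phi_i\}\sieq 0$ for $0\leq i\leq d$ and $\{\phi_i,\phi_j\}\sieq 0$ for $0\leq i\leq d$, $j\geq r+1$, and since $\bro\in\Sigma\subset\Sigma'$ these brackets vanish at $\bro$; hence $H_{\xi_0-\phi_1}(\bro)$ and $H_{\phi_j}(\bro)$ annihilate $d\phi_0,\dots,d\phi_d$ at $\bro$, i.e.\ they lie in $T_{\bro}\Sigma$. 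Next, writing $\tilde\theta$ for an extension of $\theta|_{\Sigma'}$ as in Lemma~\ref{lem:kakutyo}, the numbers $\{\xi_0-\phi_1,\theta\}(\bro)$ and $\{\phi_j,\theta\}(\bro)$ are directional derivatives of $\tilde\theta$ at $\bro$ along $H_{\xi_0-\phi_1}(\bro)$ and $H_{\phi_j}(\bro)$; as these directions lie in $T_{\bro}\Sigma$ and $\tilde\theta$ restricts to $\theta$ on $\Sigma$, each such directional derivative equals the corresponding directional derivative of $\theta|_{\Sigma}$, which vanishes because $d(\theta|_{\Sigma})(\bro)=0$. This gives \eqref{eq:r+1:j:cond}. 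Finally, $\nu=\{\xi_0-\phi_1,\{\xi_0-\phi_1,\theta\}\}(\bro)=0$ by assumption, while $\kappa\neq 0$ since its numerator $\{\{\xi_0-\phi_1,\phi_2\},\phi_2\}(\bro)$ is nonzero by assumption and its denominator $\{\phi_1,\phi_2\}(\bro)$ is nonzero by \eqref{eq:form:2}; hence $\kappa^2-4\nu=\kappa^2>0$, and Proposition~\ref{pro:tang_bicha:main} produces a bicharacteristic of $p$ tangent to $\Sigma$ at $\bro$.

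The only delicate point — and essentially the only place anything could slip — is the bookkeeping in the middle step: one must check that the brackets appearing in \eqref{eq:r+1:j:cond} are independent of the chosen extension of $\theta|_{\Sigma'}$ (which follows from tangency of $H_{\xi_0-\phi_1}$ and the $H_{\phi_j}$ to $\Sigma'$ along $\Sigma'$, by \eqref{eq:form:1}) and that the tangency of those Hamilton fields at $\bro$ is exactly what converts the extremum relation $d(\theta|_{\Sigma})(\bro)=0$ into the required vanishing. Everything else is a direct citation of Proposition~\ref{pro:tang_bicha:main} with $\nu=0$, in the spirit of Corollary~\ref{cor:tang_bicha:dep}.
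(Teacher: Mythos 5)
Your proof is correct, and it pivots on the same key observation as the paper's: the extremum of $\theta|_\Sigma$ at $\bro$ forces the first-order Poisson brackets in \eqref{eq:r+1:j:cond} to vanish, and then Proposition~\ref{pro:tang_bicha:main} applies with $\nu=0$, $\kappa\neq 0$. The routes differ modestly. The paper invokes the extension lemma (Lemma~\ref{lem:kakutyo}, combined with Remark~\ref{rem:O:4}) to replace $\theta$ by a sign-definite extension in a full neighborhood of $\bro$, so that the full gradient $\nabla\theta(\bro)$ vanishes and hence $d\theta,\,d\phi_j$ are trivially linearly dependent, after which it cites Corollary~\ref{cor:tang_bicha:dep}. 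You bypass the extension lemma entirely: from \eqref{eq:form:1} you read off that $H_{\xi_0-\phi_1}(\bro)$ and $H_{\phi_j}(\bro)$, $r+1\leq j\leq d$, lie in $T_{\bro}\Sigma$, so the brackets in \eqref{eq:r+1:j:cond} are tangential derivatives that depend only on $\theta|_\Sigma$, and they vanish once $d(\theta|_\Sigma)(\bro)=0$. This is slightly more economical and makes the mechanism transparent; it also sidesteps having to verify that replacing $\theta$ by an $O(\Sigma')$-modification preserves the hypothesis $\nu=0$. One small tidy-up: since the $\theta$ appearing in the normal form \eqref{eq:p:O4} is already a smooth function in a full neighborhood of $\bro$, there is no need to bring in an auxiliary extension $\tilde\theta$ — the tangency argument applies directly to the given $\theta$.
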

\begin{proof}By extension Lemma we may assume $\theta\geq 0$ or $\theta\leq 0$ in a neighborhood of $\bro$ hence we have $\nabla \theta(\bro)=0$ showing that $d\theta$, $d\phi_j$ are linearly dependent at $\bro$. Thus one can apply Corollary \ref{cor:tang_bicha:dep} with $\nu= 0$ and $\kappa\neq 0$. 
\end{proof}
\begin{cor}
\label{cor:te:futeiti} Assume that $p$ is of normal form up to a term $O^4(\Sigma')$ with $\theta(\bro)=0$ and $\theta|_{\Sigma}\leq 0$ near $\bro$. Then there exists a bicharacteristic of $p$ tangent to $\Sigma$ at $\bro$ unless $\{\xi_0-\phi_1,\{\xi_0-\phi_1,\theta\}\}(\bro)= 0$ and $\{\{\xi_0-\phi_1,\phi_2\},\phi_2\}(\bro)= 0$.
\end{cor}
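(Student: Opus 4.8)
The strategy is to reduce Corollary \ref{cor:te:futeiti} to Corollary \ref{cor:te:teiti} together with Proposition \ref{pro:tang_bicha:main}, by exploiting the sign condition $\theta|_{\Sigma}\leq 0$. First I would invoke the extension Lemma \ref{lem:kakutyo} to arrange that the chosen extension $\tilde\theta$ also satisfies $\tilde\theta\leq 0$ in a full neighborhood of $\bro$; since $\theta(\bro)=0$ this forces $\nabla\tilde\theta(\bro)=0$, so that in particular $\{\xi_0-\phi_1,\theta\}(\bro)=0$ and $\{\phi_j,\theta\}(\bro)=0$ for all $j$, i.e. condition \eqref{eq:r+1:j:cond} holds automatically and $d\theta,d\phi_j$ are linearly dependent at $\bro$. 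Thus the only hypothesis of Proposition \ref{pro:tang_bicha:main} left to verify is the discriminant inequality $\kappa^2-4\nu>0$, where $\nu=\{\xi_0-\phi_1,\{\xi_0-\phi_1,\theta\}\}(\bro)$ and $\kappa=\{\{\xi_0-\phi_1,\phi_2\},\phi_2\}(\bro)/\{\phi_1,\phi_2\}(\bro)$.

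The key observation is a sign constraint on $\nu$ coming from $\tilde\theta\leq 0$. Since $H_{\xi_0-\phi_1}$ is tangent to $\Sigma'$ (by \eqref{eq:form:1}) and $\tilde\theta$ attains an interior maximum value $0$ along the integral curve of $H_{\xi_0-\phi_1}$ through $\bro$, the second derivative of $\tilde\theta$ along that curve at $\bro$ must be $\leq 0$; that is, $\nu\leq 0$. Now I split into two cases. If $\nu<0$, then $\kappa^2-4\nu\geq -4\nu>0$ regardless of the value of $\kappa$, and Proposition \ref{pro:tang_bicha:main} (applicable because \eqref{eq:r+1:j:cond} holds and $\theta$ is of normal form up to $O^4(\Sigma')$) produces the desired tangent bicharacteristic. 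If $\nu=0$, then $\kappa^2-4\nu=\kappa^2$, which is $>0$ precisely when $\kappa\neq 0$, i.e. when $\{\{\xi_0-\phi_1,\phi_2\},\phi_2\}(\bro)\neq 0$ (recall $\{\phi_1,\phi_2\}(\bro)\neq0$ by \eqref{eq:form:2}); in that subcase one applies Proposition \ref{pro:tang_bicha:main} again, or equivalently Corollary \ref{cor:te:teiti}. The remaining possibility, $\nu=0$ \emph{and} $\kappa=0$, is exactly the exceptional case $\{\xi_0-\phi_1,\{\xi_0-\phi_1,\theta\}\}(\bro)=0$ and $\{\{\xi_0-\phi_1,\phi_2\},\phi_2\}(\bro)=0$ excluded in the statement, so the claim follows.

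The step I expect to require the most care is the sign assertion $\nu\leq 0$: one must check that the extension $\tilde\theta$ furnished by Lemma \ref{lem:kakutyo} can indeed be taken $\leq 0$ near $\bro$ (the lemma only guarantees $\inf_{\Sigma'}\theta\leq\tilde\theta\leq\sup_{\Sigma'}\theta$, which under $\theta|_{\Sigma'}\leq0$ gives $\tilde\theta\leq 0$ — so this is immediate once one also checks $\theta|_{\Sigma'}\leq0$ follows from $\theta|_{\Sigma}\leq0$, using that $\theta$ is homogeneous of degree $0$ and $\Sigma'\cap V$ lies over $\Sigma$ after rescaling), and then that the quantity $\nu$ computed from $\theta$ agrees with the second derivative of $\tilde\theta$ along $H_{\xi_0-\phi_1}$; here one uses that modifications of $\theta$ by $O^2(\Sigma')$ terms (Remark \ref{rem:O:4}) and the normal-form ambiguity do not affect $\nu$, because $H_{\xi_0-\phi_1}$ is tangent to $\Sigma'$ and $\theta(\bro)=\{\xi_0-\phi_1,\theta\}(\bro)=0$. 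Once $\nu\leq0$ is secured, the case analysis above is routine and the corollary drops out of Proposition \ref{pro:tang_bicha:main}.
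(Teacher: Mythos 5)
Your proof is correct and follows essentially the same route as the paper's: observe that $\theta|_{\Sigma}\le 0$ and $\theta(\bro)=0$ force \eqref{eq:r+1:j:cond} and $\nu\le 0$, and then invoke Proposition \ref{pro:tang_bicha:main} (with Corollary \ref{cor:te:teiti} picking up the case $\nu=0$, $\kappa\neq 0$). You supply the details the paper leaves implicit — justifying $\nu\le 0$ by the second-derivative test on an extension $\tilde\theta\le 0$ and carrying out the case split explicitly — but the underlying idea is identical.
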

\begin{proof}Note that \eqref{eq:r+1:j:cond} holds and $\nu\leq 0$.  If $\{\xi_0-\phi_1,\{\xi_0-\phi_1,\theta\}\}(\bro)\neq 0$ we have $\nu<0$ hence Proposition \ref{pro:tang_bicha:main} proves the assertion.
\end{proof}
Assume that $p$ is of normal form up to a term $O^4(\Sigma')$;
\begin{equation}
\label{eq:p:O4}
p=-(\xi_0+\phi_1)(\xi_0-\phi_1)+\theta\phi_1^2+\Sigma_{j=2}^r\phi_j^2+\Sigma_{j=r+1}^d\phi_j^2+O^4(\Sigma')
\end{equation}
With $\mu=\sqrt{1+\theta}>0$ one can rewrite
\[
p=-(\xi_0-\mu \phi_1)(\xi_0+\mu\phi_1)+\Sigma_{j=2}^r\phi_j^2+\Sigma_{j=r+1}^d\phi_j^2+O^4(\Sigma')
\]
where denoting $\tilde\phi_1=\mu \phi_1$ one sees
\begin{gather*}
\{\xi_0-\tilde\phi_1, \phi_j\}=\{\xi_0-\phi_1+(1-\mu)\phi_1,\phi_j\}\sieq (1-\mu)\{\phi_1,\phi_j\}\\
\sieq -\theta/\mu(1+\mu)\{\tilde\phi_1,\phi_j\}\sieq \hat\theta\{\tilde\phi_1,\phi_j\},\quad 0\leq j\leq d
\end{gather*}
where we have set 
\begin{equation}
\label{eq:te:te}
\hat\theta=-\theta/(\sqrt{1+\theta}+1+\theta).
\end{equation}
%
Writing $\mu\phi_1\to\phi_1$ we have
\[
p=-(\xi_0-\phi_1)(\xi_0+\phi_1)+\Sigma_{j=2}^r\phi_j^2+\Sigma_{j=r+1}^d\phi_j^2+O^4(\Sigma').
\]
To prove Proposition \ref{pro:tang_bicha:main} we follow \cite[Chapter 3]{MR3726883}.


\subsection{Case that $d\theta$, $d\phi_j$ are linearly independent}
\label{sec:indep}

In this section, we assume that $d\phi_j$, $d\theta$ are linearly independent at $\bro$. Hence $\Sigma\cap\{\theta=0\}$ is a submanifold of $\Sigma$ passing through $\bro$ and that $p$ is effectively hyperbolic in the side where $\theta<0$  and non-effectively hyperbolic type I on the other side where $\theta>0$. 
Taking \eqref{eq:te:te} into account we can assume
\begin{equation}
\label{eq:j:r:bisbis}
\{\phi_j, \hat\theta\}=O(\Sigma'),\quad 1\leq j\leq r
\end{equation}
thanks to the extension lemma. Recall that the assumption \eqref{eq:r+1:j:cond} implies
\begin{equation}
\label{eq:lin:dep}
\{\xi_0-\phi_1,\hat\theta\}(\bro)=0,\quad \{\phi_j,\hat\theta\}(\bro)=0,\;\; r+1\leq j\leq d.
\end{equation}
Choose a system of symplectic coordinates $(X,\Xi)$ such that $X_0=x_0$ and $\Xi_0=\xi_0-\phi_1$. Writing $(X,\Xi)\to (x,\xi)$ one has 
\[
p=-\xi_0^2-2\xi_0\phi_1+\Sigma_{j=2}^r\phi_j^2+\Sigma_{j=r+1}^d\phi_j^2+O^4(\Sigma')
\]
where
\begin{gather}
\label{eq:kityaku:1}
\{\phi_i,\phi_j\}\sieq 0,\; 0\leq i\leq d,\;\;j\geq r+1,\; \{\xi_0,\phi_j\}\sieq \hat\theta\,\{\phi_1,\phi_j\},\; 1\leq j\leq d,\\
\label{eq:kityaku:2}
\{\phi_1,\phi_2\}(\bro)\neq 0,\quad \{\phi_2,\phi_j\}\sieq 0, \;\; 3\leq j\leq r,\quad {\rm det}(\{\phi_i,\phi_j\})_{3\leq i,j\leq r}\neq 0
\end{gather}
and by \eqref{eq:j:r:bisbis}, \eqref{eq:lin:dep}
\begin{equation}
\label{eq:kityaku:3}
\begin{split}
 \dif_{x_0}^k\hat\theta(\bro)=0,\;\; k=0,1,\quad  \dif_{x_0}^2\hat\theta(\bro)=-\dif_{x_0}^2\theta(\bro)/2=-\nu/2,\\
  \{\phi_j,\hat\theta\}(\bro)=0,\quad 1\leq j\leq d.
  \end{split}
\end{equation}
\begin{lem}
\label{lem:te:r+1:a}We have
\[
\{\phi_2,\{\phi_j,\xi_0\}\}(\bro)=0,\quad r+1\leq j\leq  d.
\]
\end{lem}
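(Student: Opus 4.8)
The claim is $\{\phi_2,\{\phi_j,\xi_0\}\}(\bro)=0$ for $r+1\le j\le d$. The starting point is the Jacobi identity
\[
\{\phi_2,\{\phi_j,\xi_0\}\}+\{\phi_j,\{\xi_0,\phi_2\}\}+\{\xi_0,\{\phi_2,\phi_j\}\}=0 .
\]
I would analyze the last two brackets at $\bro$ using the structure relations \eqref{eq:kityaku:1} and \eqref{eq:kityaku:2}. From \eqref{eq:kityaku:1} we have $\{\phi_2,\phi_j\}\sieq 0$ for $j\ge r+1$, so $\{\phi_2,\phi_j\}=\sum_{k\ge 1}a_k\phi_k$ for smooth $a_k$; applying $H_{\xi_0}$ and using again that $\{\xi_0,\phi_k\}\sieq\hat\theta\{\phi_1,\phi_k\}=O(\Sigma')$ (since $\hat\theta(\bro)$ and $\{\phi_j,\hat\theta\}(\bro)$ vanish only helps on $\Sigma'$; on $\Sigma$ the bracket $\{\xi_0,\phi_k\}$ itself need not vanish, but modulo $O(\Sigma')$ it equals $\hat\theta\{\phi_1,\phi_k\}$ which is $O(\Sigma')$ because $\hat\theta(\bro)=0$), I get $\{\xi_0,\{\phi_2,\phi_j\}\}(\bro)=0$. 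So the Jacobi identity reduces the claim to showing $\{\phi_j,\{\xi_0,\phi_2\}\}(\bro)=0$ for $j\ge r+1$.

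For the remaining term I would again use \eqref{eq:kityaku:1}: $\{\xi_0,\phi_2\}\sieq\hat\theta\{\phi_1,\phi_2\}$, i.e. $\{\xi_0,\phi_2\}=\hat\theta\{\phi_1,\phi_2\}+\sum_{k\ge1}b_k\phi_k$ with smooth $b_k$. Applying $H_{\phi_j}$ for $j\ge r+1$ and evaluating at $\bro$: the terms $b_k\phi_k$ contribute $\sum_k\{\phi_j,b_k\}\phi_k(\bro)+b_k\{\phi_j,\phi_k\}(\bro)=0$ since $\phi_k(\bro)=0$ and $\{\phi_j,\phi_k\}(\bro)=0$ by \eqref{eq:kityaku:1} (recall $j\ge r+1$). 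The term $\{\phi_j,\hat\theta\{\phi_1,\phi_2\}\}(\bro)=\{\phi_j,\hat\theta\}(\bro)\{\phi_1,\phi_2\}(\bro)+\hat\theta(\bro)\{\phi_j,\{\phi_1,\phi_2\}\}(\bro)$; the first summand vanishes because $\{\phi_j,\hat\theta\}(\bro)=0$ by \eqref{eq:kityaku:3}, and the second because $\hat\theta(\bro)=0$. Hence $\{\phi_j,\{\xi_0,\phi_2\}\}(\bro)=0$, and the Jacobi identity then gives the assertion.

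The main thing to be careful about — and what I expect to be the only real subtlety — is the bookkeeping of which relations hold exactly, which hold mod $O(\Sigma')$, and which hold only at the point $\bro$, since the conclusion is a pointwise identity at $\bro$ and several of the intermediate facts are congruences mod $O(\Sigma')$ (a function that is $O(\Sigma')$ need not vanish at $\bro$, but its bracket with something and its value must be handled via $\phi_k(\bro)=0$). In particular one must make sure that when an $O(\Sigma')$ term is differentiated it is again controlled at $\bro$ by the vanishing of the $\phi_k$ there and by \eqref{eq:kityaku:1}–\eqref{eq:kityaku:3}; the clean way to do this is to write every $O(\Sigma')$ quantity explicitly as $\sum_{k\ge1}c_k\phi_k$ and expand. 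No deeper input (no spectral theory, no symplectic normal form beyond what is already recorded) is needed; it is purely the Jacobi identity plus the commutation relations \eqref{eq:kityaku:1}, \eqref{eq:kityaku:2} and the vanishing statements \eqref{eq:kityaku:3} on $\hat\theta$.
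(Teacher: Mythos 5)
Your argument is correct and coincides with the paper's: both apply the Jacobi identity to the triple $(\phi_2,\phi_j,\xi_0)$ and show that $\{\xi_0,\{\phi_2,\phi_j\}\}(\bro)$ and $\{\phi_j,\{\xi_0,\phi_2\}\}(\bro)$ vanish using \eqref{eq:kityaku:1}--\eqref{eq:kityaku:3}. One small imprecision worth fixing: the parenthetical claim that ``$\hat\theta\{\phi_1,\phi_k\}$ is $O(\Sigma')$ because $\hat\theta(\bro)=0$'' is not correct as stated (vanishing at the single point $\bro$ does not give vanishing on $\Sigma'$), but the step you actually need is only $\hat\theta(\bro)\{\phi_1,\phi_k\}(\bro)=0$, which does hold.
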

\begin{proof}Note that $\{\xi_0,\{\phi_2,\phi_j\}\}(\bro)=0$ for $3\leq j\leq d$ by \eqref{eq:kityaku:1},   \eqref{eq:kityaku:2} and \eqref{eq:kityaku:3}. Since $
\{\phi_j, \{\xi_0,\phi_2\}\}(\bro)=\{\phi_j,\hat\theta\}(\bro)\{\phi_1,\phi_2\}(\bro)=0$ 
for $r+1\leq j\leq d$ by \eqref{eq:kityaku:1} and \eqref{eq:kityaku:3} then Jacobi's identity shows the assertion.
\end{proof}
From \eqref{eq:kityaku:1} we see that $d\hat\theta$, $dx_0, d\phi_j$, $0\leq j\leq d$ are linearly independent at $\bro$. Take 
\[
w=(\xi_0,x_0,\hat\theta, \phi_1,\ldots,\phi_d,\psi_1,\ldots,\psi_k)\quad (d+k=2n-1)
\]
to be a system of local coordinates around ${\bar\rho}$ so that $w(\bro)=0$. Note that we can assume that $\psi_j$ are independent of $x_0$ taking $\psi_j(0,x',\xi')$ as new $\psi_j$. 
Moreover we can assume that $\{\phi_i,\psi_j\}\sieq 0$ for $i=1,2$ taking $
\psi_j-\{\psi_j,\phi_2\}\phi_1/\{\phi_1,\phi_2\}-\{\psi_j,\phi_1\}\phi_2/\{\phi_2,\phi_1\}$ 
as new $\psi_j$. Thus it can be assumed that
\begin{equation}
\label{eq:psi:to:phi}
\{\xi_0,\psi_j\}\equiv 0,\quad \{\phi_i,\psi_j\}\sieq 0,\quad i=1,2,\;\;1\leq j\leq k.
\end{equation}
Let $\gamma(s)=(x(s),\xi(s))$ be a solution to the Hamilton equation 
\begin{equation}
\label{eq:Hamilton}
\frac{dx}{ds}=\frac{\dif p}{\dif \xi},\quad \frac{d\xi}{ds}=-\frac{\dif p}{\dif x}
\end{equation}
and recall $
df(\gamma(s))/ds=\{p,f\}(\gamma(s))$. 
We change the parameter from $s$ to $t$:
\[
t=s^{-1}
\]
so that we have $d/ds=-tD$ and $D=t(d/dt)$ and hence 
\[
\frac{d}{ds}(t^p F)=-t^{p+1}(DF+pF).
\]
 We now introduce new unknowns
\begin{equation}
\label{eq:okikae}
\begin{cases}
\xi_0(\gamma(s))=t^4\Xi_0(t),\;\;x_0(\gamma(s))=tX_0(t),\\
\hat\theta(\gamma(s))=t^2\Theta(t),\\
\phi_1(\gamma(s))=t^2\Phi_1(t),\;\;\phi_2(\gamma(s))=t^3\Phi_2(t),\\
\phi_j(\gamma(s))=t^4\Phi_j,\quad 3\leq j\leq r,\\
\phi_j(\gamma(s))=t^3\Phi_j(t),\quad r+1\leq j\leq d,\\
\psi_j(\gamma(s))=t^2\Psi_j(t),\quad 1\leq j\leq k
\end{cases}
\end{equation}
and write $
W=(\Xi_0,X_0, \Theta, \Phi_1,\ldots,\Phi_d,\Psi_1,\ldots,\Psi_k)$. 
In what follows $G(t, W)$, which may change from line to line,  denotes a smooth function in $(t, W)$ defined near $(0,0)$ such that $G(t,0)=0$. It is clear that 
\[
\{O^4(\Sigma'), \phi_j\}(\ga)=t^6G(t, W),\quad 0\leq j\leq d.
\]
Denote 
\begin{equation}
\label{eq:0toj}
 \{\xi_0, \phi_j\}=\hat\theta\{\phi_1,\phi_j\}+\sum_{i=1}^dC_i^j\phi_i, \;\;\kappa_j=C_1^j({\bar\rho}), \;\;\delta=\{\phi_1,\phi_2\}({\bar\rho})\neq 0
 \end{equation}
then from Lemma \ref{lem:te:r+1:a} we get
\begin{equation}
\label{eq:yakobi:2:bis}
\kappa_j=0,\quad j=r+1,\ldots,d.
\end{equation}
Here note that 
\begin{equation}
\label{eq:kazehiki}
\kappa_2=C_1^2({\bar\rho})=\{\{\xi_0,\phi_2\},\phi_2\}({\bar\rho)}/\{\phi_1,\phi_2\}({\bar\rho})
=\kappa.
\end{equation}
It is clear that
\begin{equation}
\label{eq:sesu:bb}
\{\xi_0,\phi_j\}(\ga)=\big(\delta_j\Theta+\kappa_j\Phi_1\big)t^2+t^3G(t, W),\quad \delta_j=\{\phi_1,\phi_j\}(\bro).
\end{equation}
Note that we have 
\[
\{\xi_0,\hat\theta\}(\ga)=-\nu X_0t/2+t^2G(t,W).
\]
Indeed write $\hat\theta(x,\xi')=\Sigma_{k=0}^2x_0^k\hat\theta_k(\tilde w)+O(x_0^3)$ where $w=(\xi_0,x_0,\tilde w)$ it follows from $\{\xi_0,\hat\theta\}(\bro)=0$ that $\hat\theta_1(\ga)=t^2G(t,W)$ which proves the assertion. 
The Hamilton equation is reduced to
\begin{equation}
\label{eq:modHamilton:indep}
\left\{\begin{array}{llllll}
D\Xi_0=-4\Xi_0+2\kappa_2\Phi_1\Phi_2+2\delta \Theta\Phi_2+tG(t,W),\\[3pt]
DX_0=-X_0+2\Phi_1+tG(t, W),\\[3pt]
D\Phi_1=-2\Phi_1+2\delta \Phi_2+tG(t, W),\\[3pt]
D\Theta=-2\Theta-\nu X_0\Phi_1+tG(t, W),\\[3pt]
D\Phi_2=-3\Phi_2+2\kappa_2\Phi_1^2+2\delta \Xi_0+2\delta\Phi_1\Theta+tG(t, W),\\[3pt]
tD\Phi_j=-4t\Phi_j+2\kappa_j\Phi_1^2+2\delta_j\Xi_0+2\delta_j\Phi_1\Theta\\[3pt]
\quad\quad \quad -2\Sigma_{k=3}^r\{\phi_k,\phi_j\}(\bro)\Phi_k+tG(t, W),\quad 3\leq j\leq r\\[3pt]
D\Phi_j=-3\Phi_j+tG(t, W),\;\;r+1\leq j\leq d,\\[3pt]
D\Psi_j=-2\Psi_j-2\sum_{k=r+1}^d\{\phi_k,\psi_j\}({\bar\rho})\Phi_k+tG(t, W),\;\;1\leq j\leq k.
\end{array}\right.
\end{equation}
We introduce a class of formal power series in $(t, \log{t})$ 
\[
{\mathcal E}=\{\sum_{0\leq j\leq i}t^i(\log t)^jw_{ij}\mid w_{ij}\in \C^N\}
\]
in which we look for a formal solution to the reduced Hamilton equation \eqref{eq:modHamilton:indep}.
\begin{lem}
\label{lem:formal}
There exists  $W=(\Xi_0,X_0,\Theta, \Phi_1, \ldots,\Phi_d,\Psi_1,\ldots,\Psi_k)\in {\mathcal E}$ such that 
$\Phi_1(0)\neq 0$ and $X_0(0)\neq 0$ satisfying \eqref{eq:modHamilton:indep}  formally.
\end{lem}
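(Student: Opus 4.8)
The plan is to construct the formal solution term by term in the filtration by powers of $t$, treating $\log t$ as a bookkeeping variable. First I would look at the leading order ($t^0$) part of the system \eqref{eq:modHamilton:indep}. Setting $t=0$ and writing $w^{(0)}_{ij}$ for the constant terms $w_{0j}$, the equations $tD\Phi_j=\cdots$ for $3\le j\le r$ force, at $t=0$, an algebraic relation expressing $\Phi_j(0)$ ($3\le j\le r$) in terms of $\Xi_0(0),\Phi_1(0),\Theta(0)$ via the matrix $(\{\phi_k,\phi_j\}(\bro))_{3\le k,j\le r}$, which is invertible by \eqref{eq:kityaku:2}. Substituting back, the remaining leading-order equations become $DW^{(0)}=MW^{(0)}+(\text{quadratic in }W^{(0)})$ for a matrix $M$ whose relevant block (acting on $\Xi_0,X_0,\Phi_1,\Theta,\Phi_2$) has the eigenvalue structure dictated by the diagonal coefficients $-4,-1,-2,-2,-3$ together with the off-diagonal couplings $2\delta$ and the $\kappa=\kappa_2$, $\nu$ terms. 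Since on $\mathcal E$ the operator $D=t\,d/dt$ acts on a monomial $t^i(\log t)^j$ by $t^i(\log t)^j\cdot i + t^i(\log t)^j\cdot$(lower log-degree), at order $t^0$ the operator $D$ is just multiplication by $0$, so the leading coefficients solve the \emph{algebraic} fixed-point system $0=MW^{(0)}+Q(W^{(0)})$. The key point is that $\kappa^2-4\nu>0$ is exactly the condition guaranteeing this nonlinear algebraic system has a solution with $\Phi_1(0)\neq0$ and $X_0(0)\neq0$: eliminating $\Xi_0(0),\Phi_2(0),\Theta(0),\Phi_j(0)$ reduces everything to a scalar quadratic in $\Phi_1(0)/X_0(0)$ (or a similar ratio) whose discriminant is a positive multiple of $\kappa^2-4\nu$; I would carry out this elimination explicitly to exhibit the nonzero solution, then set $X_0(0)$ by the equation $DX_0=-X_0+2\Phi_1+\cdots$, i.e. $0=-X_0(0)+2\Phi_1(0)$, giving $X_0(0)=2\Phi_1(0)\neq0$.

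Next I would set up the induction on the total degree $i$. Assume $W$ has been determined through order $t^{i-1}$ (all coefficients $w_{kj}$, $k\le i-1$, $j\le k$). Collecting the coefficient of $t^i(\log t)^j$ in \eqref{eq:modHamilton:indep} — using that $G(t,W)$ contributes, at order $t^i$, only things polynomial in the already-known lower-order coefficients plus \emph{linear} terms in the unknown order-$i$ coefficients — yields, for each fixed $j$ running downward from $j=i$ to $j=0$, a linear system of the shape
\[
(i\cdot \mathbbm{1}-M)\,w_{ij}=(\text{known data from orders }<i\text{ and from log-degree }>j).
\]
Here the matrix $M$ includes the linearization of the quadratic terms about $W^{(0)}$; the equations for $\Phi_j$, $3\le j\le r$, have an extra factor of $t$ in front of $D$, which simply shifts which order they feed into, but they remain solvable by the invertibility of $(\{\phi_k,\phi_j\}(\bro))_{3\le i,j\le r}$. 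The obstruction to solving is precisely whether $i$ is an eigenvalue of the (finite-dimensional) matrix $M_{\mathrm{lin}}$ obtained after eliminating the $\Phi_j$, $3\le j\le r$, and $\Phi_j$, $r+1\le j\le d$, and $\Psi_j$ blocks. Because $M_{\mathrm{lin}}$ has finitely many eigenvalues, there are only finitely many "resonant" positive integers $i$; at a resonant $i$ one solves the system modulo the cokernel and absorbs the unsolvable part by increasing the $\log t$ degree — this is exactly why the ansatz space $\mathcal E$ allows $t^i(\log t)^j$ with $j\le i$, and why one solves in order of \emph{decreasing} $j$ for fixed $i$ (the top log-degree $j=i$ is never resonant since it would require data of log-degree $i+1$, which is empty, and each lowering of $j$ kills one Jordan obstruction).

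The main obstacle I expect is bookkeeping the resonances cleanly: verifying that the coupled structure — the cascade $X_0\!\to\!\Phi_1\!\to\!\Phi_2\!\to\!\Xi_0$ with weights $1,2,3,4$, plus the feedback through $\Theta$ with the $-\nu X_0\Phi_1$ term and through $\Phi_2$ with the $2\kappa_2\Phi_1^2$ term — produces a linearized matrix $M_{\mathrm{lin}}$ all of whose eigenvalues are rational (in fact, under $\kappa^2-4\nu>0$, of a controlled form), so that the list of resonant integers is finite and the $\log$-degree bound $j\le i$ in $\mathcal E$ is never exceeded. Concretely I would: (i) record $M_{\mathrm{lin}}$ as a $5\times 5$ matrix after the eliminations; (ii) show its spectrum lies in $\{1,2,3,4,\dots\}$ only at finitely many points (the characteristic polynomial factors because of the triangular-plus-coupling structure, with the coupling contributing the quadratic whose discriminant is the positive quantity $\kappa^2-4\nu$); (iii) run the downward-in-$j$, upward-in-$i$ recursion, noting the right-hand sides are polynomials in finitely many previously-constructed coefficients so the construction is well-defined at every step. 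This yields $W\in\mathcal E$ solving \eqref{eq:modHamilton:indep} formally with $\Phi_1(0)\neq0$ and $X_0(0)=2\Phi_1(0)\neq0$, as claimed.
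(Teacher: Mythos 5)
Your proposal follows essentially the same route as the paper: equate the constant-in-$t$ coefficients of \eqref{eq:modHamilton:indep} to get an algebraic fixed-point system (using the invertibility of $(\{\phi_k,\phi_j\}(\bro))_{3\le k,j\le r}$ from the $tD\Phi_j$ equations to eliminate $\Phi_3,\dots,\Phi_r$), reduce to the scalar quadratic $1/\delta-\kappa b+\nu\delta b^2=0$ in $b=\Phi_1(0)$ whose discriminant is $\kappa^2-4\nu>0$, set $X_0(0)=2\Phi_1(0)\ne0$, then run the recursion $(iH-A)w_{ij}=(\text{lower data})$ and handle resonances through the $\log t$ filtration — which is exactly what the paper does, with the eigenvalue bookkeeping isolated in Lemma \ref{lem:koyuti} and the remaining iteration deferred to \cite[Sections 3.3 and 3.4]{MR3726883}.
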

Assume that $W=(\Xi_0,X_0, \Theta, \Phi_1,\ldots,\Phi_d,\Psi_1,\ldots,\Psi_k)\in {\mathcal E}$ satisfies \eqref{eq:modHamilton:indep} formally. Denote
\begin{equation}
\label{eq:okuto}
\left\{\begin{array}{lllll}
X_0=\sum_{0\leq j\leq
i}t^i(\log{1/t})^jx^{0}_{ij},\;\;
\Xi_0=\sum_{0\leq
j\leq i}t^i(\log{1/t})^j\xi^{0}_{ij}\\
\Theta=\sum_{0\leq j\leq
i}t^i(\log{1/t})^j\theta_{ij},\\
\Phi_{\mu}=\sum_{0\leq j\leq
i}t^i(\log{1/t})^j\phi^{\mu}_{ij},\;\;\Psi_{\nu}=\sum_{0\leq j\leq
i}t^i(\log{1/t})^j\psi^{\nu}_{ij}
\end{array}\right.
\end{equation}
and $x^0_{00}=\bar x_0$, $\xi^0_{00}=\bar\xi_0$, $\theta_{00}=\bar\theta$, $\phi^{\mu}_{00}=\bar\phi_{\mu}$ and $\psi^{\nu}_{00}=\bar\psi_{\nu}$. Equating the constant terms of both sides of
\eqref{eq:modHamilton:indep} one has (except for the equations for $\Phi_j$ with $3\leq j\leq r$)
\begin{eqnarray*}
-4\bar\xi_{0}+2\kappa\bar\phi_{1}\bar\phi_{2}+2\delta \bar\theta\bar\phi_{2}=0,\quad-\bar x_{0}+2\bar\phi_{1}=0,
\quad
-2\bar\phi_{1}+2\delta \bar\phi_{2}=0,\\
-2\bar\theta-\nu \bar x_0\bar \phi_1=0,\quad -3\bar\phi_{2}+2\kappa
\bar\phi_{1}^2+2\delta\bar\xi_0+2\delta \bar\theta \bar\phi_{1}=0,\\
-3\bar\phi_j=0,\;\;r+1\leq j\leq d,\quad -2\bar\psi_j-2\Sigma_{k=r+1}^d\{\phi_k,\psi_j\}(\bro)\bar\phi_k=0.
\end{eqnarray*}
From the third line one has $\bar\phi_j=0$, $r+1\leq j\leq d$ and $\bar\psi_j=0$. 
Setting $b=\bar\phi_{1}$ we see 
\[
\bar x_{0}=2b,\quad\bar\phi_{2}=\delta^{-1}b,\quad \bar\theta=-\nu b^2,\quad 
2\bar\xi_{0}=\kappa \delta^{-1}b^2-\nu b^3
\]
 hence the second equation on the second line becomes
\begin{equation}
\label{eq:b:kimeru}
-3\delta^{-1}b+3\kappa b^2- 3\nu\delta  b^3=3b\big(-1/\delta+\kappa b -  \nu\delta  b^2\big)=0.
\end{equation}
Note that $\bar\phi_j$, $3\leq j\leq r$ are uniquely determined since ${\rm det}(\{\phi_i,\phi_j\}(\bro))_{3\leq i,j\leq r}\neq 0$. Let us study 
\begin{equation}
\label{eq:kosho:bis}
1/\delta-\kappa b + \nu \delta b^2=0.
\end{equation}
Since $\delta\neq 0$ we see that \eqref{eq:kosho:bis} has a nonzero real root $b=b(\kappa,\de)\neq 0$ if 
\[
\kappa^2-4\nu>0.
\]
Let us choose one of such $b\neq 0$ when $\nu<0$ and if $\nu>0$ we choose $b\neq 0$ such that $\delta\kappa b$ is smaller. Denote 
\[
\overline{W}=(\bar\xi_0,\bar x_0,\bar\theta, \bar\phi_j,\bar\psi_i)
\]
and look for a formal solution to \eqref{eq:modHamilton:indep} of the form $\overline{W}+W$ with $W\in {\mathcal E}^{\#}$ where 
\[
{\mathcal E}^{\#}=\{W=\sum_{1\leq i,0\leq j\leq i}t^i(\log t)^jw_{ij}\mid w_{ij}\in \C^{2n+2}\}.
\]
 To simplify notation we set
\[
\left\{\begin{array}{ll}W^I=(X_0,\Phi_2,\Xi_0,\Phi_1,\Theta),\quad W^{II}=(\Phi_{3},\ldots,\Phi_r)\\
W^{III}=(\Phi_{r+1},\ldots,\Phi_d),\quad W^{IV}=(\Psi_1,\ldots,\Psi_k)
\end{array}\right.
\]
then $W={^t}(W^I, W^{II}, W^{III}, W^{IV})=\sum_{1\leq i,0\leq j\leq i}t^i(\log t)^jw_{ij}$ satisfies
\begin{equation}
\label{eq:matform:a}
H\big(iw_{ij}-(j+1)w_{ij+1}\big)=Aw_{ij}+\delta_{i1}\delta_{j0}F+G_{ij}
\end{equation}
with $H=I\oplus O\oplus I\oplus I$ where $I$ and $O$ is the identity and zero matrix respectively and
\begin{equation}
\label{eq:A:form}
A=\left[\begin{array}{cccc}A_I&O&O&O\\
B_{II}&A_{II}&O&O\\
O&O&-3I&O\\
O&O&B_{III}&-2I
\end{array}\right].
\end{equation}
Moreover, $F$ is a constant vector and  
\[
G_{ij}=G_{ij}(w_{pq}\mid q\leq p\leq i-1),\quad G_{ij}=0,\;\;i=0,1.
\]
Here note that $|\la-A|=|\la-A_I||\la-A_{II}||\la+3I||\la+2I|$ and all eigenvalues of $A_{II}$ are pure imaginary. Making a more precise look on
$A_I$ we see 
\begin{equation}
\label{eq:A:I}
A_I=\left[\begin{array}{ccccc}
-1&0&0&2&0\\
0&-3&2\delta&2(\kappa b+\delta^{-1})&2\delta b\\
0&2\delta^{-1}&-4&2\kappa\delta^{-1}b&2b\\
0&2\delta&0&-2&0\\
-\nu b&0&0&-2\nu b&-2
\end{array}\right]
\end{equation}
where we have used \eqref{eq:kosho:bis}. To confirm that \eqref{eq:matform:a} can be solved successively we prove the following
\begin{lem}
\label{lem:koyuti} Assume $\kappa^2-4\nu> 0$. Then
$A_I$ has an eigenvalue $1$ and the other real eigenvalues are negative.
\end{lem}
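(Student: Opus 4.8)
The plan is to compute the characteristic polynomial of $A_I$ in closed form and to read off its roots. I would expand $\det(\lambda I-A_I)$ along the first column, whose only non-zero entries are the $(1,1)$-entry $-1$ and the $(5,1)$-entry $-\nu b$; the two resulting $4\times4$ cofactors are sparse and collapse after one or two further cofactor expansions along their zero-heavy rows. The only product that needs rewriting is $\nu\delta^2b^2$, which by \eqref{eq:kosho:bis} (multiplied through by $\delta$) equals $\delta\kappa b-1$. Setting $c=\delta\kappa b$ and using the elementary identity $\lambda^4+13\lambda^3+60\lambda^2+124\lambda+96=(\lambda+2)(\lambda+6)(\lambda^2+5\lambda+8)$, the computation should yield
\[
\det(\lambda I-A_I)=(\lambda-1)(\lambda+2)(\lambda+6)\bigl(\lambda^2+5\lambda+8-4c\bigr).
\]
The factor $(\lambda-1)$ gives at once the first assertion, that $1$ is an eigenvalue of $A_I$.

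For the second assertion the remaining eigenvalues are $-2$, $-6$, and the two roots $(-5\pm\sqrt{16c-7})/2$ of $\lambda^2+5\lambda+8-4c$. The values $-2$ and $-6$ are negative; if $16c-7\le0$ the last pair is not real (or equals the negative double root $-5/2$), so it contributes no real eigenvalue; and if $16c-7>0$, then $(-5-\sqrt{16c-7})/2<0$ automatically, while $(-5+\sqrt{16c-7})/2<0$ precisely when $\sqrt{16c-7}<5$, i.e.\ when $c<2$. Hence the lemma reduces to the inequality $c=\delta\kappa b<2$ for the value of $b$ fixed above.

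To establish $c<2$, multiply \eqref{eq:kosho:bis} by $\delta$ to obtain $c=1+\nu\delta^2b^2$, so that $c\le1<2$ whenever $\nu\le0$. When $\nu>0$, the hypothesis $\kappa^2-4\nu>0$ forces $\kappa\neq0$, hence $b=c/(\delta\kappa)$, and substituting this back into $c=1+\nu\delta^2b^2$ shows that $c$ solves $\nu c^2-\kappa^2c+\kappa^2=0$. As a function of $c$ the left side is an upward parabola (since $\nu>0$) with value $4\nu-\kappa^2=-(\kappa^2-4\nu)<0$ at $c=2$; its two real roots therefore lie on opposite sides of $2$, and the prescription fixing $b$ — the root that makes $\delta\kappa b$ smaller — selects the smaller of them, so $c<2$. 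This completes the argument.

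The only laborious ingredient is the $5\times5$ determinant; it is routine but must be carried out carefully, and the sole place where the hypothesis $\kappa^2-4\nu>0$ together with the choice of $b$ enters is the final inequality $c<2$. Everything after the displayed factorization is a short sign check.
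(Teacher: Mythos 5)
Your proposal is correct and follows essentially the same route as the paper: compute $\det(\lambda I-A_I)$, use $\nu\delta^2b^2=\delta\kappa b-1$ to factor out $(\lambda-1)(\lambda+2)(\lambda+6)$, and analyze the leftover quadratic $\lambda^2+5\lambda+8-4\delta\kappa b$ (the paper expands along the last row rather than the first column, but this is immaterial). The one genuine difference is the final inequality: the paper substitutes the explicit quadratic-formula expression for $b$ and tracks the sign of $\delta\kappa/|\delta|$, whereas you observe that $c=\delta\kappa b$ itself satisfies $\nu c^2-\kappa^2 c+\kappa^2=0$ and read off $c<2$ from the sign of this parabola at $c=2$, which is tidier and makes the role of the hypothesis $\kappa^2-4\nu>0$ and of the prescription for $b$ more transparent.
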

\begin{proof} 
Expand $|\la-A_I|$ with respect to the last row we see
\begin{gather*}
|\la-A_I|=(\la+1)(\la+2)\left|\begin{array}{ccc}
\la+3&-2\delta&-2(\kappa b+\delta^{-1})\\
-2\delta^{-1}&\la+4&-2\kappa\delta^{-1}b\\
-2\delta&0&\la+2
\end{array}\right|\\
-2 \nu b (\la+2)\left|
\begin{array}{ccc}
\la+3&-2\delta&-2\delta b\\
-2\delta b&\la+4&-2b\\
-2\delta&0&0
\end{array}\right|\\
=(\la+1)(\la+2)(\la+6)(\la^2+3\la-4\kappa \delta b)+8\nu \delta^2 b^2(\la+2)(\la+6)\\
=(\la-1)(\la+2)(\la+6)\big(\la^2+5\la+8-4\kappa\delta b\big)
\end{gather*}
where we have used $\nu \delta^2 b^2=\kappa b\delta-1$.
Write
\[
\la^2+5\la+8-4\kappa\delta b=(\la+4)(\la+1)-4\nu\delta^2b^2.
\]
Then it is clear that real roots of $(\la+4)(\la+1)=4\nu\delta^2b^2$, if exist, are less than or equal to $-1$ if $\nu\leq 0$. Consider the case $\nu>0$ so that $\kappa^2-4\nu> 0$  is satisfied. Note that the roots  $b$ of \eqref{eq:kosho:bis} are given by
\[
b=\frac{\kappa}{2\nu\delta}\pm \frac{\sqrt{\kappa^2-4\nu}}{2\nu|\delta|}
\]
from which we have
\[
(\la+4)(\la+1)=4\nu\delta^2b^2=4\kappa\delta b-4=4+\frac{\sqrt{\kappa^2-4\nu}\,(\sqrt{\kappa^2-4\nu}\pm \delta \kappa/|\delta|)}{\nu/2}.
\]
By our choice of $b$ the right-hand side is less than $4$. This proves the assertion in the case $\nu>0$. 
\end{proof}
The rest of the proof of Proposition \ref{pro:tang_bicha:main} is just the repetition of the arguments in \cite[Sections 3.3 and 3.4]{MR3726883}. 
\begin{remark}
\label{rem:a}\rm Assume $\nu=\{\xi_0,\{\xi_0,\theta\}\}(\bro)=0$, $\kappa\neq 0$ and that
\[
\theta\geq \Sigma_{i=1}^lk_i^2\;\; \;\text{or}\;\;-\theta\geq \Sigma_{i=1}^lk_i^2
\]
where $d\phi_j$, $dk_i$ are linearly independent at $\bro$. Since $\{\xi_0,\{\xi_0,\pm\theta-\Sigma_{i=1}^lk_i^2\}\}(\bro)\geq 0$ it follows that $\{\xi_0,k_i\}(\bro)=0$, $1\leq i\leq l$. From this we see easily that
\begin{gather*}
dk_i(\ga(t))/dt=-\{p,k_i\}(\ga(t))/t^2=O(t),\quad t\to 0
\end{gather*}
which proves that $k_i(\ga(x_0))=O(x_0^2)$.
Therefore the bicharacteristic $\ga(x_0)$ is tangent to the manifold $\Sigma_0=\Sigma\cap\{k_i=0, i=1,\ldots,l\}\supset \Sigma\cap\{\theta=0\}$.
\end{remark}
\begin{remark}
\label{rem:b}\rm Assume $\theta|_{\Sigma}\geq 0$ or $\theta|_{\Sigma}\leq 0$ near $\bro$ and $\nu=\{\xi_0,\{\xi_0,\theta\}\}(\bro)\neq 0$. We can assume $\theta\geq 0$ or $\theta\leq 0$ near $\bro$. Thanks to  Malgrange preparation theorem one can write
\[
\theta=e\big((x_0-\psi(x',\xi'))^2+h(x',\xi')\big),\quad h(x',\xi')\geq 0,\quad e(\bro)\neq 0.
\]
With $f=x_0-\psi(x',\xi')$ it is clear that $\{\theta=0\}\subset\{f=0\}$. Note that $df$, $d\phi_j$ are linearly independent at $\bro$ because $\{\xi_0,f\}\neq 0$. Assume  $\kappa^2-4\nu>0$ hence there is a bicharacteristic $\ga$ tangent to $\Sigma$ by Proposition \ref{pro:tang_bicha:main}. Note that
\begin{gather*}
df(\ga(t))/dt=-\{p,f\}(\ga(t))/t^2\to 2\Phi_1(0)\{\xi_0, 
f\}(\bro),\quad t\to 0
\end{gather*}
which proves that 
\begin{gather*}
\lim_{x_0\to 0}df(\ga(x_0))/dx_0=\lim_{t\to 0}\big(df(\ga(t))/dt\big)\big(dt/dx_0\big)\\
=2\Phi_1(0)\{\xi_0, 
f\}(\bro)/X_0(0)=\{\xi_0, 
f\}(\bro)=1.
\end{gather*}
Thus the bicharacteristic $\ga(x_0)$ is transversal to the manifold $\Sigma\cap \{f=0\}$.
\end{remark}
%


\subsection{Case that $d\theta$, $d\phi_j$ are linearly dependent}


We first note that $d\theta$, $d\phi_j$ are linearly dependent at $\bro$ then
\begin{equation}
\label{eq:lin:dep-1}
\{\xi_0-\phi_1,\theta\}(\bro)=0,\quad \{\phi_j,\theta\}(\bro)=0,\;\; r+1\leq j\leq d.
\end{equation}
Repeat the same arguments as in Section \ref{sec:indep}. Choose a system of symplectic coordinates $(X,\Xi)$ such that $X_0=x_0$ and $\Xi_0=\xi_0-\phi_1$. Writing $(X,\Xi)\to (x,\xi)$ one has 
\[
p=-\xi_0^2-2\xi_0\phi_1+\Sigma_{j=2}^r\phi_j^2+\Sigma_{j=r+1}^d\phi_j^2+O^4(\Sigma')
\]
where \eqref{eq:kityaku:1}, \eqref{eq:kityaku:2} and \eqref{eq:kityaku:3} hold.
From \eqref{eq:kityaku:1} we see that $dx_0, d\phi_j$, $0\leq j\leq d$ are linearly independent at $\bro$. Take 
\[
w=(\xi_0,x_0,\phi_1,\ldots,\phi_d,\psi_1,\ldots,\psi_k)\quad (d+k=2n)
\]
to be a system of local coordinates around ${\bar\rho}$ so that $w(\bro)=0$. Recall that we can assume that \eqref{eq:psi:to:phi} holds. 
Let $\gamma(s)=(x(s),\xi(s))$ be a solution to the Hamilton equation as before. We make the same change of parameter from $s$ to $t=s^{-1}$ and introduce new unknowns \eqref{eq:okikae} where $\hat\theta$ is absent now 
%
%
and write $W=(\Xi_0,X_0,\Phi_1,\ldots,\Phi_d,\Psi_1,\ldots,\Psi_k)$ and $G(t, W)$ being as before.
\begin{lem}
\label{lem:te:Tay}
One can write
\begin{gather*}
\hat\theta(x,\xi')=\ell(\phi_2,\ldots, \phi_d)-\nu x_0^2/4+\theta_2(w')
+\theta_3(w')
\end{gather*}
where $\ell $ is a linear form in $(\phi_2,\ldots, \phi_d)$ and $\theta_2$ is a quadratic form in $w'=(x_0,\phi_1,\ldots, \phi_d, \psi_1,\ldots,\psi_k)$ containing no such term  $c\,x_0^2$ $(c\in\R)$ and $\theta_3(w')=O(|w'|^3)$.
\end{lem}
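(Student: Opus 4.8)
The plan is to expand $\hat\theta$ in a Taylor series with respect to the local coordinates $w=(\xi_0,x_0,\phi_1,\ldots,\phi_d,\psi_1,\ldots,\psi_k)$ around $\bro$ and read off the stated structure by using the commutation relations \eqref{eq:kityaku:1} together with the vanishing data \eqref{eq:kityaku:3}. First I would observe that, by the extension lemma (cf. \eqref{eq:j:r:bisbis}), one may assume $\{\phi_j,\hat\theta\}\sieq 0$ for $1\le j\le r$, so in particular $\hat\theta$ does not depend on $\xi_0$ to the relevant order: writing $\{\xi_0,\hat\theta\}\sieq \hat\theta\{\phi_1,\hat\theta\}\sieq 0$ from the second relation in \eqref{eq:kityaku:1}, and recalling $\partial_{\xi_0}$ is (up to lower-order terms) the Hamiltonian direction paired with $x_0$, one sees the $\xi_0$-variable is effectively absent. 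Thus $\hat\theta = \hat\theta(x_0,\phi_1,\ldots,\phi_d,\psi_1,\ldots,\psi_k) = \hat\theta(x_0,w')$ modulo the irrelevant freedom, and we write its Taylor expansion at the origin as a sum of the constant term, the linear part, the quadratic part, and $O(|w'|^3 + x_0\,|w'|^2 + \cdots)$.

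Next I would pin down the low-order coefficients. The constant term vanishes because $\hat\theta(\bro)=0$ (recall $\hat\theta = -\theta/(\sqrt{1+\theta}+1+\theta)$ and $\theta(\bro)=0$, see \eqref{eq:te:te}). For the linear part: the coordinates available are $x_0$ and the $\phi_j$, $\psi_i$; by \eqref{eq:kityaku:3} we have $\{\phi_j,\hat\theta\}(\bro)=0$ for all $1\le j\le d$, and since $\{\xi_0-\phi_1,\hat\theta\}(\bro)=0$ gives $\partial_{x_0}\hat\theta(\bro)=0$, the only linear terms that can survive are those in $\phi_j$ for $j\ge 2$ (a term $c\,\phi_1$ would contribute $c\{\phi_k,\phi_1\}(\bro)\ne0$ to $\{\phi_k,\hat\theta\}(\bro)$ for $k=2$, contradicting \eqref{eq:kityaku:3}, while $\partial_{\psi_i}\hat\theta$-type contributions are controlled similarly using $\{\phi_1,\psi_i\}\sieq\{\phi_2,\psi_i\}\sieq 0$). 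This yields the linear form $\ell(\phi_2,\ldots,\phi_d)$. For the $x_0^2$-coefficient: from \eqref{eq:kityaku:3}, $\partial_{x_0}^2\hat\theta(\bro) = -\nu/2$, so the $x_0^2$-term of $\hat\theta$ is $-\tfrac{\nu}{4}x_0^2$. Everything else of degree $2$ is collected into $\theta_2(w')$ (a genuine quadratic form in $w'$ containing no $c\,x_0^2$ term, since that piece has been extracted), and the remainder is $\theta_3(w')=O(|w'|^3)$ together with the mixed terms $x_0\cdot(\text{deg}\ge1$ in $w')$, which I would absorb into $\theta_2$ and $\theta_3$ by treating $x_0$ as one of the coordinates in $w'$ as in the statement — one should be slightly careful here that the statement lists $x_0$ as a component of $w'$, so "quadratic form in $w'$ containing no such term $c\,x_0^2$" is exactly what remains after peeling off $-\tfrac\nu4 x_0^2$.

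The main obstacle, I expect, is bookkeeping rather than anything deep: one must be careful that the reductions via the extension lemma (replacing $\hat\theta$ by an extension with $\{\phi_j,\hat\theta\}\sieq 0$) do not alter $\hat\theta|_{\Sigma'}$ and hence preserve the normal form up to $O^2(\Sigma')$ (Remark \ref{rem:O:4}), and that the elimination of the $\xi_0$-dependence and of linear $\phi_1$- and $\psi_i$-terms is done consistently with the same choice of coordinates in which \eqref{eq:psi:to:phi} holds. Once those relations are in force, identifying the linear part as $\ell(\phi_2,\dots,\phi_d)$ and the $x_0^2$-coefficient as $-\nu/4$ is a direct matching of Taylor coefficients against \eqref{eq:kityaku:3}, and the decomposition $\hat\theta = \ell(\phi_2,\ldots,\phi_d) - \nu x_0^2/4 + \theta_2(w') + \theta_3(w')$ follows. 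I would close the proof by remarking that the linear form $\ell$ involves only $\phi_2,\ldots,\phi_d$ (not $\phi_1$, not $x_0$, not $\psi_i$) precisely because of \eqref{eq:kityaku:3} and the relations $\{\phi_1,\psi_i\}\sieq\{\phi_2,\psi_i\}\sieq 0$.
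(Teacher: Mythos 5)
Your proposal correctly handles the constant term, the exclusion of a linear $\phi_1$ term, the absence of a linear $x_0$ term, and the $-\nu x_0^2/4$ coefficient, all by matching Taylor coefficients against \eqref{eq:kityaku:3}. However, it has a genuine gap in the step that excludes linear $\psi_i$ terms. You claim these contributions ``are controlled similarly using $\{\phi_1,\psi_i\}\sieq\{\phi_2,\psi_i\}\sieq 0$'', but those relations give you \emph{no} constraint: if the linear part were $\sum_j c_j\phi_j + \sum_i b_i\psi_i$, then $\{\phi_k,\hat\theta\}(\bro)=0$ for $k=1,2$ contributes only the condition $c_1\{\phi_2,\phi_1\}(\bro)=0$ (forcing $c_1=0$), while the $b_i$ drop out precisely because $\{\phi_k,\psi_i\}\sieq 0$ for $k=1,2$. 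For $k\geq 3$ you have no control over $\{\phi_k,\psi_i\}(\bro)$, so the commutation relations alone cannot force $b_i=0$.

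The paper closes this gap with the \emph{standing hypothesis of Section 4.2}: in this subsection $d\theta$ (hence $d\hat\theta$, since $\hat\theta$ is a smooth function of $\theta$ vanishing at $\theta=0$) and $d\phi_j$ are assumed linearly dependent at $\bro$. That alone pins down the full one-jet: $d\hat\theta(\bro)=\sum_j c_jd\phi_j(\bro)$, so the linear part is automatically a combination of the $\phi_j$'s with no $x_0$ or $\psi_i$ contribution. After that, a single bracket $\{\phi_2,\hat\theta\}(\bro)=0$ kills the $\phi_1$ coefficient, and $\dif_{x_0}^2\hat\theta(\bro)=-\nu/2$ produces $-\nu x_0^2/4$. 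You omit this hypothesis entirely, which is exactly why your argument runs into trouble on the $\psi_i$'s. As a side remark, your digression on ``eliminating $\xi_0$-dependence'' via $\{\xi_0,\hat\theta\}\sieq 0$ is both unnecessary and muddled: $\hat\theta=\hat\theta(x,\xi')$ is independent of $\xi_0$ by construction, and the symplectic change with $X_0=x_0$, $\Xi_0=\xi_0-\phi_1$ preserves this since the remaining new coordinates are functions of the old $(x,\xi')$ only.
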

\begin{proof}
Since $\hat\theta(\bro)=0$ the Taylor formula gives $
\hat\theta(x,\xi')=\hat\theta_1(w')+\hat\theta_2(w')+O(|w'|^3)$ where $\hat\theta_1$ is a linear form in $(\phi_1,\ldots,\phi_d)$ for  $d\hat\theta$ and $d\phi_j$ are linearly dependent and $\hat\theta_2$ is a quadratic form in $w'$ . Since $\{\phi_2,\hat\theta\}(\bro)=0$ we see that $\hat\theta_1$ is independent of $\phi_1$. The rest of the proof is clear.
\end{proof}
Thanks to this lemma one has
\begin{equation}
\label{eq:te:hat}
\hat \theta(\ga)=-\nu X_0^2t^2/4+t^3G(t,W),\quad \hat\theta(\ga)=t^2G(t,W).
\end{equation}
Taking \eqref{eq:0toj} into account one has
\begin{equation}
\label{eq:sesu:b}
\{\xi_0,\phi_j\}(\ga)=(-\nu X_0^2\delta_j/4+\kappa_j\Phi_1)t^2+t^3G(t, W)
\end{equation}
where $\delta_j=\{\phi_1,\phi_j\}(\bro)$. Hence by \eqref{eq:kityaku:1} and \eqref{eq:yakobi:2:bis} we have
\begin{equation}
\label{eq:phi:xi:bis}
\{\phi_j,\xi_0\}(\ga)=t^3G(t, W),\quad r+1\leq j\leq d.
\end{equation}
Thanks to \eqref{eq:te:hat}, \eqref{eq:sesu:b} and \eqref{eq:phi:xi:bis}  the Hamilton equation is reduced to
\begin{equation}
\label{eq:modHamilton:dep}
\left\{\begin{array}{llllll}
D\Xi_0=-4\Xi_0+2\kappa\Phi_1\Phi_2-(\nu\delta/2) X^2_0\Phi_2+ tG(t,W),\\[3pt]
DX_0=-X_0+2\Phi_1+tG(t, W),\\[3pt]
D\Phi_1=-2\Phi_1+2\delta \Phi_2+tG(t, W),\\[3pt]
D\Phi_2=-3\Phi_2+2\kappa\Phi_1^2+2\delta \Xi_0-(\nu\delta/2) X_0^2 \Phi_1\\[3pt]
\quad\quad \quad+tG(t, W),\\[3pt]
tD\Phi_j=-4t\Phi_j+2\kappa_j\Phi_1^2+2\delta_j\Xi_0-(\nu\delta_j/2)X_0^2\Phi_1\\[3pt]
\quad\quad \quad -2\Sigma_{k=3}^r\{\phi_k,\phi_j\}(\bro)\Phi_k+tG(t, W),\quad 3\leq j\leq r\\[3pt]
D\Phi_j=-3\Phi_j+tG(t, W),\;\;r+1\leq j\leq d,\\[3pt]
D\Psi_j=-2\Psi_j-2\sum_{k=r+1}^d\{\phi_k,\psi_j\}({\bar\rho})\Phi_k+tG(t, W),\;\;1\leq j\leq k
\end{array}\right.
\end{equation}
(which is obtained from \eqref{eq:modHamilton:indep} by replacing $\Theta$ by $-\nu X_0^2/4$). We look for a formal solution to the reduced Hamilton equation \eqref{eq:modHamilton:dep}.
\begin{lem}
\label{lem:formal-1}
There exists a formal solution $W\in {\mathcal E}$ satisfying \eqref{eq:modHamilton:dep} with $\Phi_1(0)\neq 0$, $X_0(0)\neq 0$.
\end{lem}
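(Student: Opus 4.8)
The plan is to mirror the treatment of the linearly independent case (the setup leading to Lemma~\ref{lem:formal} and Lemma~\ref{lem:koyuti}), the only structural change being that $\Theta$ is no longer an independent unknown: by Lemma~\ref{lem:te:Tay}, via \eqref{eq:te:hat}, it is slaved to $X_0$ through $\hat\theta(\ga)=-\nu X_0^2t^2/4+t^3G(t,W)$, which is exactly why \eqref{eq:modHamilton:dep} is \eqref{eq:modHamilton:indep} with $\Theta$ replaced by $-\nu X_0^2/4$. As in Section~\ref{sec:indep} I work under $\kappa^2-4\nu>0$.

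First I would equate constant terms in \eqref{eq:modHamilton:dep}, writing the sought solution as $\overline W+W$ with $\overline W=(\bar\xi_0,\bar x_0,\bar\phi_j,\bar\psi_i)$ and $W\in{\mathcal E}^{\#}$ in the notation \eqref{eq:okuto}. Since $\bar x_0=2b$ gives $-\nu\bar x_0^2/4=-\nu b^2$ with $b=\bar\phi_1$ — the same value $\bar\theta$ took in Section~\ref{sec:indep} — one recovers verbatim $\bar x_0=2b$, $\bar\phi_2=\delta^{-1}b$, $2\bar\xi_0=\kappa\delta^{-1}b^2-\nu b^3$, $\bar\phi_j=0$ $(r+1\leq j\leq d)$, $\bar\psi_i=0$, together with the scalar equation \eqref{eq:kosho:bis}; since $\delta\neq0$ and $\kappa^2-4\nu>0$ this has a nonzero real root, and I would select $b\neq0$ exactly as in Section~\ref{sec:indep} (any such root if $\nu<0$, the one making $\delta\kappa b$ smaller if $\nu>0$). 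The remaining $\bar\phi_j$, $3\leq j\leq r$, are then uniquely determined by ${\rm det}(\{\phi_i,\phi_j\}(\bro))_{3\leq i,j\leq r}\neq0$, and already $\Phi_1(0)=b\neq0$, $X_0(0)=2b\neq0$.

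Next I would substitute $\overline W+W$ into \eqref{eq:modHamilton:dep} and equate coefficients of $t^i(\log t)^j$, obtaining a recursion of exactly the shape \eqref{eq:matform:a}, with $H$ of the same block form $I\oplus O\oplus I\oplus I$ and $A$ of the form \eqref{eq:A:form}: the blocks $A_{II}$ (skew-symmetric and, by \eqref{eq:kityaku:2}, nonsingular, hence with nonzero purely imaginary spectrum), $B_{II}$, $B_{III}$, $-3I$, $-2I$ are unchanged, while the leading block is now the $4\times4$ matrix $A_I'$ obtained by linearizing the first four equations of \eqref{eq:modHamilton:dep} at $\overline W$; concretely, $A_I'$ is $A_I$ of \eqref{eq:A:I} with $-\nu b$ times the $\Theta$-column added to the $X_0$-column (the linearization of the constraint $\Theta=-\nu X_0^2/4$) and the $\Theta$ row and column deleted.

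The key point, replacing Lemma~\ref{lem:koyuti}, is that $A_I'$ has the eigenvalue $1$ and all its other real eigenvalues negative; this is the step I expect to require the only genuine (and still routine) computation. I would compute $|\lambda-A_I'|$ and, using $\nu\delta^2b^2=\kappa\delta b-1$ from \eqref{eq:kosho:bis}, show it equals $(\lambda-1)(\lambda+6)(\lambda^2+5\lambda+8-4\kappa\delta b)$ — i.e. the factorization of Lemma~\ref{lem:koyuti} with the factor $(\lambda+2)$ deleted, which is natural since $(\lambda+2)$ came from the now-absent decoupled equation $D\Theta=-2\Theta-\nu X_0\Phi_1$. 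The quadratic factor $(\lambda+4)(\lambda+1)-4\nu\delta^2b^2$ is then handled word for word as in the proof of Lemma~\ref{lem:koyuti}: any real root of $(\lambda+4)(\lambda+1)=4\nu\delta^2b^2$ is $\leq-1$ when $\nu\leq0$, and when $\nu>0$ the choice of $b$ forces the right-hand side below $4$, whence the same conclusion; in particular $1$ is a simple eigenvalue of $A_I'$. Consequently $iI-A$ is invertible for every integer $i\geq2$, $I-A_I'$ has one-dimensional kernel, $A_{II}$ is invertible, and the rows for $W^{II}$ are algebraically solvable (the relevant matrix being the nonsingular $(\{\phi_i,\phi_j\}(\bro))_{3\leq i,j\leq r}$); hence \eqref{eq:matform:a} can be solved successively for the $w_{ij}$, the only compatibility check — at $i=1$, $j=0$, against $F$ — being dispatched exactly as in \cite[Sections 3.3 and 3.4]{MR3726883}. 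This produces the desired $W\in{\mathcal E}$, and $\Phi_1(0)\neq0$, $X_0(0)\neq0$ hold by construction.
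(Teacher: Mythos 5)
Your proposal is correct and follows essentially the same route as the paper: equating constant terms to recover $\bar x_0=2b$, $\bar\phi_2=\delta^{-1}b$, $2\bar\xi_0=\kappa\delta^{-1}b^2-\nu b^3$ and \eqref{eq:kosho:bis}, then reducing to the recursion \eqref{eq:matform:a} with the $4\times4$ block \eqref{eq:A:I:dep}, establishing via Lemma~\ref{lem:koyuti:dep} that its characteristic polynomial is $(\lambda-1)(\lambda+6)(\lambda^2+5\lambda+8-4\kappa\delta b)$, and deferring the successive solvability to \cite[Sections 3.3 and 3.4]{MR3726883}. Your structural remark that \eqref{eq:A:I:dep} is obtained from \eqref{eq:A:I} by adding $-\nu b$ times the $\Theta$-column to the $X_0$-column and then deleting the $\Theta$ row and column is a correct and helpful observation (it explains the disappearance of the $(\lambda+2)$ factor), though the paper simply computes the determinant directly.
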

Assume that $W=(\Xi_0,X_0,\Phi_1,\ldots,\Phi_d,\Psi_1,\ldots,\Psi_k)\in {\mathcal E}$ satisfies \eqref{eq:modHamilton:dep} formally. Denote $X_0$, $\Xi_0$, $\Phi_j$, $\Psi_j$ as \eqref{eq:okuto}
and $x^0_{00}=\bar x_0$, $\xi^0_{00}=\bar\xi_0$, $\phi^{\mu}_{00}=\bar\phi_{\mu}$ and $\psi^{\nu}_{00}=\bar\psi_{\nu}$. Equating the constant terms of both sides of
\eqref{eq:modHamilton:dep} one has 
\begin{eqnarray*}
-4\bar\xi_{0}+2\kappa\bar\phi_{1}\bar\phi_{2}-\nu\delta\bar x_{0}^2 \bar\phi_{2}/2=0,\quad-\bar x_{0}+2\bar\phi_{1}=0,\\
-2\bar\phi_{1}+2\delta \bar\phi_{2}=0,\;\;
-3\bar\phi_{2}+2\kappa
\bar\phi_{1}^2+2\delta\bar\xi_0-\nu\delta
\bar x_{0}^2\bar\phi_{1}/2=0,\\
-3\bar\phi_j=0,\;\;r+1\leq j\leq d,\quad -2\bar\psi_j-2\Sigma_{k=r+1}^d\{\phi_k,\psi_j\}(\bro)\bar\phi_k=0.
\end{eqnarray*}
From the third line one has $\bar\phi_j=0$, $r+1\leq j\leq d$ and $\bar\psi_j=0$. 
Setting $b=\bar\phi_{1}$ as before we see $\bar x_{0}=2b$, $\bar\phi_{2}=\delta^{-1}b$ and $
2\bar\xi_{0}=\kappa \delta^{-1}b^2-\nu b^3$ hence the second equation on the second line becomes
\[
-3\delta^{-1}b+3\kappa b^2- 3\nu\delta  b^3=3b\big(-1/\delta+\kappa b -  \nu\delta  b^2\big)=0
\]
which is the same as \eqref{eq:b:kimeru}. We choose the same $b\neq 0$ as in Section \ref{sec:indep}. 
Denote $
\overline{W}=(\bar\xi_0,\bar x_0,\bar\phi_j,\bar\psi_i)$ 
and look for a formal solution to \eqref{eq:modHamilton:dep} of the form $\overline{W}+W$ with $W\in {\mathcal E}^{\#}$. To simplify notation we set
\[
\left\{\begin{array}{ll}W^I=(X_0,\Phi_2,\Xi_0,\Phi_1),\quad W^{II}=(\Phi_{3},\ldots,\Phi_r)\\
W^{III}=(\Phi_{r+1},\ldots,\Phi_d),\quad W^{IV}=(\Phi_3,\ldots,\Phi_r)
\end{array}\right.
\]
then $W={^t}(W^I, W^{II}, W^{III}, W^{IV})=\sum_{1\leq i,0\leq j\leq i}t^i(\log t)^jw_{ij}$ satisfies \eqref{eq:matform:a}
with $A$ of the same form as \eqref{eq:A:form} where $A_I$ is replaced by
\begin{equation}
\label{eq:A:I:dep}
A_I=\left[\begin{array}{cccc}
-1&0&0&2\\
-2\nu\delta b^2&-3&2\delta&2(\kappa b+\delta^{-1})\\
-2\nu b^2&2\delta^{-1}&-4&2\kappa\delta^{-1}b\\
0&2\delta&0&-2\end{array}\right]
\end{equation}
where we have used \eqref{eq:kosho:bis}. To confirm that \eqref{eq:matform:a} can be solved successively we prove the following
\begin{lem}
\label{lem:koyuti:dep} Assume $\kappa^2-4\nu> 0$. Then
$A_I$ has an eigenvalue $1$ and the other real eigenvalues are negative.
\end{lem}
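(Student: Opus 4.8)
The plan is to imitate the proof of Lemma \ref{lem:koyuti} essentially word for word, the only structural change being that $A_I$ is now the $4\times 4$ matrix \eqref{eq:A:I:dep} instead of the $5\times 5$ matrix \eqref{eq:A:I}. First I would compute $|\lambda-A_I|$ by cofactor expansion along the last row of $\lambda-A_I$, which has only the two nonzero entries $-2\delta$ (in the $\Phi_2$ column) and $\lambda+2$ (on the diagonal); each of the two resulting $3\times 3$ minors is itself expanded along its first row, which again has only two nonzero entries, so the whole computation reduces to a short list of $2\times 2$ determinants. Throughout, whenever the factor $\nu$ appears paired with $\delta^2b^2$ I would replace it using the identity $\nu\delta^2b^2=\kappa\delta b-1$, which is just \eqref{eq:kosho:bis} multiplied by $\delta$, exactly as in the proof of Lemma \ref{lem:koyuti}. (Here $b\neq 0$ is the real root of \eqref{eq:kosho:bis} chosen earlier, which exists precisely because $\kappa^2-4\nu>0$.)

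I expect this to yield
\[
|\lambda-A_I|=(\lambda-1)(\lambda+6)\big(\lambda^2+5\lambda+8-4\kappa\delta b\big),
\]
that is, the very polynomial obtained in Lemma \ref{lem:koyuti} but with the factor $(\lambda+2)$ removed --- which is the natural effect of dropping the $\Theta$-equation and replacing $\Theta$ by $-\nu X_0^2/4$, as noted after \eqref{eq:modHamilton:dep}. Granting this factorization, the eigenvalue $1$ is immediate and the root $-6$ is negative, so only the quadratic factor remains. As in Lemma \ref{lem:koyuti} I would rewrite $\lambda^2+5\lambda+8-4\kappa\delta b=(\lambda+4)(\lambda+1)-4\nu\delta^2b^2$ (using \eqref{eq:kosho:bis} once more): if $\nu\le 0$ its real roots, if any, satisfy $(\lambda+4)(\lambda+1)\le 0$ and hence lie in $[-4,-1]$; if $\nu>0$ (so that $\kappa^2-4\nu>0$ is a real restriction) I would substitute the explicit roots $b=\kappa/(2\nu\delta)\pm\sqrt{\kappa^2-4\nu}/(2\nu|\delta|)$ of \eqref{eq:kosho:bis}, compute
\[
4\nu\delta^2b^2=4\kappa\delta b-4=4+\frac{\sqrt{\kappa^2-4\nu}\,\big(\sqrt{\kappa^2-4\nu}\pm\delta\kappa/|\delta|\big)}{\nu/2},
\]
and observe that the chosen $b$ --- the one making $\delta\kappa b$ smaller --- makes the right-hand side less than $4$, so $(\lambda+4)(\lambda+1)<4$ forces the real roots into $(-5,0)$. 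In all cases the real eigenvalues other than $1$ are negative.

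The only genuine computation is the determinant evaluation, and the single delicate point is the sign bookkeeping when the factor $-2\delta$ from the cofactor expansion is distributed over the terms carrying $\kappa b$, $\kappa b+\delta^{-1}$, and $\nu\delta b^2$, and when \eqref{eq:kosho:bis} is invoked to collapse $\nu\delta^2b^2$ into $\kappa\delta b-1$; a sign slip there would spoil the clean factorization. Once the factorization is in place, the rest of the argument is verbatim that of Lemma \ref{lem:koyuti}, and --- as already remarked --- the remainder of the proof of Proposition \ref{pro:tang_bicha:main} then goes through exactly as in \cite[Sections 3.3 and 3.4]{MR3726883}.
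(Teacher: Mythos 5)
Your proposal is essentially correct and matches the paper's argument: compute the characteristic polynomial by a short cofactor expansion, invoke $\nu\delta^2b^2=\kappa\delta b-1$ from \eqref{eq:kosho:bis} to collapse the $\nu$-terms, arrive at $(\lambda-1)(\lambda+6)(\lambda^2+5\lambda+8-4\kappa\delta b)$, and then reuse verbatim the analysis of the quadratic factor from Lemma~\ref{lem:koyuti}. The only cosmetic difference is the starting row (you expand along the last row as in Lemma~\ref{lem:koyuti}, the paper expands along the first row for \eqref{eq:A:I:dep}); both yield the same polynomial, and your observation that dropping the $\Theta$-equation exactly removes the $(\lambda+2)$ factor is accurate. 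One tiny slip: of the two $3\times 3$ minors in the last-row expansion, only the one in the $\Phi_2$-column has two nonzero entries in its first row; the diagonal one has a single nonzero entry $\lambda+1$ there, making it even simpler — this does not affect the argument.
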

\begin{proof} Expanding $|\la-A_I|$ with respect to the first row we see
\begin{gather*}
|\lambda-A_I|=(\la+1)\left|
\begin{array}{ccc}
\la+3&-2\delta&-2(\kappa b+\delta^{-1})\\
-2\delta^{-1}&\la+4&-2\kappa \delta^{-1} b\\
-2\delta&0&\la+2
\end{array}\right|\\
+2\nu b \left|
\begin{array}{ccc}
2\delta b&\la+3&-2\delta\\
2b&-2\delta^{-1}&\la+4\\
0&-2\delta &0
\end{array}\right|\\
=(\la+1)(\la+6)(\la^2+3\la-4\kappa \delta b)+8\nu \delta^2 b^2(\la+6)\\
=(\la-1)(\la+6)\big(\la^2+5\la+8-4\kappa\delta b\big)
\end{gather*}
the rest of the proof is just a repetition of the proof of Lemma \ref{lem:koyuti}.
\end{proof}
Let us give a simple example. Consider
\begin{equation}
\label{eq:rei:3}
p=-\xi_0^2+(\xi_1+x_0\xi_n)^2+x_1^2(1+x_1^k+\nu(x))\xi_n^2,\quad |x_1|\ll1,\nu(0)=0\;\;
\end{equation}
near $(0, e_n)$, $e_n=(0,\ldots,0,1)$ with $ k\in\N$ and $n\geq 3$. When $k=1$ and $\nu(x)=0$  this $p$ was studied in \cite{MR2438425}, \cite{MR3726883} as a typical example where there is no spectral transition but a tangent bicharacteristic. It is clear that the doubly characteristic set is given by
\[
\Sigma=\{\xi_0=\xi_1+x_0\xi_n=0, x_1=0\}.
\]
Denoting 
\begin{gather*}
\varphi_1=-x_1(1+x_1^k)\xi_n/(1+(1+k/2)x_1^k)=-x_1\al(x_1)\xi_n,\quad \al(0)=1 
\end{gather*}
one can write
\[
p=-(\xi_0+\varphi_1)(\xi_0-\varphi_1)+(\theta+\nu/\al^2)\varphi_1^2+\varphi_2^2,\quad \varphi_2=\xi_1+x_0\xi_n
\]
where 
\begin{equation}
\label{eq:rei:3:b}
\theta=(1+k)x_1^k+k^2x_1^{2k}(1+x_1^k)^{-1}/4
\end{equation}
which is of normal form. Indeed we have
\[
\{\xi_0-\varphi_1, \varphi_2\}=-k(1+k)x^k_1\be(x_1)\xi_n,\;\; \{\xi_0-\varphi_1,\xi_0\}\equiv 0,\;\; \{\xi_0-\varphi_1,\varphi_1\}\equiv 0
\]
where $\be(0)=1/2$. The transition behavior is determined by $\nu|_{\Sigma}$ because $\theta|_{\Sigma}=0$. Note that
%
%
%
%
%
\[
\{\xi_0-\varphi_1,\theta+\nu/\al^2\}=\dif_{x_0}\nu/\al^2,\quad \{\xi_0-\varphi_1,\{\xi_0-\varphi_1,\theta+\nu/\al^2\}\}=\dif_{x_0}^2\nu/\al^2.
\]
Let $k=1$ then  $\{\{\xi_0-\varphi_1,\varphi_2\},\varphi_2\}/\{\varphi_1,\varphi_2\}=1$ at $(0,e_n)$ hence if $\dif_{x_0}\nu(0)=0$ and $\dif_{x_0}^2\nu(0)<1/4$  one can apply Proposition \ref{pro:tang_bicha:main} to conclude the existence of tangent bicharacteristic. 
For the case $k\geq 2$ some remarks will be given at the end of the next section.


\section{Elementary factorization}
\label{elfac}

\subsection{Elementary factorization}

Consider the principal symbol of \eqref{eq:1}
\begin{equation}
\label{eq:kojaku:ee}
p(x,\xi)=-\xi_0^2+A_{1}(x,\xi')\xi_0+A_{2}(x,\xi')
\end{equation}
where $A_j(x,\xi')\in S^j_{1,0}$ depending smoothly in $x_0$. We start with the following definition.
\begin{definition}
\label{dfn:one}\rm
 We say that $p(x,\xi)$ admits a local elementary factorization\index{local elementary factorization} if there exist real valued $\lambda(x,\xi')$, $\mu(x,\xi')\in S^1_{1,0}$ and $0\leq Q(x,\xi')\in S^2_{1,0}$ such that 
\[
p(x,\xi)=-\Lambda(x,\xi)M(x,\xi)+Q(x,\xi')
\]
with $\Lambda(x,\xi)=\xi_0-\lambda(x,\xi')$ and $M(x,\xi)=\xi_0-\mu(x,\xi')$ verifying with some $C>0$ 
\begin{eqnarray}
\label{eq:LaMQ}
&|\{\Lambda(x,\xi),Q(x,\xi')\}|\leq CQ(x,\xi'),\\
\label{eq:LAMQbis}
&|\{\Lambda(x,\xi),M(x,\xi)\}|\leq C \big(\sqrt{Q(x,\xi')}+|\Lambda(x,\xi')-M(x,\xi')|\big).
\end{eqnarray}
If we can find such symbols defined in a conic neighborhood of $\rho$ we say that $p(x,\xi)$ admits a microlocal elementary factorization at $\rho$. 
\end{definition}
Of course, elementary factorization is closely related to the classical derivation of energy estimates. To see this note that
\begin{gather*}
-\La M=-(\xi_0-\la+ic)\#(\xi_0-\mu-ic)-i\{\La, M\}/2-ic(\La-M)+S^0_{1,0}\\
=-\tilde\La\#\tilde M-i(\{\La, M\}/2-c(\La-M))+S^0_{1,0}
\end{gather*}
where $c\in S^0_{1,0}$ and $
\tilde\La=\xi_0-\la+ic$, $\tilde M=\xi_0-\mu-ic$. We also note 
\begin{gather*}
2{\mathsf{Im}}(\op{p}v, \op{\tilde\La}v)=\frac{d}{dx_0}\big(\|\op{\tilde\La}v\|^2+(\op{Q}v,v)\big)\\
-2{\mathsf{Re}}(\op{\tilde\La}v, \op{\{\La, M\}/2+c(\La-M)}v)
-{\mathsf{Re}}(\op{\{\tilde\La, Q\}}v, v)/2
\end{gather*}
modulo $C(\|\op{\tilde\La}v\|^2+\|v\|^2)$. Assuming that one can find $c\in S^0_{1,0}$ such that $|\{\La, M\}/2+c(\La-M)|\lesssim \sqrt{Q}$ and taking $|\{\tilde\La,Q\}-\{\La,Q\}|\lesssim\sqrt{Q}$ into account we will obtain energy estimates.
\begin{lem}[\cite{zbMATH03625907}] 
\label{lem:fact:bicha} If $p$ admits a microlocal elementary factorization\index{microlocal elementary factorization} at $\rho$ there is no bicharacteristic tangent to $\Sigma$ at $\rho$.
\end{lem}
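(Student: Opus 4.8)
The plan is to argue by contradiction: suppose $p$ admits a microlocal elementary factorization $p=-\Lambda M+Q$ at $\rho$ in the sense of Definition \ref{dfn:one}, and suppose there is a null bicharacteristic $\ga(s)$ of $p$ with $\ga(0)=\rho$ that is tangent to $\Sigma$ at $\rho$. The first step is to observe that along $\ga$ we have $p(\ga)=0$, hence $\Lambda(\ga)M(\ga)=Q(\ga)\geq 0$ for all $s$. Since $\ga$ is a bicharacteristic of $p=-\Lambda M+Q$, and at a point where $\Lambda M=Q$ holds with $Q$ vanishing to second order where it vanishes, one computes $H_p=-M H_\Lambda-\Lambda H_M+H_Q$; at $\rho$, where all of $\Lambda,M,Q$ vanish (this is the content of $\rho\in\Sigma$ together with the factorization; note $\Lambda(\rho)=M(\rho)=0$ because $\xi_0=0$ on $\Sigma$ and $\lambda,\mu$ vanish there, and $Q(\rho)=0$ since $Q\geq0$ has a minimum), we get $H_p(\rho)=0$ trivially, so one must track the bicharacteristic to second order.

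The key step is to introduce the function $t\mapsto \Lambda(\ga(s))$ along the bicharacteristic and exploit \eqref{eq:LAMQbis} together with \eqref{eq:LaMQ}. The idea, following \cite{zbMATH03625907}, is to show that $Q(\ga(s))$ must vanish identically (or to infinite order) near $s=0$: from $\frac{d}{ds}Q(\ga)=\{p,Q\}(\ga)=-\{\Lambda M,Q\}(\ga)=-M\{\Lambda,Q\}(\ga)-\Lambda\{M,Q\}(\ga)$ and the bound $|\{\Lambda,Q\}|\le CQ$, one derives a differential inequality of Gronwall type for $Q(\ga(s))$ once one controls $M(\ga)$ and $\Lambda(\ga)$; since $\Lambda(\ga)M(\ga)=Q(\ga)$, the quantities $\Lambda(\ga),M(\ga)$ are themselves controlled by $\sqrt{Q(\ga)}$ up to the size of $\Lambda-M$. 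Feeding \eqref{eq:LAMQbis} into the equation for $\frac{d}{ds}(\Lambda-M)(\ga)=\{p,\Lambda-M\}(\ga)$ shows that $|\Lambda-M|(\ga(s))$ and $\sqrt{Q(\ga(s))}$ satisfy a coupled system forcing both to vanish to infinite order at $s=0$ if the bicharacteristic is tangent to $\Sigma=\{\Lambda=M=Q=0\}$ — equivalently, tangency forces the bicharacteristic to stay in $\Sigma$, but bicharacteristics of $p$ that lie in $\Sigma$ are trivial (constant), contradicting that $\ga$ is a genuine tangent bicharacteristic that leaves $\Sigma$.

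The main obstacle I expect is making precise the notion of "tangent bicharacteristic" in a way compatible with the ODE estimates: one must show that tangency at $\rho$ (say $\Lambda(\ga),M(\ga),Q(\ga),(\Lambda-M)(\ga)$ all $O(s^2)$, as in the conclusion of Proposition \ref{pro:tang_bicha:main}) propagates via the Gronwall argument into vanishing to all orders, and then invoke that a bicharacteristic contained in $\Sigma$ to infinite order must actually be contained in $\Sigma$ — for which one uses that $\Sigma$ is defined by the vanishing of the $\phi_j$ and that the Hamilton flow preserves $\Sigma$ only at the trivial (stationary) orbits under the standing hypotheses. An alternative, cleaner route — and the one I would actually pursue — is to show directly that along any bicharacteristic the nonnegative quantity $E(s)=\|\ldots\|$ built from $\Lambda(\ga)^2+Q(\ga)$ satisfies $|E'|\le C E$, hence $E(s)\le E(0)e^{C|s|}$; tangency gives $E(0)=0$ only in the limit, so one instead runs the inequality backward from a point on $\ga$ strictly inside $\Sigma^c$ to reach a contradiction with $E$ being forced to stay zero. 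This Gronwall/uniqueness core is exactly the argument in \cite{zbMATH03625907}, and citing it for the technical propagation estimate while reproducing the structure above is, I think, the intended level of detail.
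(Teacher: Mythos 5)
The paper does not prove Lemma~\ref{lem:fact:bicha}; it only cites \cite{zbMATH03625907}, so there is no in-paper argument to compare against verbatim. Evaluating your proposal on its own merits, the general direction (argue by contradiction via a Gronwall-type estimate built from \eqref{eq:LaMQ}--\eqref{eq:LAMQbis}) is reasonable, but as written the argument has a genuine gap that the proposal itself half-acknowledges and then does not close: the parametrization issue.

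Along the constructed tangent bicharacteristic the point $\rho$ is reached only as the Hamilton parameter $s\to\infty$ (this is exactly the change of variable $t=s^{-1}$ in the paper's Section~\ref{tanbich}), and all the quantities $\Lambda,M,Q,(\Lambda-M)$ decay only polynomially in $s$. A two-sided Gronwall inequality $|E'(s)|\le C E(s)$ for $E=\Lambda^2+Q$ only yields that $E$ cannot decay faster than $e^{-Cs}$, which is perfectly compatible with polynomial decay; ``running the inequality backward from a point in $\Sigma^c$'' then gives $E(s)>0$ for all finite $s$, which is also what the tangent bicharacteristic does, so there is no contradiction. (In addition, the inequality $|E'|\le CE$ for $E=\Lambda^2+Q$ is itself not immediate: the term $M\{\Lambda,Q\}$ in $Q'$ is only controlled via $|M|\le |\Lambda-M|+\sqrt Q$, which is not bounded by $\sqrt E$, so one really needs $E$ to contain $(\Lambda-M)^2$; but that still does not fix the parametrization issue.)

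What is missing is the crucial reparametrization by $x_0$. Along a null bicharacteristic one has $\Lambda M=Q\ge0$, so $\Lambda$ and $M$ share a sign, and $\dot x_0 = \partial_{\xi_0}p=-(\Lambda+M)$ satisfies $(\Lambda+M)^2=(\Lambda-M)^2+4Q>0$ off $\Sigma$. Hence off $\Sigma$ one may use $x_0$ as parameter, and $\rho$ is reached at the \emph{finite} value $x_0=0$. Working in this parameter one should run Gronwall not on $\Lambda^2+Q$ but on $\Lambda$ alone, exploiting the asymmetry of Definition~\ref{dfn:one}: the hypothesis is $|\{\Lambda,Q\}|\le CQ$ (not $\{M,Q\}$). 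Indeed
\[
\frac{d\Lambda}{dx_0}=-\frac{\Lambda\{\Lambda,M\}-\{\Lambda,Q\}}{\Lambda+M},
\]
and assuming WLOG $\Lambda,M\ge0$ near $\rho$ one checks, using $\Lambda M=Q$, $\sqrt Q\le(\Lambda+M)/2$ and both conditions \eqref{eq:LaMQ}, \eqref{eq:LAMQbis}, that
\[
\Bigl|\frac{\Lambda\{\Lambda,M\}}{\Lambda+M}\Bigr|\le C\Lambda,\qquad
\Bigl|\frac{\{\Lambda,Q\}}{\Lambda+M}\Bigr|\le C\frac{\Lambda M}{\Lambda+M}\le C\Lambda,
\]
whence $|d\Lambda/dx_0|\le C'\Lambda$. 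Gronwall backward on the finite interval $[0,x_0^{(1)}]$ then yields $\Lambda(0)\ge\Lambda(x_0^{(1)})e^{-C'x_0^{(1)}}>0$, contradicting $\Lambda(\rho)=0$ (up to the degenerate case $\Lambda\equiv0$, which forces $Q\equiv0$ and reduces $\gamma$ to a simple bicharacteristic of $\Lambda$, to be handled separately). This is the mechanism your proposal needs and does not supply: the reparametrization makes the contradiction occur at a finite parameter value, and the asymmetric bracket bound on $\{\Lambda,Q\}$ is what makes Gronwall on $\Lambda$ (rather than on a symmetric energy) close.

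A smaller but real imprecision in the proposal: the statement ``bicharacteristics of $p$ that lie in $\Sigma$ are trivial (constant)'' is a true fact about $H_p$ on $\Sigma$, but you use it as though the Gronwall estimate forces the curve \emph{into} $\Sigma$ at some finite $s$; as explained above it does not, which is why the argument has to be restructured around $x_0$ rather than $s$.
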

In this section, we restrict ourselves to the case 
\[
{\rm Spec}F_p(\rho)\subset i\R\;\;\big(\Longleftrightarrow \Pi \la_j(\rho)\geq 0\Longleftrightarrow \theta|_{\Sigma}\geq 0\big)\quad\rho\in \Sigma.
\]
Applying the extension lemma to $\theta\big|_{\Sigma}$ one can assume from the beginning that
\[
p=-(\xi_0+\phi_1)(\xi_0-\phi_1)+\theta\phi_1^2+\Sigma_{j=2}^r\phi_j^2+\Sigma_{j=r+1}^d\phi_j^2+R
\]
where $R=O(|\phi|^4\xig^{-2})=O^4(\Sigma')$ with $\phi=(\phi_1,\ldots,\phi_d)$ and that
\begin{equation}
\label{eq:ph:to:te}
\theta\geq 0,\quad \{\phi_j,\theta\}\sieq 0,\quad j=1,\ldots,r.
\end{equation}
\begin{prop}
\label{pro:kihon:bunkai}Assume that $p$ is of normal form up to term $O^4(\Sigma')$ and satisfies $\theta\geq 0$ near $\bro$. If
\begin{gather*}
|\{\xi_0-\phi_1,\theta\}|\leq C(\sqrt{\theta}+|\phi_1|+\sqrt{|\phi'|})^2,\\
 |\{\{\xi_0-\phi_1,\phi_2\},\phi_2\}|\leq C(\sqrt{\theta}+|\phi_1|+\sqrt{|\phi'|}),\;\;\phi'=(\phi_2,\ldots,\phi_d)
\end{gather*}
holds then $p$ admits a microlocal elementary factorization at $\bro$.
\end{prop}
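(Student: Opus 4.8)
The plan is to construct the factorization explicitly from the normal form. Since $p=-(\xi_0+\phi_1)(\xi_0-\phi_1)+\theta\phi_1^2+\Sigma_{j=2}^r\phi_j^2+\Sigma_{j=r+1}^d\phi_j^2+O^4(\Sigma')$, the natural candidate is to set $\Lambda=\xi_0-\phi_1$ (so $\lambda=\phi_1$) and $M=\xi_0+\phi_1$ (so $\mu=-\phi_1$), with $Q=\theta\phi_1^2+\Sigma_{j=2}^r\phi_j^2+\Sigma_{j=r+1}^d\phi_j^2+R$, which is $\geq 0$ near $\bro$ because $\theta\geq 0$ and $R=O^4(\Sigma')$ is dominated by the positive quadratic part. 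First I would verify $Q\in S^2_{1,0}$, $Q\geq 0$, and $\Lambda,M\in S^1_{1,0}$ real, which is immediate. Then $\Lambda-M=-2\phi_1$, so $|\Lambda-M|\approx|\phi_1|$, and $\sqrt{Q}\gtrsim \sqrt{\theta}|\phi_1|+|\phi'|$ where $\phi'=(\phi_2,\ldots,\phi_d)$; the quantity $\sqrt{Q}+|\Lambda-M|$ is comparable to $\sqrt{\theta}|\phi_1|+|\phi_1|+|\phi'|$, and since $|\phi_1|\leq |\phi_1|$ trivially this is $\gtrsim |\phi_1|+|\phi'|$.

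Next I would check \eqref{eq:LAMQbis}. Compute $\{\Lambda,M\}=\{\xi_0-\phi_1,\xi_0+\phi_1\}=2\{\xi_0,\phi_1\}-\{\phi_1,\phi_1\}=2\{\xi_0,\phi_1\}$; but actually it is cleaner to write $\{\Lambda,M\}=\{\xi_0-\phi_1,\xi_0+\phi_1\}$ and use \eqref{eq:form:1}, namely $\{\xi_0-\phi_1,\phi_j\}\sieq 0$ for all $j$, so $\{\xi_0-\phi_1,\phi_1\}=O(\Sigma')=\Sigma_{k=1}^d c_k\phi_k$, giving $\{\Lambda,M\}=2\{\xi_0-\phi_1,\phi_1\}=O(\Sigma')$. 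Hence $|\{\Lambda,M\}|\lesssim |\phi_1|+|\phi'|\lesssim \sqrt{Q}+|\Lambda-M|$, and \eqref{eq:LAMQbis} holds with no hypothesis needed beyond the normal form — the two displayed inequalities in the statement are really needed for the subtler estimate \eqref{eq:LaMQ} on $\{\Lambda,Q\}$ and for controlling the commutator terms to the right order. I would double-check here whether the hypothesis $|\{\{\xi_0-\phi_1,\phi_2\},\phi_2\}|\leq C(\sqrt\theta+|\phi_1|+\sqrt{|\phi'|})$ enters \eqref{eq:LAMQbis} or \eqref{eq:LaMQ}; I expect it enters the $Q$-bracket since $Q$ contains $\phi_2^2$ and $\{\Lambda,\phi_2^2\}=2\phi_2\{\Lambda,\phi_2\}$ with $\{\Lambda,\phi_2\}=O(\Sigma')$ but we need to expand one more order, where the second derivative $\{\{\xi_0-\phi_1,\phi_2\},\phi_2\}$ appears as the leading Taylor coefficient.

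The main work is \eqref{eq:LaMQ}: $|\{\Lambda,Q\}|\leq CQ$. Expand $\{\Lambda,Q\}=\{\Lambda,\theta\}\phi_1^2+\theta\{\Lambda,\phi_1^2\}+\Sigma_{j=2}^r\{\Lambda,\phi_j^2\}+\Sigma_{j=r+1}^d\{\Lambda,\phi_j^2\}+\{\Lambda,R\}$. For the $\theta\phi_1^2$ term: $\{\Lambda,\theta\}\phi_1^2$ is controlled by the first hypothesis $|\{\xi_0-\phi_1,\theta\}|\leq C(\sqrt\theta+|\phi_1|+\sqrt{|\phi'|})^2$ times $\phi_1^2$, and one shows $(\sqrt\theta+|\phi_1|+\sqrt{|\phi'|})^2\phi_1^2\lesssim \theta\phi_1^2+\phi_1^4+|\phi'|\phi_1^2\lesssim Q+\phi_1^2\cdot\text{(small)}\cdot$, where I must be careful that $\phi_1^4$ and $|\phi'|\phi_1^2$ are absorbed — $\phi_1^4=O^4(\Sigma')$ is of the same order as $R$ and hence $\lesssim Q$ near $\bro$ after possibly shrinking, and $|\phi'|\phi_1^2\lesssim (|\phi'|^2+\phi_1^4)\lesssim Q$; the term $\theta\{\Lambda,\phi_1^2\}=2\theta\phi_1\{\Lambda,\phi_1\}=O(\Sigma')\cdot\theta\phi_1\lesssim \theta\phi_1^2+\theta|\phi'||\phi_1|\lesssim Q$. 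For $\Sigma_{j=2}^r\{\Lambda,\phi_j^2\}=2\Sigma_{j=2}^r\phi_j\{\Lambda,\phi_j\}$ with $\{\Lambda,\phi_j\}=O(\Sigma')$, the delicate one is $j=2$: write $\{\Lambda,\phi_2\}=\{\Lambda,\phi_2\}(\bro)+\ldots=O(\Sigma')$ and then $\phi_2\{\Lambda,\phi_2\}$; here the second hypothesis on $\{\{\xi_0-\phi_1,\phi_2\},\phi_2\}$ is used, because along the direction where only $\phi_2$ is nonzero, $\{\Lambda,\phi_2\}\approx \{\{\Lambda,\phi_2\},\phi_2\}\cdot(\text{coordinate dual to }\phi_2)$, and one gets $|\phi_2\{\Lambda,\phi_2\}|\lesssim (\sqrt\theta+|\phi_1|+\sqrt{|\phi'|})|\phi_2|^2\lesssim \sqrt{Q}\cdot|\phi'|\lesssim Q$ after shrinking, using $\sqrt{Q}\lesssim 1$. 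For $j\geq r+1$, $\{\Lambda,\phi_j\}=O(\Sigma')$ by \eqref{eq:form:1} gives $\phi_j\{\Lambda,\phi_j\}\lesssim |\phi_j|(|\phi_1|+|\phi'|)\lesssim Q$. Finally $\{\Lambda,R\}=O^3(\Sigma')\lesssim Q$ near $\bro$. I expect the \textbf{main obstacle} to be the bookkeeping in the $\phi_2^2$ term: one must genuinely Taylor-expand $\{\Lambda,\phi_2\}$ to second order around $\Sigma'$, identify the surviving coefficient as $\{\{\xi_0-\phi_1,\phi_2\},\phi_2\}$ (up to the $\{\phi_1,\phi_2\}$ normalization appearing in $\kappa$), and check the bound survives multiplication by $\phi_2$ and division by $Q$; the remaining terms are routine absorption of $O^4(\Sigma')$ quantities into $Q$ after shrinking the neighborhood, together with the elementary inequality $ab\leq (a^2+b^2)/2$. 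Once \eqref{eq:LaMQ} and \eqref{eq:LAMQbis} are established, Definition \ref{dfn:one} is satisfied and the proposition follows; I would also remark that this is consistent with Lemma \ref{lem:fact:bicha} and the earlier tangent-bicharacteristic results, since the hypotheses here are precisely the negations (in the appropriate quantitative sense) of $\kappa^2-4\nu>0$ when $\nu\leq 0$.
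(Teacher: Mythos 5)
Your choice of the naive factorization $\Lambda=\xi_0-\phi_1$, $M=\xi_0+\phi_1$, $Q=\theta\phi_1^2+\sum_{j\geq 2}\phi_j^2+R$ does not work, for two related reasons, and repairing both is exactly what drives the paper's more elaborate construction.

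First, the $Q$ you propose need not be nonnegative, whereas Definition~\ref{dfn:one} requires $Q\geq 0$. The remainder $R=O^4(\Sigma')$ is of size $O(|\phi|^4\xig^{-2})$ with no sign, so on the set where $\theta=0$ and $\phi'=0$ but $\phi_1\neq 0$ you have $Q=R$, which can be negative: the positive quadratic part $\theta\phi_1^2+|\phi'|^2$ vanishes identically there and there is nothing of size $\phi_1^4\xig^{-2}$ in your $Q$ to absorb $R$. Your remark that ``$\phi_1^4$ is $\lesssim Q$ near $\bro$'' is circular --- this is exactly what has to be arranged, and it is false with your $Q$. The paper cures this by replacing $\phi_1$ with $\La=\phi_1+\ell(\phi'')\phi_1-\la\phi_1^3\xig^{-2}$ for a large parameter $\la>0$: the cubic correction injects $2\la\phi_1^4\xig^{-2}(1+\cdots)$ into $Q$, whence $Q\geq c(\theta\phi_1^2+|\phi'|^2+\phi_1^4\xig^{-2})$, and it is this lower bound that all the subsequent absorption arguments use. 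In particular one gets only $|\phi_1|\lesssim Q^{1/4}\xig^{1/2}$ (not $\lesssim\sqrt Q$), which you should keep track of throughout.

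Second, and more critically, $\{\xi_0-\phi_1,Q\}$ is not $O(Q)$ with your choice. Writing $\{\xi_0-\phi_1,\phi_j\}=\sum_k\al_{jk}\phi_k$, the bracket $\{\xi_0-\phi_1,\sum_{j\geq 2}\phi_j^2\}$ produces the cross terms $2\phi_j\al_{j1}\phi_1$. For $j=2$ this is exactly where the second hypothesis enters, since $\al_{21}\sieq\{\{\xi_0-\phi_1,\phi_2\},\phi_2\}/\{\phi_1,\phi_2\}$, and the factor $(\sqrt\theta+|\phi_1|+\sqrt{|\phi'|})$ combines with $|\phi_2\phi_1|$ to give $O(Q)$; so your intuition about where the hypothesis acts is right. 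For $j\geq r+1$ one needs the extra fact (from Jacobi's identity, as the paper notes) that $\al_{j1}\sieq 0$; your appeal to \eqref{eq:form:1} alone gives only $|\phi_j(|\phi_1|+|\phi'|)|$, and since $|\phi_1|$ is only $O(Q^{1/4}\xig^{1/2})$ the estimate $|\phi_j\phi_1|\lesssim Q$ fails. The genuinely uncontrolled terms, which you do not address, are those with $3\leq j\leq r$: the coefficients $\al_{j1}$ there are generically nonzero constants, not constrained by either hypothesis of the proposition, and $|\phi_1\phi_j|\lesssim Q^{3/4}\xig^{1/2}$ is not $\lesssim Q$ near $\Sigma$. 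This is exactly what the correction $\ell(\phi'')\phi_1$ with $\ell=\sum_{j=3}^r\be_j\phi_j$ is designed for: the paper chooses $\be$ to solve $\sum_{k=3}^r\{\phi_k,\phi_j\}\be_k=\al_{j1}$ for $3\leq j\leq r$, which is possible because $(\{\phi_i,\phi_j\})_{3\leq i,j\leq r}$ is nonsingular by \eqref{eq:form:2}; this kills the offending $\phi_1\phi_j$ cross terms, and the skew-symmetry of the same matrix also eliminates the resulting $\phi_1^3$ contribution. Without this cancellation mechanism the estimate $\{\Lambda,Q\}=O(Q)$ simply fails. Your computation of $\{\Lambda,M\}=2\{\xi_0-\phi_1,\phi_1\}\sieq 0$ and the resulting bound \eqref{eq:LAMQbis} is fine, but the two gaps above cannot be patched without modifying $\Lambda$ as the paper does.
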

\begin{cor}
\label{cor:kihon:bunkai}Assume that $p$ is of normal form up to term $O^4(\Sigma')$ and satisfies $\theta\geq 0$ near $\bro$ and $\{\xi_0-\phi_1,\theta\}\sieq 0$. If $\{\{\xi_0-\phi_1,\phi_2\},\phi_2\}\sieq 0$ 
then $p$ admits a microlocal elementary factorization at $\bro$ while $p$ does not if $\{\{\xi_0-\phi_1,\phi_2\},\phi_2\}(\bro)\neq 0$.
\end{cor}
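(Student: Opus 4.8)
The plan is to deduce both assertions directly from Proposition \ref{pro:kihon:bunkai} and Lemma \ref{lem:fact:bicha}, together with the tangent-bicharacteristic results of Section \ref{tanbich}. For the positive part, I would observe that the hypotheses $\{\xi_0-\phi_1,\theta\}\sieq 0$ and $\{\{\xi_0-\phi_1,\phi_2\},\phi_2\}\sieq 0$ are exactly the equalities whose $O(\Sigma')$-versions imply the two Hörmander-type inequalities required in Proposition \ref{pro:kihon:bunkai}. Indeed, a smooth function vanishing on $\Sigma'=\{\phi_1=\cdots=\phi_d=0\}$ is bounded by $C\sum_{j=1}^d|\phi_j|\le C'(\sqrt\theta+|\phi_1|+\sqrt{|\phi'|})^2$ near $\bro$ once one also uses $\theta\ge 0$; likewise $\{\xi_0-\phi_1,\theta\}\sieq 0$ gives the first estimate and $\{\{\xi_0-\phi_1,\phi_2\},\phi_2\}\sieq 0$ gives the second (the second estimate is in fact the weaker one, with a single power on the right). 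Hence the hypotheses of Proposition \ref{pro:kihon:bunkai} are met and $p$ admits a microlocal elementary factorization at $\bro$.

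For the negative part — that $p$ admits \emph{no} microlocal elementary factorization at $\bro$ when $\{\{\xi_0-\phi_1,\phi_2\},\phi_2\}(\bro)\neq 0$ — I would argue by contradiction using Lemma \ref{lem:fact:bicha}: an elementary factorization precludes any bicharacteristic of $p$ tangent to $\Sigma$ at $\bro$. So it suffices to exhibit such a tangent bicharacteristic. Under the standing assumptions of the corollary we have $\theta|_\Sigma\ge 0$, $\theta(\bro)=0$, and $\{\xi_0-\phi_1,\theta\}\sieq 0$; in particular $\{\xi_0-\phi_1,\{\xi_0-\phi_1,\theta\}\}(\bro)=0$, i.e. $\nu=0$. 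Combined with $\kappa=\{\{\xi_0-\phi_1,\phi_2\},\phi_2\}(\bro)/\{\phi_1,\phi_2\}(\bro)\neq 0$ (the denominator being nonzero since $p$ is of normal form), this is precisely the situation of Corollary \ref{cor:te:teiti}: $\theta|_\Sigma$ is one-signed near $\bro$, $\nu=0$, and $\{\{\xi_0-\phi_1,\phi_2\},\phi_2\}(\bro)\neq 0$. Therefore Corollary \ref{cor:te:teiti} produces a bicharacteristic of $p$ tangent to $\Sigma$ at $\bro$, contradicting Lemma \ref{lem:fact:bicha}.

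The main point to be careful about is the passage from the relations $\sieq 0$ to the pointwise inequalities with the stated right-hand sides: one must check that the implied constants are uniform in a full conic neighborhood of $\bro$ (not merely on $\Sigma'$), which is where the extension lemma and the normalization $R=O^4(\Sigma')$ enter, and that $\theta\ge 0$ is genuinely used to absorb the $\sqrt\theta$ contribution into the quadratic expression. A secondary subtlety is that Corollary \ref{cor:te:teiti} is stated for $p$ of normal form up to $O^4(\Sigma')$ with $\theta(\bro)=0$, so one should note at the outset that the normalization in the present section (with $\theta\ge 0$, $\{\phi_j,\theta\}\sieq 0$) is a special case of that hypothesis, and that $\{\xi_0-\phi_1,\theta\}\sieq 0$ indeed forces $\{\xi_0-\phi_1,\{\xi_0-\phi_1,\theta\}\}(\bro)=0$. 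Everything else is bookkeeping; no new estimate beyond those already established in Propositions \ref{pro:tang_bicha:main}, \ref{pro:kihon:bunkai} and their corollaries is needed.
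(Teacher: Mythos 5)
Your treatment of the negative half of the corollary is correct and matches the paper: from $\{\xi_0-\phi_1,\theta\}\sieq 0$ and the tangency of $H_{\xi_0-\phi_1}$ to $\Sigma'$ one gets $\nu=\{\xi_0-\phi_1,\{\xi_0-\phi_1,\theta\}\}(\bro)=0$, and with $\theta|_\Sigma\geq 0$, $\theta(\bro)=0$ and $\kappa\neq 0$ Corollary~\ref{cor:te:teiti} produces a tangent bicharacteristic, contradicting Lemma~\ref{lem:fact:bicha}. The second inequality of Proposition~\ref{pro:kihon:bunkai} is also handled correctly: its right-hand side carries a single power, so $\sum_k|\phi_k|\lesssim|\phi_1|+\sqrt{|\phi'|}$ does the job.

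The positive half has a genuine gap. You claim that $\{\xi_0-\phi_1,\theta\}\sieq 0$, i.e.\ $\{\xi_0-\phi_1,\theta\}=\sum_{k=1}^d c_k\phi_k$, gives
\[
|\{\xi_0-\phi_1,\theta\}|\lesssim \sum_{j=1}^d|\phi_j|\lesssim \big(\sqrt\theta+|\phi_1|+\sqrt{|\phi'|}\big)^{2},
\]
but the last step is false: the contribution $c_1\phi_1$ is of \emph{first} order in $\phi_1$, while the right-hand side only controls $|\phi_1|^2$ (take $\phi'=0$, $\theta=0$ and the inequality would read $|\phi_1|\lesssim|\phi_1|^2$). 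So the membership $\sieq 0$ alone does not put $\{\xi_0-\phi_1,\theta\}$ in $O(Q)$; the term $c_1\phi_1$ must be shown to have a small coefficient. This is precisely where the paper's actual argument deviates from yours and is not mere bookkeeping: one applies $\{\phi_2,\cdot\}$ to the identity $\{\xi_0-\phi_1,\theta\}=\sum_k c_k\phi_k$, uses Jacobi's identity to rewrite $\{\phi_2,\{\xi_0-\phi_1,\theta\}\}$ via $\{\xi_0-\phi_1,\{\phi_2,\theta\}\}$ (which is $\sieq 0$ by the normalization \eqref{eq:ph:to:te}) and $\{\theta,\{\xi_0-\phi_1,\phi_2\}\}$ (which is $O(\sqrt\theta)$ by Glaeser's inequality since $\theta\geq 0$), and then exploits $\{\phi_1,\phi_2\}(\bro)\neq 0$ together with $\{\phi_2,\phi_k\}\sieq 0$ for $k\geq 3$ to extract the bound $|c_1|\lesssim|\phi|+\sqrt\theta$. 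Only after this does one obtain $|\{\xi_0-\phi_1,\theta\}|\lesssim(|\phi|+\sqrt\theta)|\phi_1|+|\phi'|\lesssim(\sqrt\theta+|\phi_1|+\sqrt{|\phi'|})^2$ and hence the first hypothesis of Proposition~\ref{pro:kihon:bunkai}. You should incorporate this Jacobi/Glaeser step; without it the reduction to Proposition~\ref{pro:kihon:bunkai} does not go through.
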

\begin{proof}From the assumption one can write $
\{\xi_0-\phi_1,\theta\}=\Sigma_{k=1}^dc_d\phi_k$. 
Note that $\{\xi_0-\phi_1,\{\phi_2,\theta\}\}\sieq 0$ and $|\{\theta,\{\xi_0-\phi_1,\phi_2\}\}|\lesssim \sqrt{\theta}$
 for $\theta\geq 0$. It follows from Jacobi's identity we have 
 \[
 |\{\phi_2,\{\xi_0-\phi_1,\theta\}\}|\lesssim |\phi|+\sqrt{\theta},\quad \phi=(\phi_1,\ldots,\phi_d).
 \]
 This shows that $|\{\xi_0-\phi_1,\theta\}|\lesssim (|\phi|+\sqrt{\theta})|\phi_1|+|\phi'|$ hence the first condition is satisfied. The second condition is obviously satisfied and the assertion follows from Proposition \ref{pro:kihon:bunkai}. Since $\{\xi_0-\phi_1,\{\xi_0-\phi_1,\theta\}\}\sieq 0$ for $\{\xi_0-\phi_1,\theta\}\sieq 0$ if $\{\{\xi_0-\phi_1,\phi_2\},\phi_2\}(\bro)\neq 0$ there is a bicharacteristic tangent to $\Sigma$ at $\bro$ by Corollary \ref{cor:te:teiti} hence Lemma \ref{lem:fact:bicha} proves the last assertion.
 \end{proof}
Let
\[
\La=\phi_1+\ell(\phi'')\phi_1-\la\phi_1^3\xig^{-2},\quad \ell(\phi'')=\Sigma_{j=3}^r\be_j\phi_j,\quad\phi''=(\phi_3,\ldots,\phi_r)
\]
where $\la>0$ is a parameter and $\be_j$ are smooth which are determined later. Write
\[
p=-(\xi_0+\La)(\xi_0-\La)+Q
\]
where $
Q=\theta\phi_1^2+\tilde Q$ 
with
\begin{gather*}
\tilde Q=\Sigma_{j=2}^d\phi_j^2-2\ell\phi_1^2(1+\ell/2)+2\la\phi_1^4\xig^{-2}(1+\ell-\la\phi_1^2\xig^{-2}/2)+R.
\end{gather*}
Taking $\la>0$ large (since we are working in a neighborhood or $\bro$ it can be assumed that $|\phi|\xig^{-1}$ is arbitrarily small) it is clear that there is $c>0$ such that
\[
 Q\geq c\,(|\phi'|^2+\theta\phi_1^2+\phi_1^4\xig^{-2}),\quad \phi'=(\phi_2,\ldots,\phi_d).
\]
Note that
\begin{gather*}
\{\xi_0-\La, \tilde Q\}=\{\xi_0-\phi_1, |\phi'|^2-2\ell(\phi'') \phi^2_1(1+\ell(\phi'')/2)\}\\
-\{\ell(\phi'')\phi_1,|\phi'|^2\}+O(Q).
\end{gather*}
Recall that one can write
\[
\{\xi_0-\phi_1,\phi_j\}=\Sigma_{k=1}^d\al_{jk}\phi_k,\quad 1\leq j\leq d.
\]
Moreover since $\{\phi_2,\{\xi_0-\phi_1,\phi_j\}\}\sieq 0$ for $r+1\leq j\leq d$ which follows from Jacobi's identity, one has
\[
\al_{j1}\sieq 0,\quad r+1\leq j\leq d.
\]
Therefore we have
\begin{gather*}
\{\xi_0-\La, \tilde Q\}=2\sum_{j=2}^d\phi_j\sum_{k=1}^d\al_{jk}\phi_k-2\phi_1^2\sum_{j=3}^r\be_j\sum_{k=1}^d\al_{jk}\phi_k(1+\ell(\phi'')/2)\\
-2\phi_1\sum_{j=2}^d\phi_j\sum_{k=3}^r\be_k\{\phi_k,\phi_j\}+O(Q).
\end{gather*}
Note that
\begin{gather*}
\phi_j\sum_{k=1}^d\al_{jk}\phi_k=O(Q),\;\;r+1\leq j\leq d,\quad
\phi_1^2\sum_{j=3}^r\be_j\sum_{k=1}^d\al_{jk}\phi_k\ell(\phi'')=O(Q),\\
\phi_1\phi_j\sum_{k=3}^r\be_k\{\phi_k,\phi_j\}=O(Q),\;\;r+1\leq j\leq d
\end{gather*}
we have
\begin{gather*}
\{\xi_0-\La, \tilde Q\}=2\sum_{j=2}^r\phi_j\sum_{k=1}^d\al_{jk}\phi_k-2\phi_1^2\sum_{j=3}^r\be_j\sum_{k=1}^d\al_{jk}\phi_k\\
-2\phi_1\sum_{j=2}^r\phi_j\sum_{k=3}^r\be_k\{\phi_k,\phi_j\}+O(Q)\\
=2\phi_1\sum_{j=2}^r\al_{j1}\phi_j-2\phi_1^3\sum_{j=3}^r\be_j\al_{j1}
-2\phi_1\sum_{j=3}^r\phi_j\sum_{k=3}^r\be_k\{\phi_k,\phi_j\}+O(Q)\\
=2\phi_1\sum_{j=3}^r\phi_j\Big(\al_{j1}-\sum_{k=3}^r\{\phi_k,\phi_j\}\be_k\Big)+2\al_{21}\phi_1\phi_2-2\phi_1^3\sum_{j=3}^r\be_j\al_{j1}+O(Q)
\end{gather*}
because $\{\phi_k,\phi_2\}\sieq 0$ for $k\geq 3$. Choose $\be_k$ such that
\[
\sum_{k=3}^r\{\phi_k,\phi_j\}\be_k=\al_{j1},\quad 3\leq j\leq r.
\]
Since $(\{\phi_k,\phi_j\})_{3\leq k, j\leq r}$ is skew symmetric and nonsingular we have
\[
\sum_{j=3}^r\be_j\al_{j1}=0.
\]
Therefore we conclude
\[
\{\xi_0-\La, \tilde Q\}=2\al_{21}\phi_1\phi_2+O(Q).
\]
Since $\al_{21}\sieq \{\{\xi_0-\phi_1,\phi_2\}, \phi_2\}/\{\phi_1,\phi_2\}$ we have
\[
|\{\{\xi_0-\phi_1,\phi_2\},\phi_2\}|\leq C(\sqrt{\theta}+|\phi_1|+\sqrt{|\phi'|})\Longrightarrow \{\xi_0-\La, \tilde Q\}=O(Q).
\]
It remains to study $\{\xi_0-\La, \theta\phi_1^2\}$. Taking \eqref{eq:ph:to:te} into account we have
\begin{gather*}
\{\xi_0-\La, \theta\phi_1^2\}=\{\xi_0-\phi_1, \theta\phi_1^2\}+O(Q)\\
=2\theta\phi_1\{\xi_0-\phi_1,\phi_1\}+\phi_1^2\{\xi_0-\phi_1,\theta\}+O(Q)
=\phi_1^2\{\xi_0-\phi_1,\theta\}+O(Q).
\end{gather*}
Therefore we conclude that
\[
|\{\xi_0-\phi_1,\theta\}|\leq C(\theta+\sqrt{\theta}|\phi_1|+|\phi'|)\Longrightarrow \{\xi_0-\La, \theta\phi_1^2\}=O(Q).
\]
Consider $\{\xi_0-\La,\xi_0+\La\}=2\{\xi_0,\La\}$ which is
\[
2\{\xi_0,\phi_1+\ell(\phi'')\phi_1-\la\phi_1^3\xig^{-2}\}\sieq 0
\]
for $\{\xi_0,\phi_1\}\sieq 0$. Since $|\phi'|\lesssim \sqrt{Q}$ and $|\phi_1|\lesssim |\La|$ we have
\[
|\{\xi_0-\La,\xi_0+\La\}|\lesssim \sqrt{Q}+|\La|
\]
which completes the proof.
\qed

\medskip

Reconsider the example \eqref{eq:rei:2} with $k\geq 2$. If $\theta=c\,x_2^l$ with $c>0$ and even $l$ it is easy to check that
\[
|\{\xi_0-\varphi_1,\theta\}|\lesssim \theta
\]
hence in view of Proposition \ref{pro:kihon:bunkai} $p$ admits a microlocal elementary factorization at $\bro$. 

Next, reconsider $p$ in \eqref{eq:rei:3} with $k\geq 2$ so that $\{\{\xi_0-\varphi_1,\varphi_2\},\varphi_2\}|_{\Sigma}=0$. If we take $\nu(x)= x_2^l$ with even $l$ it is clear $\{\xi_0-\varphi_1,\theta+\nu\}\equiv 0$ and $(\theta+\nu)|_{\Sigma}\geq 0$ hence it follows from Corollary \ref{cor:kihon:bunkai} that $p$ admits a microlocal elementary factorization at $\bro$. In particular, there is no tangent bicharacteristic. 


\bibliographystyle{plain}
\bibliography{bibTrans}

\end{document}